\documentclass[reqno,11pt]{amsart}
\usepackage{amsmath,amsfonts,mathrsfs,amssymb,graphicx,color}
\def\tr{\mathop{\rm tr}\nolimits}

\newcommand{\Z}{{\mathbb Z}}
\newcommand{\R}{{\mathbb R}}
\newcommand{\A}{{\mathcal A}}
\newcommand{\B}{{\mathcal B}}
\newcommand{\Q}{{\mathbb Q}}

\newcommand{\F}{{{\mathcal F}}}
 \newcommand{\N}{{\mathbb N}}
 \newcommand{\NN}{{\mathcal N}}

 \allowdisplaybreaks[4]
\pagestyle{plain}
\parindent=1em

\newtheorem{thm}{Theorem}[section]

\newtheorem{prop}[thm]{Proposition}
\newtheorem{rem}[thm]{Remark}
\newtheorem{remark}{Remark}[section]
\newtheorem{lem}[thm]{Lemma}
\newtheorem{defi}[thm]{Definition}

 \numberwithin{equation}{section}

\begin{document}
\title{Exact-dimensional property of  density of states measure  of Sturm Hamiltonian  }

\author{Yanhui QU}
\address[Y.H. QU]{Department of  Mathematical Science, Tsinghua University, Beijing 100084, P. R. China}
\email{yhqu@math.tsinghua.edu.cn}

\begin{abstract}

For frequency $\alpha$ of bounded type and coupling $\lambda>20$, we show that the density of states measure $\NN_{\alpha,\lambda}$ of the related Sturm Hamiltonian is exact upper and lower dimensional, however, in general it is not exact-dimensional.

\end{abstract}

\maketitle

\section{Introduction}

Since the work \cite{BIST},  the Sturm Hamiltonian has been extensively studied as a typical model of quasi-periodic Schr\"odinger operator.  
The Sturm Hamiltonian is a  bounded self-adjoint operator on $\ell^2(\Z),$ defined by 
$$
(H_{\alpha,\lambda,\theta}\psi)_n:=\psi_{n-1}+\psi_{n+1}+\lambda\chi_{[1-\alpha,1)}(n\alpha+\theta\pmod 1) \psi_n,
$$
where $\alpha\in [0,1]\setminus \Q$,
$\lambda>0$  and  $\theta\in[0,1)$. $\alpha, \lambda, \theta$ are
called the  {\it frequency, coupling} and {\it phase}, respectively.  It is well-known that the spectrum and the density of states measure (DOS)  of Sturm Hamiltonian
 are independent of   $\theta$ and   we  denote them   by $\Sigma_{\alpha,\lambda}$ and $\NN_{\alpha,\lambda}$, respectively (see \cite{BIST,CL} for detail).    
  The fractal dimensions of the spectrum have been studied by many authors, see \cite{Da} for a detailed review.   In this paper, we  focus on  the dimensional  properties, especially the exact-dimensional properties   of    $\NN_{\alpha,\lambda}$. Let us recall the related definitions.

   Assume $\alpha\in [0,1]\setminus \Q$ has continued fraction expansion $\alpha=[0;a_1,a_2, \cdots]$ with $a_n\in \N.$ If $\{a_n: n\ge 1\}$ is bounded, $\alpha$ is called of {\it bounded type}. If $a_n=\kappa$ for $n\ge N$, $\alpha$ is called of {\it eventually constant type}. $\alpha_\kappa:=[0;\kappa,\kappa,
\cdots]$ is called of {\it constant type}. The most famous frequency of constant type is the inverse of golden number $\alpha_1=[0;1,1,\cdots]=(\sqrt{5}-1)/2.$ The Sturm Hamiltonian $H_{\alpha_1,\lambda,\theta}$ is called {\it Fibonacci } Hamiltonian.

 Assume finite measure $\mu$ is defined on a compact metric space $X$.    Fix $x\in X$, we define the {\it  upper} and {\it lower } local dimensions of $\mu$ at $x$ as
$$
\overline{d}_\mu(x):=\limsup_{r\to0}\frac{\log \mu(B(x,r))}{\log r}\ \ \ \text{ and }\ \ \ \underline{d}_\mu(x):=\liminf_{r\to0}\frac{\log \mu(B(x,r))}{\log r}.
$$
If  $\overline{d}_\mu(x)=\underline{d}_\mu(x)$, we say that the {\it local  dimension} of   $\mu$ at $x$ exists and denote it by  $d_\mu(x)$. 
The Hausdorff and packing dimensions of $\mu$ are defined as
 $$
  \begin{cases}
  \dim_H\mu:=\sup\{s: \underline{d}_\mu(x)\ge s \text{ for  } \mu \text{ a.e. }x\in X\},\\
  \dim_P\mu:=\sup\{s: \overline{d}_\mu(x)\ge s \text{ for  } \mu \text{ a.e. }x\in X\}.
  \end{cases}
  $$

If there exists a constant $d$ such that $\underline{d}_\mu(x)=d\  (\overline{d}_\mu(x)=d)$ for $\mu$ a.e. $x\in X$, then necessarily $\dim_H\mu=d$ ($\dim_P\mu=d$). In this case we say that  $\mu$ is  {\it  exact lower (upper) dimensional}. If there exists a constant $d$ such that ${d}_\mu(x)=d$ for $\mu$ a.e. $x\in X$, then necessarily $\dim_H\mu=\dim_P\mu=d$. In this case we say that  $\mu$ is {\it exact-dimensional}. 

Fix $\alpha\in [0,1]\setminus \Q$ and $\lambda>0$, write 
\begin{eqnarray*}
d_H(\alpha,\lambda):=\dim_H \NN_{\alpha,\lambda}, & d_P(\alpha,\lambda):=\dim_P \NN_{\alpha,\lambda}.
\end{eqnarray*}

 The most prominent model among the Sturm Hamiltonian is the Fibonacci Hamiltonian, which was  introduced by physicists  to model the quasicrystal  system, see  \cite{KKT,OPRSS}.
  The  dimensional properties of its  DOS   have been studied in many works,  see for example  \cite{R,DG2,DG3,DG4,P}, especially the  recent work \cite{DGY}.   We summarize the results which are related to our paper as follows:  $\NN_{\alpha_1,\lambda}$ is exact-dimensional and
  \begin{equation}\label{dim-Fibonacci}
 d(\alpha_1,\lambda):=d_H(\alpha_1,\lambda)=d_P(\alpha_1,\lambda)=\frac{h_{\nu_\lambda}}{{\rm Lyap}^u\nu_\lambda},
\end{equation}
    where $\nu_\lambda$ is the measure of maximal entropy of the Fibonacci trace map $T_\lambda$ and ${\rm Lyap}^u\nu_\lambda$ is the unstable Lyapunov exponent of $\nu_\lambda.$
    $d(\alpha_1,\lambda)$ is an analytic function of $\lambda$  and     
\begin{equation}\label{dos-Fibo}
\lim_{\lambda\to0} d(\alpha_1,\lambda)=1;\ \ \lim_{\lambda\to\infty} d(\alpha_1,\lambda)\log  \lambda =-\frac{5+\sqrt{5}}{4}\log \alpha_1.
\end{equation}

   Girand \cite{Gi} and  Mei \cite{Mei} considered the frequency $\alpha$ with eventually periodic continued fraction expansion. In both papers they showed that $\lim_{\lambda\to0}d(\alpha,\lambda)=1$ and 
   $\NN_{\alpha,\lambda}$ is exact dimensional for small $\lambda$. This generalizes the results in \cite{DG3}. 
   Munger \cite{Mu}  gave estimation on the optimal H\"older exponent  of $\NN_{\alpha_\kappa,\lambda}$ and obtained   asymptotic formula for it.   For $\lambda>20$,  Qu \cite{Q} obtained the dimension formula  of $\NN_{\alpha_\kappa,\lambda}$ similar with \eqref{dim-Fibonacci}, he also showed that $\NN_{\alpha_\kappa,\lambda}$ is exact-dimensional and obtained similar asymptotic behavior  as \eqref{dos-Fibo}.  We remark that, for all works mentioned  above, the dynamical method is applicable due to the special types of the frequencies.

 Recently,  Jitomirskaya and Zhang \cite{JZ} showed that  if $\beta(\alpha)>0$, then $d_P(\alpha,\lambda)=1$, where $\beta(\alpha):=\limsup_n\frac{\log q_{n+1}}{q_n}$ and $p_n/q_n$ is the $n$-th  continued fraction approximation  of $\alpha$. They also constructed 
 specific  $\alpha$ with $\beta(\alpha)>0$ such that  
  $d_H(\alpha,\lambda)<1$  when  $\lambda>20$. In particular,  $\NN_{\alpha,\lambda}$ is not exact-dimensional for such $\alpha$. We remark that the set $\{\alpha\in [0,1]\setminus\Q: \beta(\alpha)>0\}$ has Hausdorff dimension $0$.
  
  Except Jitomirskaya-Zhang's result, almost nothing is known about the dimensional property of  DOS for general Sturm Hamiltonian.
  The main motivation of this  work is to understand the dimensional property of DOS for general Sturm Hamiltonian. As the first step, we concentrate on the frequencies of bounded type.  Let us write  
  $$
 \mathscr{B}:=\{ \alpha\in [0,1]\setminus \Q: \alpha  \text{ is of bounded type}\}.
$$

 Our main result is as follows.       
   
\begin{thm}\label{main}
(i) Assume  $\alpha\in\mathscr{B}$ and $\lambda>20$, then $\NN_{\alpha,\lambda}$ is   exact upper and lower dimensional.  As a consequence, $\NN_{\alpha,\lambda}$ is exact-dimensional if and only if $d_H(\alpha,\lambda)=d_P(\alpha,\lambda)$.

(ii) There exists $\alpha\in \mathscr{B}$ such that $\NN_{\alpha,\lambda}$ is not exact-dimensional for $\lambda$ large enough.
\end{thm}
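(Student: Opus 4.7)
Both parts of Theorem~\ref{main} should rest on the hierarchical band/canonical-word description of $\Sigma_{\alpha,\lambda}$ and of $\NN_{\alpha,\lambda}$ developed for constant-type frequencies in \cite{Q,Mei,Gi}. Since $\alpha\in\mathscr{B}$ has partial quotients $(a_n)$ drawn from a finite alphabet, the level-$n$ bands of $\Sigma_{\alpha,\lambda}$ should be parametrized by a finite set of combinatorial types depending only on a bounded window of $(a_n)$, and each energy $E\in\Sigma_{\alpha,\lambda}$ then yields a nested sequence of bands $B_n(E)$ and so a symbolic coding of $E$. I would push $\NN_{\alpha,\lambda}$ forward to this coding space; the condition $\lambda>20$ should yield bounded-distortion estimates sufficient to pass between the balls $B(E,r)$ and these spectral bands.

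For part~(i), I would write $\log|B_n(E)|$ and $\log\NN_{\alpha,\lambda}(B_n(E))$, up to additive bounded errors, as subadditive cocycles over a skew-product dynamical system encoding the level-to-level renormalization. Although $(a_n)$ is deterministic, the combination of the type sequence with the pushforward of $\NN_{\alpha,\lambda}$ should produce an ergodic invariant measure for this skew-product. Kingman's subadditive ergodic theorem will then give, for $\NN_{\alpha,\lambda}$-a.e.~$E$,
\[
\limsup_n \frac{\log \NN_{\alpha,\lambda}(B_n(E))}{\log |B_n(E)|}=d_P(\alpha,\lambda),\qquad \liminf_n \frac{\log \NN_{\alpha,\lambda}(B_n(E))}{\log |B_n(E)|}=d_H(\alpha,\lambda),
\]
which, together with the bounded-distortion passage from bands to balls, proves exact upper and lower dimensionality.

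For part~(ii), by~(i) it suffices to construct $\alpha\in\mathscr{B}$ with $d_H(\alpha,\lambda)<d_P(\alpha,\lambda)$. Using the explicit formula and asymptotics of $d(\alpha_\kappa,\lambda)$ from \cite{Q}, I would first select $\kappa_1\ne\kappa_2\in\N$ and $\lambda_0>20$ so that $d(\alpha_{\kappa_1},\lambda)\ne d(\alpha_{\kappa_2},\lambda)$ for all $\lambda\ge\lambda_0$. I would then take $\alpha=[0;a_1,a_2,\ldots]$ whose partial-quotient sequence consists of alternating very long blocks of $\kappa_1$'s and $\kappa_2$'s, with block lengths $L_k$ growing so fast that each $L_{k+1}$ dwarfs $L_1+\cdots+L_k$. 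Deep inside a $\kappa_i$-block the local renormalization at the relevant scales is essentially indistinguishable from that of $\alpha_{\kappa_i}$, so a quantitative version of the exact-dimensionality result of \cite{Q} should give that, for $\NN_{\alpha,\lambda}$-a.e.~$E$ and for $n$ lying sufficiently deep inside the $k$-th block, the ratio displayed above is within $o(1)$ of $d(\alpha_{\kappa_{i_k}},\lambda)$. Alternation of the indices $i_k$ then forces the upper and lower limits to differ, and (i) promotes this to $d_H<d_P$.

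The hard part will be the quantitative stability in (ii): one must show that the local dimension deep inside a long $\kappa_i$-block genuinely tracks $d(\alpha_{\kappa_i},\lambda)$ despite the boundary effects inherited from the earlier differently-structured stretches of $(a_n)$. This requires revisiting the transfer-matrix products, band-width recursions, and DOS-mass recursions of \cite{Q} and making the dependence on finitely many initial partial quotients quantitative, with an explicit rate at which boundary effects decay with the depth into the current block. A parallel difficulty in (i) is identifying the correct ergodic base dynamics when $(a_n)$ is fixed rather than random; the likely resolution is a skew-product renormalization whose invariant measure is obtained as an accumulation point of pushforwards of $\NN_{\alpha,\lambda}$ across levels.
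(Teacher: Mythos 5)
Your outline for part~(ii) is essentially the paper's construction: take $\alpha=[0;a_1,a_2,\dots]\in\mathscr{B}_2$ whose partial quotients form alternating blocks $1^{t_1}2^{\tau_1}1^{t_2}2^{\tau_2}\cdots$ with lengths chosen recursively so that each new block swamps the cumulative contribution of the preceding ones, then use the two distinct constants $d(1,\lambda)\ne d(2,\lambda)$ from \cite{Q} to force the local dimension to oscillate on a set of positive $\NN_{\alpha,\lambda}$-measure, and finally invoke part~(i) to upgrade the oscillation to $d_H<d_P$. Your instinct about what the ``hard part'' is (quantitative decay of boundary effects deep inside a block) also matches where the paper spends its effort, via the bounded-covariation estimate of \cite{FLW} and explicit control of $q_n(\alpha)$ through Lemma~\ref{a-a-q-n}.

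Your proposed proof of part~(i), however, has a genuine gap that cannot be patched: Kingman's subadditive ergodic theorem applied to a subadditive cocycle over an ergodic transformation produces an a.e.\ \emph{limit}, not separate a.e.-constant $\limsup$ and $\liminf$. If you could set up a skew product with an ergodic invariant measure making $\log|B_n(E)|$ and $\log\NN_{\alpha,\lambda}(B_n(E))$ genuine additive or subadditive cocycles, Kingman (or Birkhoff) would force the ratio to \emph{converge} $\NN_{\alpha,\lambda}$-a.e., i.e.\ you would have proven $\NN_{\alpha,\lambda}$ is exact-dimensional for every $\alpha\in\mathscr{B}$ — directly contradicting part~(ii), which you want to prove in the same breath. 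The whole point of the theorem is that exact upper and lower dimensionality can hold while exact dimensionality fails, and no ergodic-averaging theorem can deliver constancy of $\limsup$ and $\liminf$ while allowing them to differ. Relatedly, there is no natural shift dynamics on the coding space $\Omega^{(\alpha)}$ for a single fixed $\alpha$ of bounded type (the alphabet and incidence matrix change with the level), and the pushforward of $\NN_{\alpha,\lambda}$ is not shift-invariant in any useful sense; the paper explicitly flags this obstruction. The paper's actual argument for part~(i) is non-ergodic: it introduces the class of potentials $\mathcal{F}_\alpha$ with bounded variation and bounded covariation, shows that the pushforward of $\NN_{\alpha,\lambda}$ is a ``Gibbs-like'' measure of the potential $\phi_n=\log q_n(\alpha)$ on a weak-Gibbs metric space, and then proves exactness of the lower (and upper) local dimension by a direct contradiction argument — using Besicovitch's covering property, Lebesgue density points, and bounded covariation to transport density from a set where $\underline{d}_\mu\le d_1$ to a set where $\underline{d}_\mu\ge d_2$ and derive an absurdity. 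That mechanism is what lets $\limsup$ and $\liminf$ be a.e.\ constant without forcing them to coincide, and it is the key idea missing from your proposal.
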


\begin{remark}
{\rm
(i) Like the example in \cite{JZ}, our result reveals a new phenomenon, which does not occur  for frequencies with eventually periodic expansions.
Our result says  more about the regularity of the DOS: although it can fail to be exact-dimensional, it is still nice in the sense that it is  exact upper and lower dimensional.  

(ii) By \cite{LW}, $d_P(\alpha,\lambda)<1$ for $\alpha\in \mathscr{B},$ so our result is different from that in \cite{JZ}. Indeed, all the frequencies in $\mathscr{B}$ are Diophantine, while the frequencies considered  in \cite{JZ} are Liouville.   It is known that $\mathscr{B}$ has Lebesgue measure $0$, but has Hausdorff dimension $1$. Thus the size of $\mathscr{B}$ is relatively  big.

(iii) From  fractal geometry point of view, it seems quite non-trivial to construct a finite measure  such that it is exact upper and lower dimensional, nevertheless, it is not exact-dimensional. Here, we  obtain  such kind of measures naturally.
} 
\end{remark} 

 In the following, we roughly describe our idea of proof. The key notion we will introduce is the {\it Gibbs-like measure}, which is an analog of Gibbs measure in the non-dynamical setting. 
 
 It is known that $\Sigma_{\alpha,\lambda}$   can be  coded  by a symbolic space $\Omega^{(\alpha)}$ through the coding map $\pi_\alpha:\Omega^{(\alpha)}\to{\Sigma}_{\alpha,\lambda}.$ Moreover
$$
{\Sigma}_{\alpha,\lambda}=\bigcap_{n\ge 1}\bigcup_{w\in \Omega^{(\alpha)}_{n} }B_w,
$$
where $\{B_w:w\in\Omega^{(\alpha)}_{n}\}$ are intervals related to the $n$-th periodic approximation of the operator $H_{\alpha,\lambda,0}$  (see \cite{R,LQW,Q}). The symbolic space $\Omega^{(\alpha)}$ is a generalization of subshift of finite type, which is defined by a sequence of alphabets and incidence matrices.
 In general, there is no dynamic on $\Omega^{(\alpha)}$ since  the shift map is not invariant. If $\alpha$ is of bounded type, only finitely many alphabets and incidence matrices are needed to construct $\Omega^{(\alpha)}.$  
 
 One can define a metric $d_\alpha$ on $\Omega^{(\alpha)}$ by 
 $$
 d_\alpha(x,y):=|B_{x\wedge y}|.
 $$
 It can be shown that
   $\pi_\alpha: (\Omega^{(\alpha)}, d_\alpha)\to({\Sigma}_{\alpha,\lambda}, |\cdot |)$ is bi-Lipschitz.  Define 
  $$
  \mu_\alpha:=(\pi_{\alpha}^{-1})_\ast({ \NN_{\alpha,\lambda}}),
  $$
  then $\mu_\alpha$ is supported on $\Omega^{(\alpha)}$ and  has the same dimensional property with $\NN_{\alpha,\lambda}$ since $\pi_\alpha$ is bi-Lipschitz. By applying \cite{R}, $\mu_\alpha$ has the following equivalent definition:  define 
 $$
 \mu_n:=\frac{1}{\#\Omega^{(\alpha)}_{n}}\sum_{w\in \Omega^{(\alpha)}_{n}}\delta_{x_w},
 $$
  where $x_w$ is any fixed point in the cylinder $ [w]^\alpha$.
    Then $\mu_n$ converge to $\mu_\alpha$ in weak-star topology.  This definition suggests that $\mu_\alpha$ is the ``measure of maximal entropy" on $\Omega^{(\alpha)}$. It is well-known that the measure of maximal entropy of a subshift of finite type is a Gibbs measure, hence, we are motivated to generalize the notion of Gibbs measure  to our non-dynamical setting.
    
    Based on this observation, we will define Gibbs-like measure on certain symbolic space like $\Omega^{(\alpha)}$ and  develop the dimension theory of it.   
More precisely, at first, we define the abstract symbolic space $\Omega_\alpha$; next, we define the notion of Gibbs-like measure and introduce a family of nice potentials $\mathcal F_\alpha,$ such that for each $\Phi\in \mathcal F_\alpha$  we can associate a Gibbs-like measure; then we will show that such kind of measure is exact upper and lower dimensional. As an application of this theory, we show that $\mu_\alpha$ is indeed a Gibbs-like measure of the potential $\Phi=\{\phi_n: n\ge 1\}$ defined by 
$$
\phi_n(x):= \log q_n(\alpha),
$$
where $q_n(\alpha)$ is the denominator of the $n$-th approximation  of $\alpha$ (we will show that $q_n(\alpha)\le \#\Omega^{(\alpha)}_n\le 5q_n(\alpha)$).
As a consequence, $\mu_\alpha$ is   exact upper and lower dimensional, so does $\NN_{\alpha,\lambda}$.

To construct the DOS which is not exact-dimensional, we make use of the fact that $\dim_H \NN_{\alpha_1,\lambda}\ne \dim_H \NN_{\alpha_2,\lambda}$ for $\lambda$ large enough (see \cite{Q}). We will construct a frequency $\alpha=[0; a_1,a_2,\cdots]$ such that $a_1a_2\cdots= 1^{t_1}2^{\tau_1}1^{t_2}2^{\tau_2}\cdots$. By choosing $t_i$ and $\tau_i$ carefully, we can show that the local dimension of $\NN_{\alpha,\lambda}$ does not exist $\NN_{\alpha,\lambda}$-a.e., hence $\NN_{\alpha,\lambda}$ is not exact-dimensional.

Finally, we say some words on notations.
By $a_n\sim b_n$, we mean that
there exists $C>1$ such that $C^{-1} b_n\le a_n\le C b_n$ for any $n$. By $a_n\sim_D b_n$, we mean that the constant $C$ only depends on $D$. Given two measures $\mu,\nu$ on a measurable space $(X,\mathscr{F})$, $\mu\asymp \nu$ means that there exists a constant $C>1$ such that $C^{-1}\mu(A)\le \nu(A)\le C\mu(A)$ for any $A\in \mathscr{F}.$

  The rest of the paper is organized as follows. In Sect. \ref{sec-space-potential}, we define the symbolic space and the potentials on it. In Sect. \ref{sec-gibbs-like}, we define Gibbs-like measure and study its exact-dimensional property. The result in this section is of independent interest. In Sect. \ref{sec-structure}, we study the structure of the spectrum of Sturm Hamiltonian. In Sect. \ref{sec-proof-main}, we apply the result in Sect. \ref{sec-gibbs-like} to  prove Theorem \ref{main}.

\section{Symbolic space and potentials}\label{sec-space-potential}

The definitions of this section are motivated both by symbolic dynamical system and by the structure of the spectrum of Sturm Hamiltonian.

\subsection{Symbolic space}   

Fix $M\ge 1$. Define $\mathbb{A}:=\{1,\cdots,M\}^\N.$ Assume $\{\mathcal{A}_1,\cdots, \mathcal{A}_M\}$ is  a pair-wise  disjoint  family of  alphabets.  Write
$
\mathcal{A}_i=\{a_{i1},\cdots, a_{in_i}\}.
$
We further assume that $n_i\ge 2$ for $i=1,\cdots,M.$ Let $\mathcal{A}:=\bigcup_{i=1}^M\mathcal{A}_i$, then $\#\A=n_1+\cdots +n_M.$  For any pairs $(\mathcal{A}_i,\mathcal{A}_j)$,  we associate  an $n_i\times n_j$ incidence matrix $A_{ij}$ with  entries $0$ or $1$. For $a_{ik}\in \A_i$ and $a_{jl}\in\A_j$, we say word $a_{ik}a_{jl}$ {\it admissible} and denote by $a_{ik}\to a_{jl}$ if $A_{ij}(k,l)=1,$ where $B(k,l)$ denote the $(k,l)$-th entry of a matrix $B.$ We always assume the following

{\bf Strongly primitive condition}: there exists $p_0\ge 2$ such that for any $m\ge p_0$ and any  $a_1\cdots a_{m}\in \{1,\cdots,M\}^{m}$, 
\begin{equation}\label{strong-primitive}
A_{a_1a_2}\cdot A_{a_2a_3} \cdots A_{(m-1)m} \text{ is a positive matrix}.
\end{equation}

The following property will be repeated used later. It follows directly from \eqref{strong-primitive}.

\begin{lem}\label{connecting}
Fix $m\ge p_0$ and $a_1\cdots a_m\in \{1,\cdots,M\}^m$. Take any $e\in \A_{a_1}$ and $\hat e\in \A_{a_m}$. Then there exists an admissible word $w\in \prod_{i=1}^m\A_{a_i}$ such that $w_1=e$ and $w_m=\hat e.$
\end{lem}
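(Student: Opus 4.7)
The plan is to interpret the $(k,l)$-entry of the product matrix $A_{a_1 a_2} \cdot A_{a_2 a_3} \cdots A_{a_{m-1} a_m}$ combinatorially as counting admissible words of the required shape, and then apply the strongly primitive condition to conclude that this count is strictly positive.

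More precisely, write $e = a_{a_1, k_1}$ and $\hat e = a_{a_m, k_m}$ for the corresponding indices $k_1 \in \{1,\ldots,n_{a_1}\}$ and $k_m \in \{1,\ldots, n_{a_m}\}$. By a straightforward induction on $m$ (essentially the standard interpretation of powers of an incidence matrix), the $(k_1, k_m)$-entry of the product
\[
P := A_{a_1 a_2}\cdot A_{a_2 a_3}\cdots A_{a_{m-1} a_m}
\]
equals the number of tuples $(k_1, k_2, \ldots, k_m)$ with $k_i \in \{1,\ldots,n_{a_i}\}$ such that $A_{a_i a_{i+1}}(k_i, k_{i+1}) = 1$ for every $i=1,\ldots,m-1$. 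Each such tuple corresponds to an admissible word $w = a_{a_1, k_1}\, a_{a_2, k_2}\cdots a_{a_m, k_m} \in \prod_{i=1}^m \A_{a_i}$ with the prescribed first letter $e$ and last letter $\hat e$.

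Since $m \ge p_0$, the strongly primitive condition \eqref{strong-primitive} guarantees that $P$ is a positive matrix, so in particular $P(k_1, k_m) \ge 1$. Therefore at least one such admissible word $w$ exists, which proves the lemma.

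I do not expect any genuine obstacle here: the lemma is a direct unpacking of the matrix-product/path-counting dictionary together with the hypothesis, and the only care needed is consistent bookkeeping of indices between alphabet elements $a_{ij}$ and matrix coordinates.
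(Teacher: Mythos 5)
Your proof is correct and is exactly the intended argument: the paper simply states that the lemma ``follows directly from \eqref{strong-primitive},'' and what you have written out is the standard path-counting interpretation of the matrix product entry together with the positivity guaranteed by the strongly primitive hypothesis.
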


For each $\alpha=a_1a_2\cdots\in \mathbb{A},$ define the {\it symbolic space $\Omega_\alpha$} as 
\begin{equation*}
\Omega_\alpha:=\{x_1x_2\cdots\in \prod_{i=1}^\infty \mathcal{A}_{a_i}: x_i\to x_{i+1} \text{ for all } i\ge 1 \}.
\end{equation*}
$\alpha$ is called the {\it index} of $\Omega_\alpha.$ From the definition, $\Omega_\alpha$ is completed determined by the alphabet sequence $(\A_{a_1}, \A_{a_2},\cdots )$ and incidence matrix sequence $(A_{a_1a_2}, A_{a_2a_3}, \cdots)$. Given $x\in \Omega_\alpha$ and $n\in \N$, write $x|_n:=x_1\cdots x_n.$ 

Define a metric on  $ \prod_{i=1}^\infty \mathcal{A}_{a_i}$ as 
\begin{equation*}
\hat d(x,y): =2^{-|x\wedge y|},
\end{equation*}
 where $x\wedge y$ denotes the maximal common prefix of $x$ and $y$, then $ \prod_{i=1}^\infty \mathcal{A}_{a_i}$ becomes a compact metric space. It is seen that $\Omega_\alpha$ is a closed subset of $ \prod_{i=1}^\infty \mathcal{A}_{a_i}$, hence also compact.

\begin{rem}\label{rem2.4}
{\rm 
If $\alpha=\kappa^\infty$, then 
 there is only one alphabet $\mathcal{A}_\kappa$, and  only one  incidence matrix $A_{\kappa\kappa}$, which is primitive.   In this case,     $ \Omega_{\alpha}$ is a subshift of finite type with alphabet $\A_\kappa$ and incidence matrix $A_{\kappa\kappa}$. 
}
\end{rem}

Fix $\vec{a}=a_1\cdots a_n\in \{1,\cdots,M\}^n$, 
define 
$$
\Omega_{\vec{a},n}:=\{w_1\cdots w_n\in\prod_{i=1}^n \mathcal{A}_{a_i} : w_i\to w_{i+1}  \}.
$$
For $\alpha=a_1a_2\cdots\in \mathbb{A}$, write $\alpha|_n:=a_1\cdots a_n$, 
\begin{equation*}\label{Omega-alpha-n}
\Omega_{\alpha,n}:=\Omega_{\alpha|_n,n}\ \ \ \text{  and }\ \ \ \Omega_{\alpha,\ast}:=\bigcup_{n\ge 1} \Omega_{\alpha,n}.
\end{equation*}

Any $w\in \Omega_{\alpha,\ast}$ is called an {\it admissible} word. For $w\in \Omega_{\alpha,\ast}$, $|w|$ denotes the length of  $w$. 
If $v,w\in \Omega_{\alpha,\ast}$ and $w=vu$,  $v$ is called  a {\it prefix} of $w$ and denoted by $v\prec w.$
 
 For any $w\in \Omega_{\alpha,n}$ and $m\ge 0,$ define the set of  {\it descendents  of $w$ of $m$-th generation} as 
\begin{equation}\label{descendent}
 \Xi_{w,m}(\alpha):=\{v\in \Omega_{\alpha,n+m}: w\prec v\}.
\end{equation}
 By \eqref{strong-primitive} and the definition of admissibility, it is seen that 
 \begin{equation}\label{num-xi}
 1\le \#\Xi_{w,m}(\alpha)\le (\#\A)^{m}. 
 \end{equation}

 Given $w\in \Omega_{\alpha,n}$, the {\it cylinder} $[w]_\alpha$ in $\Omega_\alpha$ is defined as 
$$
[w]_\alpha:=\{x\in \Omega_\alpha: x|_n=w\}. 
$$
It is well-known that $[w]_\alpha$ is both open and closed, and $\{[w]_\alpha: w\in \Omega_{\alpha,\ast}\}$ forms a basis of the topology on $\Omega_\alpha.$

We denote by $\mathcal{M}(\Omega_\alpha)$ the set of probability measures supported on $\Omega_\alpha.$

\subsection{potential and weak Gibbs metric on $\Omega_\alpha$}

To introduce the Gibbs-like measure  and the weak Gibbs metric on $\Omega_\alpha$, we need to consider  potentials   defined on $\Omega_\alpha$.   
\subsubsection{Potentials on $\Omega_\alpha$}

Given a family of continuous functions $\phi_n: \Omega_\alpha\to \mathbb{R}$. Write $\Phi=\{\phi_n:n\ge1\}.$   $\Phi$ is called a {\it potential}.  

We call $\Phi$ {\it regular} if there exists a constant $C_{\rm rg}(\Phi)>0$ such that for any $n,m\in\N$, 
\begin{equation}\label{regular}
\|\phi_n-\phi_m\|_\infty\le C_{\rm rg}(\Phi)|n-m|.
\end{equation}

   We say that $\Phi$ has  {\it bounded variation} if there exists a constant $C_{\rm bv}(\Phi)\ge 0$ such that for any $n\in \N,$
\begin{equation}\label{b-dis}
\sup\{|\phi_n(x)-\phi_n(y)|: x,y\in \Omega_\alpha, x|_n=y|_n\}\le C_{\rm bv}(\Phi).
\end{equation}

we say that $\Phi$ has {\it bounded covariation } if there exists a constant $C_{\rm bc}(\Phi)\ge0$ such that  for any $w=uv,\tilde w=\tilde u v\in \Omega_{\alpha,\ast}$ and any $x\in [w]_\alpha, \tilde x\in [\tilde w]_\alpha$, 
\begin{equation}\label{b-cov}
|(\phi_{|w|}(x)-\phi_{|u|}(x))-( \phi_{|\tilde w|}(\tilde x)-\phi_{|\tilde u|}(\tilde x))|\le C_{\rm bc}(\Phi).
\end{equation}

We call $\Phi$ {\it positive}  if  $\phi_n(x)\uparrow +\infty$  for any $x\in \Omega_\alpha$. We call $\Phi$ {\it negative}, if $-\Phi$ is positive. 

Denote by $\widehat{\mathcal{F}}_\alpha $  the set of all regular potentials with bounded variation. Denote by $  {\mathcal{F}}_{\alpha}$ the set of  all potentials in $\widehat{\mathcal{F}}_\alpha$  with bounded covariation. Define 
$$
\widehat{\mathcal{F}}_\alpha^+:=\{\Phi\in \widehat{\mathcal{F}}_\alpha: \Phi \text{ is positive}\}\ \ \text{ and }\ \ {\mathcal{F}}_\alpha^+:=\{\Phi\in {\mathcal{F}}_\alpha: \Phi \text{ is positive}\}.
$$
$\widehat{\mathcal{F}}_\alpha^-$ and ${\mathcal{F}}_\alpha^-$ can be defined similarly.

\subsubsection{Weak Gibbs metric  on $\Omega_\alpha$}

The following definition is motivated by  \cite{GP97,KS04,BQ}. 
Fix a  potential $\Psi\in \widehat{\mathcal{F}}_\alpha^-$. For any $w\in \Omega_{\alpha,\ast}$, define 
\begin{equation*}
r_w(\Psi):=\sup_{x\in[w]_\alpha} \exp(\psi_{|w|}(x)).
\end{equation*}
For any  $x,y\in \Omega_\alpha$, define    
\begin{equation}\label{weak-gibbs}
d_\Psi(x,y):=
\begin{cases}
r_{x\wedge y}(\Psi)& x\ne y,\\
0& x=y.
\end{cases}
\end{equation}

\begin{lem}\label{lem-weak-gibbs}
 (i) $d_\Psi$ is a ultrametric on $\Omega_\alpha$ and it induces the same topology on $\Omega_\alpha$ as $\hat d$ does.    $(\Omega_\alpha,d_\Psi)$ has no isolated point. 
 
 (ii) Define $c:=\exp(-C_{\rm bv}(\Psi)-p_0 C_{\rm rg}(\Psi)), $ then for any $w\in \Omega_{\alpha,n}$, 
  \begin{equation}\label{diameter}
 c\cdot r_w(\Psi)\le {\rm diam}([w]_\alpha)\le r_w(\Psi),
  \end{equation}
 Define $\hat c:=\exp(-C_{\rm bv}(\Psi))$, then for any $x\in [w]_\alpha$, 
\begin{equation}\label{in-out-ball}
B(x, \hat c\cdot r_w(\Psi))\subset [w]_\alpha\subset \overline{B}(x,r_w(\Psi)),
\end{equation}
where $\overline{B}$ means closed ball.
  
 (iii) $(\Omega_\alpha, d_\Psi)$ has Besicovitch's covering property.
\end{lem}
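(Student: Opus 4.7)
The plan is to exploit two structural features: the pointwise monotonicity $\psi_{n+1}\le\psi_n$ (from negativity of $\Psi$) and the tree structure of prefixes in $\Omega_\alpha$. My first step is to verify that $d_\Psi$ is an ultrametric. The key observation is that for $u\prec v$ in $\Omega_{\alpha,\ast}$, monotonicity gives $\psi_{|u|}(z)\ge\psi_{|v|}(z)$ for every $z\in[v]_\alpha$, whence $r_u(\Psi)\ge r_v(\Psi)$. Given three distinct points $x,y,z$, one of $x\wedge y$ and $y\wedge z$ is a prefix of the other (being prefixes of $y$), and in either case a common prefix of $x$ and $z$, so $r_{x\wedge z}(\Psi)\le\max(r_{x\wedge y}(\Psi),r_{y\wedge z}(\Psi))$. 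For the topology equivalence I will show $\max_{\Omega_\alpha}\psi_n\to-\infty$: the closed sets $\{z:\psi_n(z)\ge -M\}$ decrease to $\emptyset$ by negativity, so by compactness they vanish for large $n$, giving uniform divergence. This forces $|x_n\wedge x|\to\infty$ and $d_\Psi(x_n,x)\to 0$ to be equivalent. No isolated points then follows from Lemma \ref{connecting} and $n_i\ge 2$, which allow us to extend any prefix in at least two distinct admissible ways and hence produce other points of every cylinder.

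For part (ii), the outer inclusion and diameter upper bound are immediate from the monotonicity $r_u\ge r_v$ for $u\prec v$: if $y\in[w]_\alpha$ then $x\wedge y\succeq w$, so $d_\Psi(x,y)=r_{x\wedge y}(\Psi)\le r_w(\Psi)$. For the inner inclusion, if $y\notin[w]_\alpha$ then $x\wedge y$ is a strict prefix of $w$ and $r_{x\wedge y}(\Psi)\ge r_w(\Psi)>\hat c\cdot r_w(\Psi)$, placing $y$ outside $B(x,\hat c\cdot r_w(\Psi))$. For the diameter lower bound I invoke Lemma \ref{connecting}: extending $w$ by $p_0$ further letters I can reach any element of $\A_{a_{|w|+p_0}}$, and since $n_{a_{|w|+p_0}}\ge 2$ I obtain two admissible continuations that first disagree no later than position $|w|+p_0$; extending them arbitrarily to infinite sequences yields $x,y\in[w]_\alpha$ with $|x\wedge y|\le|w|+p_0$. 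Bounded variation and regularity then give
\begin{equation*}
r_{x\wedge y}(\Psi)\ge r_w(\Psi)\exp(-C_{\rm bv}(\Psi)-p_0 C_{\rm rg}(\Psi))=c\cdot r_w(\Psi).
\end{equation*}

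For part (iii), I will show that every ball in $d_\Psi$ is itself a cylinder. Given $x$ and $r>0$, set $n:=\min\{k:r_{x|_k}(\Psi)<r\}$, which is well defined since $r_{x|_k}\to 0$ by the uniform divergence above and $r_{x|_k}$ is decreasing in $k$ by monotonicity. Then the condition $r_{x\wedge y}(\Psi)<r$ is equivalent to $x\wedge y\succeq x|_n$, i.e.\ $y\in[x|_n]_\alpha$, so $B(x,r)=[x|_n]_\alpha$. Two cylinders in $\Omega_\alpha$ are either disjoint or one contains the other, so from any family of balls covering a bounded set one can extract a pairwise disjoint subcover by retaining only the maximal cylinders; this gives Besicovitch's covering property with multiplicity one.

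The step requiring the most care is the uniform divergence $\max_{\Omega_\alpha}\psi_n\to-\infty$, since negativity by itself supplies only pointwise convergence and regularity only a linear-in-$n$ modulus of oscillation. Once compactness of $\Omega_\alpha$ and the Dini-type argument above are in place, all of parts (i)--(iii) reduce to bookkeeping with the constants $C_{\rm bv}(\Psi)$ and $C_{\rm rg}(\Psi)$ together with the monotonicity $r_u(\Psi)\ge r_v(\Psi)$ for $u\prec v$ and the two-extension property from Lemma \ref{connecting}.
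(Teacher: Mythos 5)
Your proof is correct and follows essentially the same route as the paper, but it tightens several points, so a short comparison is worthwhile. The key observation you isolate up front --- that $u\prec v$ implies $r_u(\Psi)\ge r_v(\Psi)$, by negativity and $[v]_\alpha\subset[u]_\alpha$ --- is exactly what underlies the paper's ``Claim~1,'' but making it explicit streamlines everything downstream: the ultrametric inequality becomes a one-line prefix argument, the outer inclusion $[w]_\alpha\subset\overline B(x,r_w(\Psi))$ is immediate, and most notably your inner inclusion argument is cleaner than the paper's. You observe that if $y\notin[w]_\alpha$ then $x\wedge y$ is a strict prefix of $w$, so $d_\Psi(x,y)=r_{x\wedge y}(\Psi)\ge r_w(\Psi)$ by monotonicity alone, which gives the stronger inclusion $B(x,r_w(\Psi))\subset[w]_\alpha$; the paper's argument passes through bounded variation to get only $B(x,\hat c\, r_w(\Psi))\subset[w]_\alpha$, so the constant $\hat c$ is actually unnecessary here. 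You also make explicit a step the paper passes over quickly: the uniform divergence $\psi_n^\ast=\max_{\Omega_\alpha}\psi_n\to-\infty$. The paper asserts this directly from negativity, but negativity only gives pointwise divergence; your Dini-type argument (the closed sets $\{\psi_n\ge -M\}$ decrease to $\emptyset$, so by compactness of $\Omega_\alpha$ they are eventually empty) is the right justification, and it uses only continuity and negativity, not bounded variation or covariation, as is appropriate since $\Psi\in\widehat{\F}_\alpha^-$ here. For (iii) you show open balls are cylinders and extract maximal cylinders; the paper does the same with closed balls and spells out the extraction via a nested sequence $\mathcal W_n,\mathcal U_n$. Your ``retain maximal cylinders'' is correct (the cylinders in the family containing a given $x$ form a chain, the one of minimal length exists by well-ordering of $\N$, and it is maximal), though you may want to record that closed balls, not just open ones, are cylinders, since Besicovitch's property is usually stated for closed balls; the identical argument with $\le$ in place of $<$ handles this.
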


\begin{proof}
(i)\ 
It is obvious that $d_\Psi(x,y)\ge 0$, $d_\Psi(x,y)=0$ if and only if $x=y$ and $d_\Psi(x,y)=d_\Psi(y,x)$. Let us check the triangle inequality. 

{\bf Claim 1:} If $x\wedge z \prec x\wedge y$, then $d_\Psi(x,y)\le d_\Psi(x,z)$.

$\lhd $  Write  $m:=|x\wedge z|$ and $n:=|x\wedge y|$, then $m\le n.$ Assume $t\in [x\wedge y]_\alpha$ is such that 
$d_\Psi(x,y)=\exp(\psi_n(t))$, we have  $\psi_n(t)\le \psi_m(t)$ since $\Psi$ is negetive. Since $t\in[x\wedge y]_\alpha\subset [x\wedge z]_\alpha$, we conclude that $d_\Psi(x,y)\le d_\Psi(x,z).$
\hfill $\rhd$

 Given $x,y\in \Omega_\alpha,$ without loss of generality, we assume $x\ne y.$  Assume $|x\wedge y|=n$, then  $x_{n+1}\ne y_{n+1}$. Take any $z\in \Omega_\alpha.$ If $z|_n\ne x\wedge y,$ then  $x\wedge z \prec x\wedge y$. By Claim 1, $d_\Psi(x,y)\le d_\Psi(x,z).$ If  $z|_n=x\wedge y$,
 then at least one of $x\wedge z$ and $y\wedge z$ is equal to $x\wedge y$ since $x_{n+1}\ne y_{n+1}$. Hence $d_\Psi(x,y)\le \max\{d_\Psi(x,z), d_\Psi(y,z)\}.$
Thus $d_\Psi$ is a ultrametric on $\Omega_\alpha.$

Write $\psi_n^\ast=\max\{\psi_n(x): x\in \Omega_\alpha\}$ and $\psi_{n\ast}=\min\{\psi_n(x): x\in \Omega_\alpha\}$. Since $\Psi$ is negative, we conclude that $\psi_n^\ast\downarrow -\infty$ as $n\to\infty.$ By the definition, we have 
$$
\exp(\psi_{|x\wedge y|\ast})\le d_\Psi(x,y)\le \exp(\psi_{|x\wedge y|}^\ast)\ \ \text{ and }\ \ \hat d(x,y)=2^{-|x\wedge y|}.
$$
From this, it is easy to show that $d_\Psi$ and $\hat d$ induce the same topology.

Fix any $x\in \Omega_\alpha,$ by Lemma \ref{connecting}, we can construct a sequence $\{y^k:k\ge1\}$ such that $x\ne y^k$ and  $|x\wedge y^k|\uparrow \infty,$  thus $d_\Psi(x,y^k)\to 0$. This means that $x$ is not isolated.

(ii)\ If $x,y\in [w]_\alpha,$ then $[x\wedge y]_\alpha\subset[w]_\alpha$.  Since $\Psi$ is negetive,
\begin{equation}\label{include}
d_\Psi(x,y)=\sup_{z\in[x\wedge y]_\alpha} \exp(\psi_{|x\wedge y|}(z))\le  \sup_{z\in[w]_\alpha} \exp(\psi_{|w|}(z))=r_w(\Psi).
\end{equation}
On the other hand,  by Lemma \ref{connecting},  there exist $x,y\in [w]_\alpha $ such that $x\wedge y=wu$ with $m:=|u|\le p_0.$ Assume $t\in [x\wedge y]_\alpha$ is such that $\exp(\psi_{|w|+m}(t))=d_\Psi(x,y)$ and  $\tilde t\in [w]_\alpha$ is such that $\exp(\psi_{|w|}(\tilde t))=\sup_{z\in[w]_\alpha} \exp(\psi_{|w|}(z)).$ Then by the regular property and bounded variation of $\Psi$, 
\begin{eqnarray*}
d_\Psi(x,y)&=&\exp(\psi_{|w|+m}(t))=\exp(\psi_{|w|}(t))\exp(\psi_{|w|+m}(t)-\psi_{|w|}(t))\\
&\ge&\exp(\psi_{|w|}(t))\exp(-m C_{\rm rg}(\Psi))\ge \exp(\psi_{|w|}(t))\exp(-p_0 C_{\rm rg}(\Psi))\\
&\ge&\exp(\psi_{|w|}(\tilde t))\exp(\psi_{|w|}(t)-\psi_{|w|}(\tilde t))\exp(-p_0 C_{\rm rg}(\Psi))\\
&\ge&\exp(\psi_{|w|}(\tilde t))\exp(-C_{\rm bv}(\Psi)-p_0 C_{\rm rg}(\Psi)).
\end{eqnarray*}
Then \eqref{diameter} holds.

Now fix $x\in [w]_\alpha$, by \eqref{include}, $[w]_\alpha\subset \overline{B}(x,r_w(\Psi)).$ If $y\not\in [w]_\alpha$, then $x\wedge y$ is a strict prefix of $w$ and hence $m:=|x\wedge y|<|w|.$ Assume $\tau\in[x\wedge y]_\alpha$ is such that $d_\Psi(x,y)=\exp(\psi_m(\tau))$ and $\tilde \tau\in[w]_\alpha$ is such that $r_w(\Psi)=\exp(\psi_{|w|}(\tilde \tau))$.  Since $\Psi$ is negative and has bounded variation,  we have 
\begin{eqnarray*}
d_\Psi(x,y)&=&\exp(\psi_m(\tau))=\exp(\psi_m(\tilde \tau))\exp(\psi_m(\tau)-\psi_m(\tilde \tau))\\
&\ge&\exp(\psi_m(\tilde \tau))\exp(-C_{\rm bv}(\Psi))\\
&=&\exp(\psi_{|w|}(\tilde \tau))\exp(\psi_m(\tilde \tau)-\psi_{|w|}(\tilde \tau))\exp(-C_{\rm bv}(\Psi))\\
&\ge&\exp(-C_{\rm bv}(\Psi))r_w(\Psi).
\end{eqnarray*}
This means that $B(x, \hat c\cdot r_w(\Psi))\subset [w]_\alpha.$ Thus \eqref{in-out-ball} holds.

(iii)\ At first we show the following:

{\bf Claim 2:} For any $r>0$ and $x\in \Omega_\alpha$, the  closed ball $\overline{B}(x,r)$ is a cylinder.

$\lhd$
 Since $x$ is not isolated, there exists $y\in \overline{B}(x,r)\setminus\{x\}.$ Let us show that $[x\wedge y]_\alpha\subset \overline{B}(x,r).$ Indeed, if $z\in [x\wedge y]_\alpha$, then $x\wedge y\prec x\wedge z$. By Claim 1, $d_\Psi(x,z)\le d_\Psi(x,y)\le r,$ so $z\in \overline{B}(x,r)$.  
  
  Define $n_0:=\inf\{|x\wedge y|: y\in \overline{B}(x,r)\}$.
Let us show that  $\overline{B}(x,r)=[x|_{n_0}]_\alpha.$

 Since $\overline{B}(x,r)\setminus \{x\}$ is nonempty, there exists $y\in \overline{B}(x,r)$ such that $x\wedge y=x|_{n_0}.$ We already showed that $[x|_{n_0}]_\alpha=[x\wedge y]_\alpha\subset \overline{B}(x,r)$.
If there exists $z\in \overline{B}(x,r)\setminus [x|_{n_0}]_\alpha,$ then $x\wedge z$ must be a strict prefix of $x|_{n_0},$ hence $|x\wedge z|<n_0$, which contradicts with the definition of $n_0$.
\hfill $\rhd$

 Now assume $A\subset \Omega_\alpha$ and $\mathcal B$ is a family of closed balls such that  for any  $x\in A$, there exists a closed ball $\overline{B}(x,r_x)\in \mathcal B.$ By Claim 2, $\overline{B}(x,r_x)=[w^x]_\alpha$ for some $w^x\in \Omega_{\alpha,\ast}$.  
 Write  $\mathcal U_0:=\{w^x: x\in A\}$. We define two sequences $\mathcal W_n$ and $\mathcal U_n$ inductively as follows. Assume $\mathcal U_{n-1}$ has been defined, write  $m_{n-1}:=\min\{|w|: w\in \mathcal U_{n-1}\}$. Define 
 $$
 \mathcal W_n:= \Omega_{\alpha,m_{n-1}}\cap \mathcal U_{n-1}.
 $$
 Define $\mathcal U_n$ as 
 $$
 \mathcal U_n:=\{w\in \mathcal U_{n-1}: u\not \prec w \text{ for any } u\in \mathcal W_n\}.
 $$
 If $\mathcal U_n= \emptyset$, then the process stops. Otherwise, 
 $m_n>m_{n-1}$, thus induction can continue. By induction, we finish the definition of $\mathcal W_n$ and $\mathcal U_n$. 
 
  Now define $\mathcal W_\infty:=\bigcup_{n\ge 1}\mathcal W_n.$
 
 {\bf Claim 3: } If $w, \tilde w\in \mathcal W_\infty$ and $w\ne \tilde w$, then $[w]_\alpha\cap [\tilde w]_\alpha=\emptyset.$ Moreover 
 $$A \subset  \bigcup_{w\in \mathcal W_\infty} [w]_\alpha. $$
 
 $\lhd$ By the way we define $\mathcal W_n$, if $w\ne \tilde w,$ then $w\not\prec \tilde w$ and $\tilde w\not\prec w,$ hence $[w]_\alpha\cap [\tilde w]_\alpha=\emptyset.$ Given  $x\in A$. Either $w^x\in \mathcal W_\infty,$ then $x\in \overline{B}(x,r_x)=[w^x]_\alpha.$ Or $w^x\not\in \mathcal W_\infty,$ thus $w^x$ is gotten rid of from $\mathcal U_{k-1}$ for some $k.$ However, this means that there exists  some $w\in \mathcal W_k$ such that $w\prec w^x.$ Consequently, $x\in \overline{B}(x,r_x)=[w^x]_\alpha\subset [w]_\alpha.$
  \hfill $\rhd$
 
 Claim 3 obviously implies that $(\Omega_\alpha,d_\Psi)$ has Besicovitch's covering property with multiplicity $1$ (see for example \cite{M} Chapter 2 for the related definition).
\end{proof}

\section{Exact-dimensional property of Gibbs-like measure}\label{sec-gibbs-like}

At first, we define Gibbs-like measure and give criterion for its existence.  Then we fix a weak Gibbs metric on $\Omega_\alpha$  and  study the exact-dimensional properties  of Gibbs-like measures. 
We write $\alpha$ as $\alpha=a_1a_2a_3\cdots.$

\subsection{Definition and existence of Gibbs-like measure}

Our definition of Gibbs-like measure is inspired   by \cite{MRW,FLW}, where the measure is defined in a geometric way. 

Given $\Phi\in\widehat{\mathcal{F}}_\alpha$,    $\mu\in \mathcal{M}(\Omega_\alpha)$  is called  a {\it Gibbs-like} measure of $\Phi$, if there exists a constant $C>1$ such that  for any $x\in\Omega_\alpha $,
\begin{equation}\label{def-gibbs-like}
\frac{C^{-1}\exp(\phi_n(x))}{\sum_{w\in\Omega_{\alpha,n}} \exp(\phi_n(x_w))}\le \mu([x|_n]_\alpha)\le \frac{C\exp(\phi_n(x))}{\sum_{w\in\Omega_{\alpha,n}} \exp(\phi_n(x_w))},
\end{equation}
where for any $w\in\Omega_{\alpha,n},$ $x_w$ is any fixed point in $ [w]_\alpha$.

We have the following criterion for the existence of Gibbs-like  measure:

\begin{thm}\label{exist-gibbs-like}
If $\Phi\in {\mathcal{F}}_\alpha$, then $\Phi$ has a Gibbs-like measure. Moreover the constant $C$ in \eqref{def-gibbs-like} only depends on $C_{\rm rg}(\Phi), C_{\rm bv}(\Phi), C_{\rm bc}(\Phi), \#\A$ and $p_0$.
\end{thm}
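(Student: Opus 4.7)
I consider the approximating measures
\[
\mu_m := \frac{1}{Z_m}\sum_{w\in\Omega_{\alpha,m}}\exp(\phi_m(x_w))\,\delta_{x_w},\qquad Z_m:=\sum_{w\in\Omega_{\alpha,m}}\exp(\phi_m(x_w)),
\]
and let $\mu$ be any weak-$\ast$ subsequential limit of $(\mu_m)$ in the compact space $\mathcal M(\Omega_\alpha)$. Since every cylinder $[w]_\alpha$ is clopen, weak-$\ast$ convergence on the chosen subsequence $m_k\to\infty$ forces $\mu_{m_k}([w]_\alpha)\to\mu([w]_\alpha)$. It is therefore enough to prove the Gibbs-like estimate already at the approximating level: for some constant $C$ depending only on $C_{\rm rg}(\Phi),C_{\rm bv}(\Phi),C_{\rm bc}(\Phi),\#\A,p_0$, and uniformly in $n\le m-2p_0$ and $w\in\Omega_{\alpha,n}$,
\[
C^{-1}\frac{\exp(\phi_n(x_w))}{Z_n}\le \mu_m([w]_\alpha)\le C\frac{\exp(\phi_n(x_w))}{Z_n}.
\]

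\textbf{Reduction to a uniform comparability.} Writing
\[
\mu_m([w]_\alpha)=\frac{1}{Z_m}\sum_{v\in\Xi_{w,m-n}(\alpha)}\exp(\phi_m(x_{wv})),\qquad Z_m=\sum_{u\in\Omega_{\alpha,n}}\sum_{v\in\Xi_{u,m-n}(\alpha)}\exp(\phi_m(x_{uv})),
\]
and summing over $\tilde w$, the estimate above reduces to the following uniform comparability: for all $w,\tilde w\in\Omega_{\alpha,n}$,
\[
\frac{1}{\exp(\phi_n(x_w))}\sum_{v\in\Xi_{w,m-n}}\exp(\phi_m(x_{wv}))\;\sim\;\frac{1}{\exp(\phi_n(x_{\tilde w}))}\sum_{v\in\Xi_{\tilde w,m-n}}\exp(\phi_m(x_{\tilde wv})),
\]
with a constant independent of $n,m,w,\tilde w$.

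\textbf{Main obstacle: the comparability lemma.} Set $k:=m-n\ge 2p_0$ and decompose every admissible extension $v\in\Xi_{w,k}$ as $v=v^{(1)}v^{(2)}$ with $|v^{(1)}|=p_0$ and $|v^{(2)}|=k-p_0$. Let $\Sigma$ denote the set of internally admissible $v^{(2)}$ occupying positions $n+p_0+1,\ldots,m$; by construction $\Sigma$ depends only on the index $\alpha$, not on $w$ or $\tilde w$. By Lemma \ref{connecting} the number of connectors $v^{(1)}$ making $wv^{(1)}v^{(2)}$ admissible lies in $[1,(\#\A)^{p_0}]$, and similarly for $\tilde w$, so up to a bounded multiplicative factor the two sums range over a common set $\Sigma$. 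For each $v^{(2)}\in\Sigma$ and admissible connectors $v^{(1)},\tilde v^{(1)}$, I apply bounded covariation with $u=wv^{(1)}$, $\tilde u=\tilde w\tilde v^{(1)}$ and common suffix $v^{(2)}$, and then use regularity and bounded variation to replace $\phi_{n+p_0}$ by $\phi_n$; this yields
\[
\bigl|(\phi_m(x_{wv^{(1)}v^{(2)}})-\phi_n(x_w))-(\phi_m(x_{\tilde w\tilde v^{(1)}v^{(2)}})-\phi_n(x_{\tilde w}))\bigr|\le C_{\rm bc}(\Phi)+2C_{\rm bv}(\Phi)+2p_0C_{\rm rg}(\Phi).
\]
Exponentiating, summing first over admissible connectors and then over $v^{(2)}\in\Sigma$, closes the comparability with a constant depending only on the listed parameters. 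The delicate point is precisely that the connectors $v^{(1)}$ and $\tilde v^{(1)}$ need \emph{not} coincide, so bounded covariation can be applied only with the shorter common suffix $v^{(2)}$, and the resulting length-$p_0$ gap must be absorbed by regularity together with the strongly primitive condition.

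\textbf{Conclusion.} Passing to the weak-$\ast$ limit $\mu_{m_k}\to\mu$ along cylinders gives $\mu([w]_\alpha)\sim \exp(\phi_n(x_w))/Z_n$ for every $n\ge1$ and $w\in\Omega_{\alpha,n}$. Finally, bounded variation of $\Phi$ allows one to replace the fixed basepoint $x_w$ in the numerator by an arbitrary $x\in[x|_n]_\alpha$ at a bounded multiplicative cost, producing exactly \eqref{def-gibbs-like} with the asserted dependence of $C$ on $C_{\rm rg}(\Phi),C_{\rm bv}(\Phi),C_{\rm bc}(\Phi),\#\A,p_0$.
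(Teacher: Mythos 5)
Your proof is correct and follows essentially the same route as the paper: define the atomic approximants weighted by $\exp(\phi_m(x_w))$, pass to a weak-$\ast$ limit along clopen cylinders, and reduce to a uniform comparability between $\sum_{v\in\Xi_{w,k}}\exp(\phi_{n+k}(x_{wv})-\phi_n(x_w))$ and the analogous sum for $\tilde w$, which you establish by splitting off a length-$p_0$ connector, invoking the strongly primitive condition (Lemma \ref{connecting}) for the lower bound $1\le\#\{v^{(1)}\}\le(\#\A)^{p_0}$, and then using bounded covariation on the common suffix $v^{(2)}$ together with regularity and bounded variation to absorb the length-$p_0$ gap. The paper packages exactly this comparability as a standalone lemma (organizing the decomposition by pivoting on the single letter at position $n+p_0$ rather than on the block $v^{(1)}$, and also treating the range $m\le p_0$ separately), but the estimates and the constant $(\#\A)^{p_0}\exp(2p_0C_{\rm rg}+2C_{\rm bv}+C_{\rm bc})$ are the same.
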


The proof relies on a technical lemma which we will state now. 

From now on, 
for any $w\in \Omega_{\alpha,\ast}$, we fix  $x_w\in [w]_\alpha$ once for all. Given a $\Phi\in \F_\alpha$ and  $w\in \Omega_{\alpha,n}$, define 
\begin{equation*}\label{def-sigma-n}
\begin{cases}
\sigma_n=\sigma_n(\Phi):=&\sum_{w\in\Omega_{\alpha,n}} \exp(\phi_n(x_w)),\\
\sigma^w_{m}=\sigma^w_{m}(\Phi):=&\sum_{v\in \Xi_{w,m}(\alpha)} \exp(\phi_{n+m}(x_v)-\phi_n(x_w)),
\end{cases}
\end{equation*}
 where $\Xi_{w,m}(\alpha)$ is defined by \eqref{descendent}.

\begin{lem}
Fix $\Phi\in \F_\alpha$ and define $\sigma_n, \sigma^w_{m}$ as above. 
Then there exists a constant $C>1$ depending only on $C_{\rm rg}(\Phi), C_{\rm bv}(\Phi), C_{\rm bc}(\Phi), \#\A$ and $p_0$  such that for any $n\in \N$, $m\ge 0$ and any $w,\tilde w\in \Omega_{\alpha,n}$,
\begin{equation}\label{s-m-w}
C^{-1} \sigma^{\tilde w}_m\le \sigma^w_{m}\le C \sigma^{\tilde w}_m.
\end{equation}
Consequently for any $w\in \Omega_{\alpha,n},$
\begin{equation}\label{sigma-n-m}
C^{-1}\sigma_n \sigma^w_{m}\le \sigma_{n+m}\le C\sigma_n \sigma^w_{m}.
\end{equation}
\end{lem}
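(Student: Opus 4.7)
The plan is to establish (\ref{s-m-w}) by a term-by-term comparison between $\sigma_m^w$ and $\sigma_m^{\tilde w}$, and then deduce (\ref{sigma-n-m}) by grouping $\Omega_{\alpha, n+m}$ over its length-$n$ prefixes. The range $0 \le m < p_0$ of (\ref{s-m-w}) is immediate: $\Xi_{w,m}(\alpha)$ is nonempty (apply Lemma \ref{connecting} to extend $w$ to length $n + p_0$ and truncate) and has at most $(\#\A)^{p_0}$ elements by (\ref{num-xi}), while each summand $\exp(\phi_{n+m}(x_v) - \phi_n(x_w))$ with $x_v \in [w]_\alpha$ is trapped between two positive constants depending only on $C_{\rm rg}(\Phi), C_{\rm bv}(\Phi), p_0$ via regularity and bounded variation. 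So I focus on $m \ge p_0$.

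The key construction is a bounded-to-one map $\Psi \colon \Xi_{w,m}(\alpha) \to \Xi_{\tilde w, m}(\alpha)$. Write a generic $v \in \Xi_{w, m}(\alpha)$ as $v = w u_1 u_2 \cdots u_m$ and let $\tilde e$ be the last letter of $\tilde w$. Apply Lemma \ref{connecting} to the alphabet block $(\A_{a_n}, \A_{a_{n+1}}, \ldots, \A_{a_{n+p_0}})$ of length $p_0 + 1$, with prescribed endpoints $\tilde e$ and $u_{p_0}$, to obtain bridge letters $\tilde u_1, \ldots, \tilde u_{p_0-1}$ making $\tilde e \to \tilde u_1 \to \cdots \to \tilde u_{p_0-1} \to u_{p_0}$ admissible, and set
$$
\Psi(v) := \tilde w \, \tilde u_1 \cdots \tilde u_{p_0-1} \, u_{p_0} u_{p_0+1} \cdots u_m \in \Xi_{\tilde w, m}(\alpha).
$$
Each fiber $\Psi^{-1}(\tilde v)$ contains at most $(\#\A)^{p_0-1}$ elements, because any preimage must agree with $\tilde v$ outside positions $n+1, \ldots, n+p_0-1$.

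Now exploit the common suffix $u_{p_0} u_{p_0+1} \cdots u_m$ shared by $v$ and $\Psi(v)$. The bounded covariation hypothesis (\ref{b-cov}), applied to the splittings $v = (w u_1 \cdots u_{p_0-1}) \cdot (u_{p_0} \cdots u_m)$ and $\Psi(v) = (\tilde w \tilde u_1 \cdots \tilde u_{p_0-1}) \cdot (u_{p_0} \cdots u_m)$, gives
$$
\bigl|(\phi_{n+m}(x_v) - \phi_{n+p_0-1}(x_v)) - (\phi_{n+m}(x_{\Psi(v)}) - \phi_{n+p_0-1}(x_{\Psi(v)}))\bigr| \le C_{\rm bc}(\Phi).
$$
Replacing $\phi_{n+p_0-1}$ by $\phi_n$ at the cost of $2(p_0-1) C_{\rm rg}(\Phi)$ (regularity), then converting $\phi_n(x_v), \phi_n(x_{\Psi(v)})$ to $\phi_n(x_w), \phi_n(x_{\tilde w})$ at the cost of $2 C_{\rm bv}(\Phi)$ (bounded variation, since $x_v \in [w]_\alpha$ and $x_{\Psi(v)} \in [\tilde w]_\alpha$), produces the pointwise estimate $\exp(\phi_{n+m}(x_v) - \phi_n(x_w)) \le C_0 \exp(\phi_{n+m}(x_{\Psi(v)}) - \phi_n(x_{\tilde w}))$ for some $C_0$ depending only on the listed parameters. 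Summing over $v$ and multiplying by the fiber bound yields $\sigma_m^w \le C_0 (\#\A)^{p_0-1} \sigma_m^{\tilde w}$, and symmetry gives the reverse inequality.

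Finally, (\ref{sigma-n-m}) follows from the identity $\sigma_{n+m} = \sum_{w \in \Omega_{\alpha, n}} \exp(\phi_n(x_w)) \sigma_m^w$, obtained by partitioning $\Omega_{\alpha, n+m}$ according to length-$n$ prefixes and pulling out the common factor $\exp(\phi_n(x_w))$ inside each block $\Xi_{w, m}(\alpha)$. Applying (\ref{s-m-w}) with any fixed reference $\tilde w$ gives $\sigma_{n+m} \asymp \sigma_n \sigma_m^{\tilde w}$ uniformly. The main subtlety I expect is the bookkeeping around the junction between $\tilde w$ and the bridge, together with applying bounded covariation to the correct suffix, the one that includes the letter $u_{p_0}$ sitting at position $n + p_0$; nothing else should be more than routine.
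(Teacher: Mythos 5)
Your proof is correct and follows essentially the same strategy as the paper's: both split off the easy range $m$ at most $p_0$, and both handle the main range by using strong primitivity to build a bridge across positions $n+1,\dots,n+p_0$ (respectively $n+p_0-1$), so that terms in $\sigma^w_m$ and $\sigma^{\tilde w}_m$ can be paired up with a common suffix and then compared via bounded covariation plus regularity and bounded variation, with the combinatorial loss controlled by $(\#\A)^{p_0}$. The only difference is bookkeeping: the paper decomposes both sums over triples $(e,u,v)$ and $(e,\tilde u,v)$ and compares inner sums term by term, whereas you package the same comparison as a bounded-to-one map $\Psi$ and count fibers; these are interchangeable and yield the same constant up to trivia.
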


\proof\ Let $C_1, C_2, C_3$ be the constants in \eqref{regular}, \eqref{b-dis} and \eqref{b-cov}, respectively.  Recall that  $p_0$ is  such that \eqref{strong-primitive} holds.

At first we assume $0\le m\le p_0.$ If $w\prec v,$ by regularity and bounded variation of $\Phi,$ we have $|\phi_{n+m}(x_v)-\phi_n(x_w)|\le p_0C_1+C_2$.  Then by \eqref{num-xi}, 
\begin{equation}\label{sigma-w-m}
\exp(-p_0C_1-C_2)\le \sigma^w_{m}\le (\#\A)^{p_0}\exp(p_0C_1+C_2).
\end{equation}
Thus \eqref{s-m-w} holds for $\tilde C=(\#\A)^{p_0}\exp(2p_0C_1+2C_2).$

Next we assume $m> p_0$.  Fix any $e\in \mathcal{A}_{a_{n+p_0}}$ and define
$$
\begin{cases}
X_e=\{u\in \prod_{j=n+1}^{n+p_0-1}\mathcal{A}_{a_j}: wue \text{ admissible }\}\\
\tilde X_e=\{\tilde u\in \prod_{j=n+1}^{n+p_0-1}\mathcal{A}_{a_j}: \tilde w\tilde ue \text{ admissible }\}\\
Y_e=\{v\in \prod_{j=n+p_0+1}^{n+m}\mathcal{A}_{a_j}: ev\text{ admissible }\}.
\end{cases}
$$
Then  we have 
\begin{equation}\label{s-w-s-tilde-w}
\begin{cases}
\sigma^w_{m}&=\sum_{e\in \mathcal{A}_{a_{n+p_0}}}\sum_{v\in Y_e}\sum_{u\in X_e} \exp(\phi_{n+m}(x_{wuev})-\phi_n(x_w))\\
\sigma^{\tilde w}_m&=\sum_{e\in \mathcal{A}_{a_{n+p_0}}}\sum_{v\in Y_e}\sum_{\tilde u\in \tilde X_e} \exp(\phi_{n+m}(x_{\tilde w\tilde uev})-\phi_n(x_{\tilde w})).
\end{cases}
 \end{equation}
By Lemma \ref{connecting} and the definition of admissibility,   we have 
\begin{equation}\label{ulbd}
1\le \#X_e,\#\tilde X_e\le (\#\A)^{p_0}.
\end{equation}
 
For any  $u\in X_e$ and $v\in Y_e$ we have
\begin{eqnarray*}
&&\phi_{n+m}(x_{wuev})-\phi_n(x_w)\\
&=&\phi_{n+m}(x_{wuev})-\phi_n(x_{wuev})+\phi_n(x_{wuev})-\phi_n(x_w)\\
&\le &\phi_{n+m}(x_{wuev})-\phi_n(x_{wuev})+C_2\\
&=&\phi_{n+m}(x_{wuev})-\phi_{n+p_0}(x_{wuev})+\phi_{n+p_0}(x_{wuev})-\phi_n(x_{wuev})+C_2\\
&\le &\phi_{n+m}(x_{wuev})-\phi_{n+p_0}(x_{wuev})+p_0C_1+C_2,
\end{eqnarray*}
where  the second equation is due to bounded variation of $\Phi,$ the fourth equation is due to the fact that $\Phi$ is regular. 
By the same proof we get 
$$
\phi_{n+m}(x_{wuev})-\phi_n(x_w)\ge \phi_{n+m}(x_{wuev})-\phi_{n+p_0}(x_{wuev})-p_0C_1-C_2.
$$
 Similarly, for any  $\tilde u\in \tilde X_e$ and $v\in Y_e$ we have
 $$
 |\phi_{n+m}(x_{\tilde w\tilde uev})-\phi_n(x_{\tilde w})-\left(\phi_{n+m}(x_{\tilde w\tilde uev})-\phi_{n+p_0}(x_{\tilde w\tilde uev})\right)|\le p_0C_1+C_2.
 $$
 By  bounded covariation of $\Phi$, we have 
 \begin{equation}\label{simm}
 |\phi_{n+m}(x_{wuev})-\phi_n(x_w)-\left(\phi_{n+m}(x_{\tilde w\tilde uev})-\phi_n(x_{\tilde w})\right)|\le 2p_0C_1+2C_2+C_3.
 \end{equation}
 Define $C=(\#\A)^{p_0}\exp(2p_0C_1+2C_2+C_3),$ we get \eqref{s-m-w} by
combining \eqref{s-w-s-tilde-w}, \eqref{ulbd} and \eqref{simm}.

Now  for any fixed $w\in\Omega_{\alpha,n}$, by \eqref{s-m-w} we have 
\begin{eqnarray*}
\sigma_{n+m}&=&\sum_{u\in \Omega_{\alpha,n+m}} \exp(\phi_{n+m}(x_u))
=\sum_{\tilde w\in \Omega_{\alpha,n}}\sum_{u\in \Xi_{\tilde w,m}(\alpha)} \exp(\phi_{n+m}(x_u))\\
&=&\sum_{\tilde w\in \Omega_{\alpha,n}} \sigma^{\tilde w}_m \exp(\phi_n(x_{\tilde w}))
\le C\ \sigma^{w}_m\sum_{\tilde w\in \Omega_{\alpha,n}}  \exp(\phi_n(x_{\tilde w}))
=C \sigma^{w}_m\sigma_n.
\end{eqnarray*}
$\sigma_{n+m}\ge C^{-1}\sigma^{w}_m\sigma_n$ follows in the same way. 
\hfill $\Box$

\noindent{\bf Proof of Theorem \ref{exist-gibbs-like}.}\  For any $n\in \N$, we define a measure $\mu_n$ on the Borel $\sigma$-algebra generalized by the $n$-th cylinders as follows: for any $w\in \Omega_{\alpha,n},$ let 
$
\mu_n([w]_\alpha):={\exp(\phi_n(x_w))}/{\sigma_n}.
$
Assume $\mu$ is any weak-star limit of $\{\mu_n:n\ge1\}$.  Let us show that $\mu$ is a  Gibbs-like measure of $\Phi$.

Fix any $w\in \Omega_{\alpha,n}$ and $m\ge0$. By \eqref{sigma-n-m}, we have 
\begin{eqnarray*}
\mu_{n+m}([w]_\alpha)&=&\sum_{u\in\Xi_{w,m}(\alpha)}\mu_{n+m}([u]_\alpha)
=\sigma_{n+m}^{-1}\sum_{u\in \Xi_{w,m}(\alpha)} \exp(\phi_{n+m}(x_u))\\
&=&\sigma_{n+m}^{-1}\exp(\phi_n(x_w))\sum_{u\in \Xi_{w,m}(\alpha)} \exp(\phi_{n+m}(x_u)-\phi_n(x_w)) \\
&=&\sigma_{n+m}^{-1}\exp(\phi_n(x_w))\cdot \sigma^w_m
\sim_C \ \frac{\exp(\phi_n(x_w))}{\sigma_n}.
\end{eqnarray*}
 Notice that $[w]_\alpha$ is open and closed, since some subsequence  $\mu_{n+m_l}$ converge weakly to $\mu$, let $l\to\infty$ we get 
 $$
 \mu([w]_\alpha)\sim_C\  \frac{\exp(\phi_n(x_w))}{\sigma_n}.
 $$
Since $\Phi$ has bounded variation, the above property implies that $\mu$ is a Gibbs-like measure, and  the constant in \eqref{def-gibbs-like} only depends on $C_{\rm rg}(\Phi),$ $ C_{\rm bv}(\Phi), C_{\rm bc}(\Phi), \#\A$ and $p_0$.
\hfill $\Box$

\subsection{Exact-dimensional properties of Gibbs-like measure}

Fix  $\Psi\in {\mathcal{F}}_\alpha^-$ and define the weak Gibbs metric $d_\Psi$ on $\Omega_\alpha$ by \eqref{weak-gibbs}.    Gibbs-like measure behaves  well on the metric space $(\Omega_\alpha,d_\Psi)$  in the following sense:

\begin{thm}\label{exact-hp-dim}
 Fix  $\Phi\in{\mathcal{F}}_\alpha$ and let $\mu$ be a Gibbs-like measure of $\Phi,$ then $\mu$ is   exact upper and lower dimensional.
\end{thm}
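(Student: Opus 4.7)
My approach expresses both local dimensions as $\liminf$ and $\limsup$ of a single ratio of the potentials $\Phi$ and $\Psi$, shows the ratio is invariant under finite changes of $x$, and then argues that $\mu$ has a trivial tail, forcing both quantities to be $\mu$-a.e.\ constant.

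\emph{Step 1 (Reduction to a potential ratio).} By Lemma \ref{lem-weak-gibbs}(ii) and the argument of its part (iii), every closed ball $\overline B(x,r)$ in $(\Omega_\alpha,d_\Psi)$ is a cylinder $[x|_{n(x,r)}]_\alpha$ with $n(x,r)\uparrow\infty$ as $r\downarrow 0$, and its radius is comparable to $r_{x|_n}(\Psi)$. Bounded variation of $\Psi$ gives $r_{x|_n}(\Psi)\asymp e^{\psi_n(x)}$, while the Gibbs-like bound (\ref{def-gibbs-like}) plus bounded variation of $\Phi$ gives $\log\mu([x|_n]_\alpha)=\phi_n(x)-\log\sigma_n+O(1)$. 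Since $\Psi\in\mathcal F_\alpha^-$ means $\psi_n\downarrow-\infty$ pointwise on the compact space $\Omega_\alpha$, Dini's theorem yields uniform convergence $\psi_n\to -\infty$; the $O(1)$ errors are then negligible in the ratio, and
$$
\underline d_\mu(x)=\liminf_{n\to\infty}\frac{\phi_n(x)-\log\sigma_n}{\psi_n(x)},\qquad \overline d_\mu(x)=\limsup_{n\to\infty}\frac{\phi_n(x)-\log\sigma_n}{\psi_n(x)}.
$$

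\emph{Step 2 (Tail-invariance).} If $x,y\in\Omega_\alpha$ agree past position $k$, set $v=x_{k+1}\cdots x_n$ and apply bounded covariation to $w=x|_n=(x|_k)\cdot v$, $\tilde w=y|_n=(y|_k)\cdot v$: one obtains $|\phi_n(x)-\phi_n(y)|\le 2\|\phi_k\|_\infty+C_{\rm bc}(\Phi)$, a constant independent of $n$, and analogously for $\psi_n$. As the denominator tends to $-\infty$ uniformly, the $\liminf$ and $\limsup$ of the ratio in Step 1 agree at $x$ and $y$. Hence $\underline d_\mu$ and $\overline d_\mu$ are constant on every tail-equivalence class.

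\emph{Step 3 (Quasi-product structure and triviality of the tail).} Combining the Gibbs-like bound with Lemma \ref{connecting} and the estimate (\ref{s-m-w}), I would prove that $\mu$ has a quasi-product structure along any splitting of coordinates into a prefix of length $n$ and a suffix based on positions beyond $n+p_0$: there exists $K>1$ such that for every $w\in\Omega_{\alpha,n}$ and every cylinder event $C$ supported on coordinates $x_{n+p_0+1},x_{n+p_0+2},\ldots$,
$$
K^{-1}\mu([w]_\alpha)\mu(C)\le \mu([w]_\alpha\cap C)\le K\mu([w]_\alpha)\mu(C).
$$
The decomposition $[w]_\alpha\cap C=\bigsqcup_{b}[w\cdot b\cdot u]_\alpha$ over admissible bridges $b$ of length $p_0$ (supplied by Lemma \ref{connecting}), together with (\ref{s-m-w}) to split $\sigma_{n+m}$ into $\sigma_n$ times a factor comparable to $\sigma^w_m$ uniformly in $w$, and bounded covariation to make the suffix contribution essentially independent of $w$, delivers the estimate. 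The quasi-independence at arbitrarily distant coordinates forces every set in the tail $\bigcap_{n}\sigma(x_{n+1},x_{n+2},\ldots)$ to have $\mu$-measure $0$ or $1$. Combined with Step 2, the functions $\underline d_\mu,\overline d_\mu$ are $\mu$-a.e.\ constant, proving $\mu$ is exact upper and lower dimensional.

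\emph{Main obstacle.} The critical step is Step 3. In the classical setting of Gibbs measures on subshifts of finite type, mixing follows for free from shift-invariance of the measure and the Rokhlin--Sinai machinery; here $\mu$ is not shift-invariant and $\Omega_\alpha$ is non-autonomous, so the quasi-product bound must be extracted purely from the Gibbs-like definition, the strong primitive condition (through the $p_0$-bridge of Lemma \ref{connecting}), and bounded covariation. The most delicate point is controlling the multiplicative constant $K$ uniformly in $n$, $w$ and $C$, particularly keeping the bridge contribution and the normalizer $\sigma^w_{p_0+|u|}$ from producing non-uniform distortion.
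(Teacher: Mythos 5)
Your Steps 1 and 2 reproduce the decisive observation in the paper's own proof: the Gibbs-like bound \eqref{def-gibbs-like}, bounded variation, and bounded covariation of $\Phi$ and $\Psi$ together make $\underline{d}_\mu$ and $\overline{d}_\mu$ depend only on the tail of $x$. The paper exploits exactly this at the very end, after its Claim 2, to conclude $\underline{d}_{\mu}(\tilde x)=\underline{d}_{\mu}(\tilde y)$ from the shared tail of $\tilde x$ and $\tilde y$. Where you part ways with the paper is in how tail-invariance is turned into a.e.\ constancy. The paper does not prove a $0$--$1$ law; it argues by contradiction, locally. It uses the Besicovitch property of Lemma \ref{lem-weak-gibbs}(iii) plus the density theorem for Radon measures to pick density points $x\in X$, $y\in Y$, propagates the density to suitable descendants $v$ and $w$ via the Gibbs-like bound and regularity (its Claim 1), chooses $v,w$ to end in the same letter by Lemma \ref{connecting}, and then extracts (its Claim 2) a tail-equivalent pair $\tilde x\in [v]_\alpha\cap X$, $\tilde y\in[w]_\alpha\cap Y$ by a covariation computation. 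That is a localized, soft substitute for your global tail-triviality and sidesteps making a quasi-product bound precise.

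Your Step 3 has two genuine gaps. First, the quasi-product estimate is only described, not proven, and the delicate point is exactly the one you flag: the set of admissible $p_0$-bridges joining a prefix $w$ to a cylinder supported on far coordinates depends on $w$, and you must control both the number of bridges and their distortion uniformly in $n$, $w$, and the far cylinder. This is feasible using \eqref{s-m-w}, \eqref{sigma-n-m}, bounded covariation and Lemma \ref{connecting}, and should give a constant $K$ depending only on $C_{\rm rg},C_{\rm bv},C_{\rm bc},\#\A,p_0$, but the work is not done in the proposal. Second, ``quasi-independence forces the tail to be trivial'' is not automatic when the comparison constant $K$ exceeds $1$: a direct iteration of the bound only shows that tail events have measure in $\{0,1\}\cup[K^{-1},1-K^{-1}]$, which is not triviality. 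Triviality does follow, but one must invoke L\'evy's $0$--$1$ law: the uniform estimate $\mu(T\mid[w]_\alpha)\asymp_K\mu(T)$, combined with a.e.\ convergence of the conditional probabilities along the cylinder filtration to the indicator of $T$, forces that indicator to lie a.e.\ within a factor $K$ of $\mu(T)$, which is possible only if $\mu(T)\in\{0,1\}$. One also has to check that the tail-invariant Borel functions $\underline{d}_\mu,\overline{d}_\mu$ have genuinely tail-measurable level sets (tail-invariance plus the fact that the prefix-forgetting maps on $\Omega_\alpha$ are countable-to-one handles this). Filling in these two items makes your route succeed and in fact yields more than the theorem asserts, namely tail triviality of $\mu$; but the paper's density-point argument reaches the stated conclusion with considerably less machinery.
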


\proof\
At first we show that $\mu$ is   exact lower dimensional. We show it by contradiction. If $\mu$ is not  exact lower dimensional, then there exist two disjoint  Borel subsets $X,Y\subset \Omega_\alpha$ with $\mu(X), \mu(Y)>0$ and two numbers $d_1<d_2$ such that 
$$
 \underline{d}_{\mu}(x)\le d_1 \ (\forall x\in X)\ \ \ \text{ and }\ \ \ \underline{d}_{\mu}(y)\ge d_2 \ (\forall y\in Y).
$$
By Lemma  \ref{lem-weak-gibbs} (iii), the space $\Omega_\alpha$ has Besicovitch's covering property.
By the density property for Radon measure (see for example \cite{M} Chapter 2), there exist $x\in X$ and $y\in Y$ such that 
$$
\lim_{n\to\infty}\frac{\mu(X\cap [{x|_n}]_\alpha)}{\mu([{x|_n}]_\alpha)}=1\ \ \ \text{ and }\ \ \ \lim_{n\to\infty}\frac{\mu(Y\cap [y|_n]_\alpha)}{\mu([y|_n]_\alpha)}=1.
$$
Let $ C_1,C_2,C_3, C_4, C_5>0$ be the constants in \eqref{regular}, \eqref{b-dis}, \eqref{b-cov}, \eqref{def-gibbs-like} and  \eqref{sigma-n-m},   respectively.
Recall that  $p_0$ is such that \eqref{strong-primitive} holds.
By \eqref{sigma-w-m}, there exists a constant $C_6>0$ such that  $\sigma^w_{p_0}\le C_6$ for any $w\in \Omega_{\alpha,\ast}$. 

Fix $\delta>0$ such that  
\begin{equation}\label{def-delta}
\delta<\min\{\left((4C_4^4\exp(2C_2+C_3)+1)C_4^2C_{5}C_6\exp(p_0C_1+C_2)\right)^{-1},1/2\}.
\end{equation}
Assume $q$ is such that 
\begin{equation}\label{density}
\begin{cases}
\mu(X\cap [x|_{q}]_\alpha)\ge (1-\delta)\mu([x|_q]_\alpha), \\  
\mu(Y\cap [y|_q]_\alpha)\ge (1-\delta)\mu([y|_q]_\alpha).
\end{cases}
\end{equation}

\medskip

\noindent {\bf Claim 1:}  for any $v\in \Xi_{x|_q, p_0}(\alpha)$ and $w\in \Xi_{y|_q, p_0}(\alpha)$, 
\begin{equation}\label{density1}
\begin{cases}
\mu(X\cap [v]_\alpha)\ge (1-C\delta)\mu([v]_\alpha), \\  
\mu(Y\cap [w]_\alpha)\ge (1-C\delta)\mu([w]_\alpha),
\end{cases}
\end{equation}
where $C=C_4^2C_{5}C_6\exp(p_0C_1+C_2).$

\noindent $\lhd$ We only show the first inequality  of \eqref{density1}, since the proof of the second one is the same.
We have $[x|_q]_\alpha=\bigcup_{v\in \Xi_{x|_q, p_0}(\alpha)}[v]_\alpha$.  Define 
$$
\eta:=\max\left\{\frac{\mu([v]_\alpha\setminus X)}{\mu([v]_\alpha)}:v\in \Xi_{x|_q, p_0}(\alpha)\right\}.
$$
If $\eta=0,$  
the result holds trivially.  So we assume $\eta>0$ and  $\hat v\in \Xi_{x|_q, p_0}(\alpha)$ attains the maximum. 
Consequently by \eqref{density},
$$
\eta\mu([\hat v]_\alpha)= \mu([\hat v]_\alpha\setminus X)\le \mu([x|_q]_\alpha\setminus X)\le \delta \mu([x|_q]_\alpha).
$$
By Gibbs-like property of $\mu$,  we have 
\begin{eqnarray*}
\mu([\hat v]_\alpha)\ge C_4^{-1}\frac{\exp(\phi_{q+p_0}(x_{\hat v}))}{\sigma_{q+p_0}}\ \ \text{ and }\ \ \mu([x|_q]_\alpha)\le C_4\frac{\exp(\phi_{q}(x))}{\sigma_{q}}. 
\end{eqnarray*}
By \eqref{sigma-n-m} and \eqref{sigma-w-m}, we have   $ \sigma_{q+p_0}\le C_{5}\sigma_q\sigma^{x|_q}_{p_0}\le C_5C_6\sigma_q$. By  regularity and bounded variation of $\Phi,$ we have   
$$
|\phi_q(x)-\phi_{q+p_0}(x_{\hat v})|\le|\phi_q(x)-\phi_{q}(x_{\hat v})|+|\phi_q(x_{\hat v})-\phi_{q+p_0}(x_{\hat v})| \le C_2+ p_0C_1.
$$
Thus 
\begin{equation*}
\eta\le \delta \frac{\mu([x|_q]_\alpha)}{\mu([\hat v]_\alpha)}
\le C_4^2\frac{\sigma_{q+p_0}}{\sigma_q}\exp(\phi_q(x)-\phi_{q+p_0}(x_{\hat v}))\delta
\le C_4^2C_{5}C_6\exp(p_0C_1+C_2)\delta.
\end{equation*}
Then the result follows. 
 \hfill $\rhd$

By Lemma \ref{connecting}, we can find $v\in \Xi_{x|_q, p_0}(\alpha)$ and $w\in \Xi_{y|_q, p_0}(\alpha)$ such that  $v_{q+p_0}=w_{q+p_0}$. We fix such a pair $(v,w).$

\medskip

\noindent{\bf Claim 2:} There exist $\tilde x\in [v]_\alpha\cap X$ and $\tilde y\in [w]_\alpha\cap Y$ such that $\tilde x$ and $\tilde y$ have the same tail, i.e., there exist $l\ge q+p_0$ and $z\in \prod_{j=l+1}^\infty \A_{a_j}$ such that 
$\tilde x=\tilde x|_l\cdot  z$ and $\tilde y=\tilde y|_l\cdot z.$ 

\noindent$\lhd$ We show it by contradiction.  Assume for any  $\tilde x\in [v]_\alpha\cap X $
and any $\tilde y\in [w]_\alpha\cap Y$, $\tilde x$ and $\tilde y$ have no the same tail.

 By \eqref{density1} we have $\mu(X\cap [v]_\alpha)>(1-C\delta)\mu([v]_\alpha).$ Take a compact set $\hat X\subset   X\cap [v]_\alpha$ such that $\mu(\hat X)>(1-2C\delta)\mu([v]_\alpha).$ Consequently $\mu([v]_\alpha\setminus\hat X)\le 2C\delta\mu([v]_\alpha)$. Notice that $[v]_\alpha\setminus \hat X$ is an open set, thus it is a countable disjoint union of cylinders:
$
[v]_\alpha\setminus \hat X=\bigcup_{j\ge1} [{v w_j}]_\alpha,
$
where  different $w_j$ are non compatible. 
Thus we get 
\begin{equation}\label{2delta}
\sum_{j\ge 1} \frac{\mu([{v w_j}]_\alpha)}{\mu([v]_\alpha)}\le 2C\delta. 
\end{equation}

Since $v_{q+p_0}=w_{q+p_0}$  and  for any  $\tilde x\in [v]_\alpha\cap X $
and any $\tilde y\in [w]_\alpha\cap Y$, $\tilde x$ and $\tilde y$ have no the same tail, we must have 
\begin{equation}\label{inclusion}
[w]_\alpha\cap Y\subset \bigcup_{j\ge1} [{w w_j}]_\alpha.
\end{equation}
By the Gibbs-like property of $\mu,$ we have 
$$
\begin{cases}
\frac{\mu([v w_j]_\alpha)}{\mu([v]_\alpha)}&\ge C_4^{-2}\cdot\frac{\exp(\phi_{q+p_0+|w_j|}(x_{vw_j}))}{\exp(\phi_{q+p_0}(x_{v}))}\cdot\frac{\sigma_{q+p_0}}{\sigma_{q+p_0+|w_j|}}\\
\frac{\mu([w w_j]_\alpha)}{\mu([w]_\alpha)}&\le C_4^2\cdot\frac{\exp(\phi_{q+p_0+|w_j|}(x_{w w_j}))}{\exp(\phi_{q+p_0}(x_{w}))}\cdot\frac{\sigma_{q+p_0}}{\sigma_{q+p_0+|w_j|}}.
\end{cases}
$$
By  bounded variation and covariation of $\Phi$, 
\begin{eqnarray*}
&&\frac{\mu([w w_j]_\alpha)}{\mu([w]_\alpha)}/ \frac{\mu([v w_j]_\alpha)}{\mu([v]_\alpha)}\\
&\le&C_4^4\exp\Big(\phi_{q+p_0+|w_j|}(x_{w w_j})-\phi_{q+p_0}(x_{w})\\
&&-(\phi_{q+p_0+|w_j|}(x_{vw_j})-\phi_{q+p_0}(x_{v}))\Big)\\
&\le&C_4^4\exp(2C_2)\exp\Big(\phi_{q+p_0+|w_j|}(x_{ww_j})-\phi_{q+p_0}(x_{ww_j})\\
&&-(\phi_{q+p_0+|w_j|}(x_{vw_j})-\phi_{q+p_0}(x_{vw_j}))\Big)\\
&\le&C_4^4\exp(2C_2+C_3).
\end{eqnarray*}
Thus by \eqref{inclusion},  \eqref{2delta} and \eqref{def-delta}, we have
$$
\frac{\mu([w]_\alpha\cap Y)}{\mu([w]_\alpha)}\le \sum_{j\ge 1} \frac{\mu([{w w_j}]_\alpha)}{\mu([w]_\alpha)}\le 2CC_4^4\exp(2C_2+C_3)\delta\le (1-C\delta)/2,
$$
which contradicts with \eqref{density1}.
\hfill $\rhd$

\medskip

Claim 2   leads to a contradiction. Indeed  we have 
$$
\underline{d}_{\mu}(\tilde x)=\liminf_{n\to\infty}\frac{\log \mu([{\tilde x|_n}]_\alpha)}{\log {\rm diam}([{\tilde x|_n}]_\alpha)}\ \ \ \text{ and }\ \ \ \underline{d}_{\mu}(\tilde y)=\liminf_{n\to\infty}\frac{\log \mu([{\tilde y|_n}]_\alpha)}{\log{\rm diam}([{\tilde y|_n}]_\alpha)}.
$$
(We note that, due to \eqref{in-out-ball},  we can use cylinders to compute the lower local dimension.)
On one hand, by the Gibbs-like property of $\mu$, we have 
\begin{equation}\label{mass-n}
\mu([\tilde x|_n]_\alpha)\sim \frac{\exp(\phi_n(\tilde x))}{\sigma_n}\ \ \ \text{ and }\ \ \ \mu([\tilde y|_n]_\alpha)\sim \frac{\exp(\phi_n(\tilde y))}{\sigma_n}.
\end{equation}
 By the bounded  covariation of $\Phi$, 
$
|\phi_n(\tilde x)-\phi_l(\tilde x)-(\phi_n(\tilde y)-\phi_l(\tilde y))|\le C_{bc}(\Phi)
$ for any $n\ge l$.
Since $l$ is a fixed number, combine with \eqref{mass-n} we conclude that 
\begin{equation}\label{sim-mu}
\mu([\tilde x|_n]_\alpha)\sim \mu([\tilde y|_n]_\alpha).
\end{equation}
On the other hand, by  \eqref{diameter} and bounded variation of $\Psi$,  
$$
{\rm diam}([\tilde x|_n]_\alpha)\sim \exp(\psi_n(\tilde x))\ \  \ \text{ and }\ \ \ {\rm diam}([\tilde y|_n]_\alpha)\sim \exp(\psi_n(\tilde y)).
$$
 By the bounded covariation  of $\Psi$, we conclude that 
\begin{equation}\label{sim-diam}
{\rm diam}([\tilde x|_n]_\alpha)\sim {\rm diam}([\tilde y|_n]_\alpha).
\end{equation}
Combine \eqref{sim-mu} and \eqref{sim-diam} we get $\underline{d}_{\mu}(\tilde x)=\underline{d}_{\mu}(\tilde y).$ However since $\tilde x\in X$ and $\tilde y\in Y$, we also have $\underline{d}_{\mu}(\tilde x)\le d_1<d_2\le \underline{d}_{\mu}(\tilde y)$, which is a contradiction.

   Thus we conclude that $\mu$ is exact lower dimensional. 

The same proof shows that $\mu$ is also  exact upper dimensional.
\hfill $\Box$

\section{Structure of the spectrum of Sturm Hamiltonian }\label{sec-structure}

In this section, as  a preparation for the proof of the main result, we study the structure of  the spectrum of Sturm Hamiltonian.

 \subsection{Facts on continued fraction and Sturm sequence}
 
Recall that Gauss map $G:(0,1)\to (0,1)$ is defined as $ G(\alpha):=1/\alpha-[1/\alpha]$.
 Assume $\alpha$ has continued fraction expansion $[0;a_1,a_2,\cdots]. $
Let   $p_n/q_n (n>0)$ be the $n$-th approximation  of   $\alpha$, 
given by:
\begin{eqnarray}
\nonumber p_{-1}=1,&\quad p_0=0,&\quad p_{n+1}=a_{n+1} p_n+p_{n-1},\ n\ge 0,\\
\label{recur-q} q_{-1}=0,&\quad q_0=1,&\quad q_{n+1}=a_{n+1} q_n+q_{n-1},\ n\ge 0.
\end{eqnarray}
We also use the notations $a_n(\alpha), p_n(\alpha)$ and $q_n(\alpha)$ to  emphasize the dependence of these quantities on $\alpha.$ 

 Fix $M\in \N$ and  define a subset of $\mathscr{B}$ as 
  $$
  \mathscr{B}_M:=\{\alpha\in [0,1]\setminus \Q: \alpha=[0;a_1,a_2,\cdots] \text{ with } a_n\le M\}.
  $$
  It is seen that $\mathscr{B}=\bigcup_{M\in \N}\mathscr{B}_M.$
The following Lemma will be useful later.

\begin{lem}\label{a-a-q-n}
(i) Given $\alpha\in [0,1]\setminus\Q.$ Then for any $n,m\in \N,$
$$
q_n(\alpha)q_m(G^n(\alpha))\le q_{n+m}(\alpha)\le 2q_n(\alpha)q_m(G^n(\alpha)).
$$

(ii) If $\alpha\in \mathscr{B}_M$, then $q_n(\alpha)\le (M+1)^n.$
\end{lem}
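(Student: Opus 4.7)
The plan is to treat the two parts separately. Part (ii) is a straightforward induction: since $q_k(\alpha)$ is non-decreasing in $k$ (immediate from \eqref{recur-q} together with $a_{k+1}\ge 1$), the recurrence gives
\[
q_n(\alpha) \;=\; a_n q_{n-1}(\alpha) + q_{n-2}(\alpha) \;\le\; M q_{n-1}(\alpha) + q_{n-1}(\alpha) \;=\; (M+1)\, q_{n-1}(\alpha),
\]
and iterating from $q_0(\alpha)=1$ yields $q_n(\alpha)\le (M+1)^n$.

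For part (i), set $\beta := G^n(\alpha) = [0;a_{n+1},a_{n+2},\ldots]$. The key step is the concatenation identity
\[
q_{n+m}(\alpha) \;=\; q_n(\alpha)\, q_m(\beta) \,+\, q_{n-1}(\alpha)\, p_m(\beta).
\]
To prove it, I would invoke the standard matrix representation of convergents,
\[
\prod_{i=1}^{k} \begin{pmatrix} a_i & 1 \\ 1 & 0 \end{pmatrix} \;=\; \begin{pmatrix} q_k(\alpha) & q_{k-1}(\alpha) \\ p_k(\alpha) & p_{k-1}(\alpha) \end{pmatrix},
\]
which is a one-line induction on $k$ using \eqref{recur-q}. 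Splitting the product for $k=n+m$ between the $n$-th and $(n+1)$-th factors and recognizing the tail product as the analogous matrix for $\beta$, the $(1,1)$-entry of the resulting product gives exactly the displayed identity.

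Once the identity is in hand, both bounds fall out immediately. The lower bound $q_{n+m}(\alpha)\ge q_n(\alpha)q_m(\beta)$ is clear since $q_{n-1}(\alpha)p_m(\beta)\ge 0$. For the upper bound, I would combine $q_{n-1}(\alpha)\le q_n(\alpha)$ (the monotonicity used above) with $p_m(\beta)\le q_m(\beta)$ (a trivial induction on $m$ from the recurrence for $\beta$), yielding $q_{n+m}(\alpha)\le 2q_n(\alpha)q_m(\beta)$. The only issue requiring minor care is the boundary bookkeeping — when $m=0$ one uses $p_0(\beta)=0,q_0(\beta)=1$, and when $n=0$ one uses $q_{-1}(\alpha)=0$ — but no substantive obstacle arises; the entire argument is routine once the matrix identity is set up.
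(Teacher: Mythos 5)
Your proof is correct and follows essentially the same route as the paper: both rely on the $2\times 2$ matrix encoding of the convergent recursion, split the product of continued-fraction matrices at index $n$ to obtain a concatenation identity, and then bound the cross term by monotonicity. The only superficial difference is that the paper tracks $q_{m-1}(G^{n+1}\alpha)$ where you write $p_m(G^n\alpha)$ and the paper multiplies the matrices in the opposite order; since each $B_i$ is symmetric these are the same quantity, so the arguments coincide.
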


\begin{proof}
(i)\ 
Write $e_1=(1,0),   e_2=(0,1).$ For any $\beta\in [0,1]\setminus\Q$ and $n\in \N$, write 
 $$
   \ B_n(\beta)=
 \begin{pmatrix}
a_n(\beta)&1\\
1&0
\end{pmatrix}.
 $$
 By \eqref{recur-q} we have 
\begin{equation}\label{partial-quotient}
(q_n(\beta),q_{n-1}(\beta))^t=B_n(\beta)\cdots  B_1(\beta)\cdot e_1^t.
\end{equation}
 Since $B_1(\beta)\cdot e_2^t=e_1^t$ and $a_{n-1}(G(\beta))=a_n(\beta)$, we have 
 \begin{equation}\label{partial-quotient2}
(q_{n-1}(G(\beta)),q_{n-2}(G(\beta)))^t=B_{n}(\beta)\cdots B_{1}(\beta)\cdot e_2^t.
\end{equation}

Write $\hat \beta=G^n(\alpha)$. By \eqref{partial-quotient} and \eqref{partial-quotient2}, we have 
 \begin{eqnarray*}
q_{n+m}(\alpha)&=&e_1\cdot B_{n+m}(\alpha)\cdots B_{n+1}(\alpha)\cdot B_{n}(\alpha)\cdots B_{1}(\alpha)\cdot e_1^t\\
&=&e_1\cdot B_{m}(\hat \beta)\cdots B_{1}(\hat\beta)\cdot B_{n}(\alpha)\cdots B_{1}(\alpha)\cdot e_1^t\\
&=&e_1\cdot B_{m}(\hat \beta)\cdots B_{1}(\hat\beta)\cdot (q_n(\alpha),q_{n-1}(\alpha))^t\\
&=& q_n(\alpha)e_1\cdot B_{m}(\hat\beta)\cdots B_{1}(\hat \beta)\cdot e_1^t+\\
&&q_{n-1}(\alpha)e_1\cdot B_{m}(\hat\beta)\cdots B_{1}(\hat\beta)\cdot e_2^t\\
&=&q_n(\alpha) q_m(\hat\beta)+q_{n-1}(\alpha)q_{m-1}(G(\hat\beta)).
\end{eqnarray*}
By induction, it is ready to show that $0<q_{n-1}(\alpha)\le q_n(\alpha)$ and $0<q_{m-1}(G(\hat\beta))\le q_{m}(\hat\beta).$
Then the result follows.

(ii)\ It follows from \eqref{recur-q} and induction.
\end{proof}

For $\alpha\in [0,1]\setminus \Q$,  the {\it standard } Strum sequence $S_n(\alpha)$ is defined by 
$$
S_n(\alpha):=\chi_{[1-\alpha,1)}(n\alpha\pmod1),\ \ \ (n\in\Z).
$$ 
\begin{lem}\label{basic-sturm}
If $a_i(\alpha)=a_i(\beta)$ for $i=1,\cdots,n$, then $q_n(\alpha)=q_n(\beta)=:q_n$. Moreover
$
S_i(\alpha)=S_i(\beta)
$ 
for  $1\le i\le q_n.$
\end{lem}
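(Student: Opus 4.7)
The first part of the claim is elementary: the recurrence \eqref{recur-q} expresses $q_n(\alpha)$ as an explicit polynomial in $a_1(\alpha),\dots,a_n(\alpha)$, so an immediate induction on $n$ yields $q_n(\alpha)=q_n(\beta)$ whenever $a_i(\alpha)=a_i(\beta)$ for $1\le i\le n$.

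The substantive part is the Sturm-sequence statement, for which the plan is to exploit the classical $s$-adic structure of the standard Sturm word. Define
\begin{equation*}
W_k(\alpha):=S_1(\alpha)S_2(\alpha)\cdots S_{q_k(\alpha)}(\alpha)\in\{0,1\}^{q_k(\alpha)}.
\end{equation*}
The goal then reduces to showing that $W_k(\alpha)$ depends on $\alpha$ only through $a_1(\alpha),\dots,a_k(\alpha)$. I would prove this by induction on $k$, using the well-known decomposition
\begin{equation*}
W_{k+1}(\alpha)=W_k(\alpha)^{a_{k+1}(\alpha)}\,W_{k-1}(\alpha)\qquad(k\ge1),
\end{equation*}
with base cases $W_0(\alpha)$, $W_1(\alpha)$ which depend only on $a_1(\alpha)$. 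The induction step is immediate from this identity: if both $W_k$ and $W_{k-1}$ are determined by the first $k$ partial quotients, so is $W_{k+1}$.

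To justify the recursive identity, I would use the sharp approximation $\|q_k\alpha\|<1/q_{k+1}(\alpha)$ with sign $(-1)^k$. This means that the rotation by $q_k\alpha$ on $[0,1)$ moves every point by the minuscule amount $(-1)^k\|q_k\alpha\|$, and by the three-distance theorem the gaps between consecutive points in $\{i\alpha\bmod 1:1\le i\le q_k\}$ and the discontinuity point $1-\alpha$ are all at least $\|q_k\alpha\|$. Hence iterating the shift by $q_k$ positions $a_{k+1}$ times preserves the $0/1$ values $S_i(\alpha)$ for $1\le i\le q_k$, producing the prefix $W_k^{a_{k+1}}$; the remaining $q_{k-1}$ positions, which correspond to the decomposition $q_{k+1}=a_{k+1}q_k+q_{k-1}$, then form $W_{k-1}$ by a parallel argument. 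The identification $S_i(\alpha)=\lfloor(i+1)\alpha\rfloor-\lfloor i\alpha\rfloor$ of our indicator with the mechanical Sturm sequence is a convenient intermediate step.

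The main obstacle is the precise bookkeeping in verifying the substitutive identity, in particular handling the sign-dependent discrepancy $(-1)^k\|q_k\alpha\|$ between $S_{i+q_k}(\alpha)$ and $S_i(\alpha)$ and checking it never flips the indicator within the first $q_{k+1}$ indices. This is a classical result from Sturmian combinatorics, so in practice a short proof could simply cite the structure theorem for characteristic Sturmian words of irrational slope $\alpha$ and invoke it for the finite prefix of length $q_n$.
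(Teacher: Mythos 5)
Your proof of the first part matches the paper exactly (induction from \eqref{recur-q}). For the second part, the paper itself supplies no argument at all---it simply cites Allouche--Shallit and Lothaire. You instead reconstruct the classical argument those references contain: the three-term factorization of prefixes of the characteristic Sturmian word. That is the right and essentially the only route, and unpacking it is useful content.

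One caveat on the ``bookkeeping'' you flag, which is a real (but fixable) defect in the recursion as you wrote it. With $W_k(\alpha)=S_1(\alpha)\cdots S_{q_k(\alpha)}(\alpha)$, the identity
$$
W_{k+1}=W_k^{a_{k+1}}W_{k-1}
$$
fails at $k=1$ precisely when $a_1=1$: in that case $q_0=q_1=1$, so $W_0=W_1$ is a single letter $b$, and the right-hand side is $b^{a_2+1}$, whereas $W_2$ genuinely contains the new letter $S_2\neq b$ (for the golden mean $\alpha=[0;1,1,\dots]$ one gets $W_2=10$, not $11$). This is the usual wrinkle coming from the standard-word convention $s_{-1}=1,\ s_0=0$ with the special first step $s_1=s_0^{a_1-1}s_{-1}$; the prefixes $W_k$ are not literally the standard words $s_k$ (they can differ in the final two letters), even though both have length $q_k$. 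The fix is painless: verify directly that $W_0$, $W_1$, $W_2$ are explicit functions of $a_1,a_2$ (e.g.\ $W_1=0^{a_1-1}1$, and similarly for $W_2$), and run the induction from $k\ge 2$, where the factorization does hold. With that adjustment the argument via the three-distance theorem and $\|q_k\alpha\|<1/q_{k+1}$ goes through as you sketch, and the conclusion---that $W_n$ is a function of $a_1,\dots,a_n$ only---is exactly Lemma~\ref{basic-sturm}.
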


\begin{proof}
The first statement follows from \eqref{recur-q}. See  \cite{AS,Lo} for a proof of the second statement. 
\end{proof}

 \subsection{Structure and coding of the spectrum}

 \subsubsection{The structure of the spectrum} 
 
 We describe  the structure of the spectrum
$\Sigma_{\alpha,\lambda}$ for fixed  $\alpha\in[0,1]\setminus\Q$ and $\lambda>0$, for more
details, we refer to \cite{T,BIST,R,LW}.

Recall that the Sturm potential is given by 
$
v_n=\lambda\chi_{[1-\alpha,1)}(n\alpha+\theta\pmod 1).
$
Since $\Sigma_{\alpha,\lambda}$ is independent of the phase $\theta$, in the rest of the paper we will take $\theta=0.$ Thus $v_n=\lambda S_n(\alpha).$ Assume $\alpha$ has continued fraction expansion $[0;a_1,a_2,\cdots]$ and define $q_n$ by \eqref{recur-q}.
 For any $n\geq1$ and $x\in\mathbb{R}$, the transfer matrix $M_n(x)$
over $q_n$ sites is defined by
$${\mathbf M}_n(x):=
\left[\begin{array}{cc}x-v_{q_n}&-1\\ 1&0\end{array}\right]
\left[\begin{array}{cc}x-v_{q_n-1}&-1\\ 1&0\end{array}\right]
\cdots
\left[\begin{array}{cc}x-v_1&-1\\ 1&0\end{array}\right],
$$
By convention we  take
$$
\begin{array}{l}
{\mathbf M}_{-1}(x)= \left[\begin{array}{cc}1&-\lambda\\
0&1\end{array}\right]
\end{array}
\ \ \ \text{ and }\ \ \ 
\begin{array}{l}
{\mathbf M}_{0}(x)= \left[\begin{array}{cc}x&-1\\
1&0\end{array}\right].
\end{array}
$$

\smallskip

For $n\ge0$, $p\ge-1$, let $h_{(n,p)}(x):=\tr {\mathbf M}_{n-1}(x) {\mathbf M}_n^p(x)$ and
$$
\sigma_{(n,p)}:=\{x\in\mathbb{R}:|h_{(n,p)}(x)|\leq2\},
$$
 where  $\tr M$
stands for the trace of the matrix $M$. $\sigma_{(n,p)}$ is made of finitely many bands.
Moreover, for any $n\ge 0,$
$$
(\sigma_{(n+2,0)}\cup\sigma_{(n+1,0)})\subset
(\sigma_{(n+1,0)}\cup\sigma_{(n,0)})\ \text{ and }\ \Sigma_{\alpha,\lambda}=\bigcap_{n\ge0}(\sigma_{(n+1,0)}\cup\sigma_{(n,0)}).
$$
The intervals in  $\sigma_{(n,p)}$ are called  {\em bands}.  For any band 
$B$ of $ \sigma_{(n,p)}$,  $h_{(n,p)}(x)$ is monotone on
$B$ and
$h_{(n,p)}(B)=[-2,2].$
We call $h_{(n,p)}$ the {\em generating polynomial} of $B$ and denote it by $h_B:=h_{(n,p)}$.

$\{\sigma_{(n+1,0)}\cup\sigma_{(n,0)}:n\ge 0\}$ form a covering of $\Sigma_{\alpha,\lambda}$.
However there are some repetitions between $\sigma_{(n,0)}\cup\sigma_{(n-1,0)}$
and $\sigma_{(n+1,0)}\cup\sigma_{(n,0)}$. When $\lambda>4,$
it is possible to choose a covering of $\Sigma_{\alpha,\lambda}$ elaborately such that
we can get rid of these repetitions, as we will describe in the follows:

\begin{defi}{\rm (\cite{R,LW})}\label{def-4.3}
For $\lambda>4$, $n\ge0$, define three types of bands as:

$(n,I)$-type band: a band of $\sigma_{(n,1)}$ contained in a
band of $\sigma_{(n,0)}$;

$(n,II)$-type band: a band of $\sigma_{(n+1,0)}$ contained
in a band of $\sigma_{(n,-1)}$;

$(n,III)$-type band: a band of $\sigma_{(n+1,0)}$ contained
in a band of $\sigma_{(n,0)}$.
\end{defi}

All three types of bands actually occur and they are disjoint. We call these bands {\em
spectral generating bands of order $n$}. Note that  there 
are only two spectral generating bands of order $0$, 
one is $\sigma_{(0,1)}=[\lambda-2,\lambda+2]$ with  generating polynomial $h_{(0,1)}=x-\lambda$ and type $(0,I)$, the other is  $\sigma_{(1,0)}=[-2,2]$ with generating polynomial  $h_{(1,0)}=x$ and type $(0,III)$.

For any $n\ge0$, denote by $\B_n$ the set of spectral
generating bands of order $n$, then the intervals in $\B_n$ are disjoint.
Moreover (\cite{R,LW})
\begin{itemize}
\item $(\sigma_{(n+2,0)}\cup\sigma_{(n+1,0)})\subset
\bigcup_{B\in\B_n}B
\subset (\sigma_{(n+1,0)}\cup\sigma_{(n,0)})$,
thus
$$
\Sigma_{\alpha,\lambda}=\bigcap_{n\ge0}
\bigcup_{B\in\B_n}B.
$$
\item
any $(n,I)$-type band contains only one band in $\B_{n+1}$, which is of  $(n+1,II)$-type.
\item
any $(n,II)$-type band contains $2a_{n+1}+1$ bands in $\B_{n+1}$,
$a_{n+1}+1$ of which are of  $(n+1,I)$-type and $a_{n+1}$ of which are of  $(n+1,III)$-type.
\item
any $(n,III)$-type band contains $2a_{n+1}-1$ bands in $\B_{n+1}$,
$a_{n+1}$ of which are of  $(n+1,I)$-type and $a_{n+1}-1$ of which are of  $(n+1,III)$-type.
\end{itemize}

Thus $\{\B_n\}_{n\ge0}$ form a natural
covering of the spectrum $\Sigma_{\alpha,\lambda}$ (\cite{LW05,LPW07}). 

\begin{rem}\label{B-n}
{\rm
From the definition, $\B_0=\{[\lambda-2,\lambda+2], [-2,2]\}$. For $n\ge 1,$   $\B_n$ should depend on $\alpha$, i.e., $\B_n=\B_n(\alpha)$. On the other hand, by Definition \ref{def-4.3}, Lemma \ref{basic-sturm} and the fact that $v_k=\lambda S_k(\alpha),$  we conclude that $\B_n$ indeed only depends on $\vec{a}=a_1\cdots a_n$, where $\alpha=[0;a_1,a_2,\cdots]$. That is,  $\B_n=\B_n(\vec{a})$.
} 
\end{rem}

\subsubsection{The coding of the spectrum}

 In the following we give a coding of  the spectrum $\Sigma_{\alpha,\lambda}$ based on \cite{R,LQW,Q}.   Here we essentially follow  \cite{Q}.
 For each $n\in\N,$ define an alphabet $\A_n$ as 
\begin{equation*}\label{alphabet}
\A_n:=\{(I,j)_n:j=1,\cdots, n+1\}\cup \{II_n\}\cup \{(III,j)_n:j=1,\cdots, n\}.
\end{equation*}
Then $\#\A_n=2n+2. $
We order the elements in $\A_n$ as 
$$
(I,1)_n<\cdots <(I,n+1)_n<II_n<(III,1)_n<\cdots< (III,n)_n.
$$
To simplify the notation, we rename  the above line as 
$$
e_{n,1}<e_{n,2}<\cdots< e_{n,2n+2}.
$$
Given $e_{n,i}\in\A_n$ and $e_{m,j}\in \A_m$, we call $e_{n,i}e_{m,j}$  {\it admissible}, denote by $e_{n,i}\to e_{m,j},$ if 
\begin{eqnarray*}
(e_{n,i},e_{m,j})&\in& \{((I,k)_n,II_m): 1\le k\le n+1\}\cup \\
&&\{(II_n,(I,l)_m): 1\le l\le m+1\}\cup \\
&&\{(II_n,(III,l)_m): 1\le l\le m\}\cup\\
&&\{((III,k)_n,(I,l)_m):1\le k\le n,\ 1\le l\le m\}\cup \\
&&\{((III,k)_n,(III,l)_m):1\le k\le n, 1\le l\le m-1\}.
\end{eqnarray*}
For pair $(\A_n, \A_m),$ we define the incidence matrix $A_{nm}= (a_{ij})$ of size $(2n+2)\times(2m+2)$ as 
\begin{equation}\label{A_ij}
a_{ij}=
\begin{cases}
1& e_{n,i}\to e_{m,j},\\
0& \text{ otherwise}.
\end{cases}
\end{equation}
When $n=m,$ we write $A_n:=A_{nn}.$
For any $n\in \N$, 
we define an auxiliary matrix as follows:
$$
\hat A_n=
\left[\begin{array}{ccc}
0&1&0\\
n+1&0&n\\
n&0&n-1
\end{array}\right].
$$

Let us show that  $\{A_{nm}: n,m\ge1\}$ defined by \eqref{A_ij} is strongly primitive:

\begin{prop}\label{sturm-primitive}
For any $k\ge 6$ and any  sequence $a_1\cdots a_k\in \N^k$, the matrix 
$A_{a_1a_2}A_{a_2a_3}\cdots A_{a_{k-1}a_k}$ is positive. 
\end{prop}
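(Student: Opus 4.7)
The central observation is that the admissibility of a pair $e_{n,i}\to e_{m,j}$ depends only on the type $\tau(e_{n,i})\in\{I,II,III\}$ of the source (as defined by the three classes $(I,\cdot)_n,\,II_n,\,(III,\cdot)_n$) and on the specific target $e_{m,j}$; this is immediate from the list preceding \eqref{A_ij}. Consequently, in $A_{nm}$ all rows indexed by source symbols of the same type coincide, and this row-symmetry is preserved under matrix products, so the $(i,j)$-entry of $A_{a_1a_2}\cdots A_{a_{k-1}a_k}$ depends only on $\tau(i)$ and on the specific $j$. My plan is to reformulate positivity as the statement that, for every starting type $T_1\in\{I,II,III\}$, every sequence $a_1,\ldots,a_k\in\N^k$, and every $j\in\A_{a_k}$, there is an admissible walk $w_1\to\cdots\to w_k=j$ with $\tau(w_1)=T_1$.

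The first reduction is to peel off the last edge $w_{k-1}\to j$. Inspecting the admissibility rules, this forces a constraint on $\tau(w_{k-1})$: if $j=II_{a_k}$, then $\tau(w_{k-1})=I$ is forced; if $j$ is one of the two extremal symbols $(I,a_k+1)_{a_k}$ or $(III,a_k)_{a_k}$ (the ones reachable only from a type-$II$ source), then $\tau(w_{k-1})=II$ is forced; otherwise $\tau(w_{k-1})\in\{II,III\}$ suffices. The task is then to construct the first $k-2$ edges so that $\tau(w_{k-1})$ lies in the prescribed set.

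This reduces the whole problem to a reachability analysis in the reduced type graph on $\{I,II,III\}$ whose edges are $I\to II$, $II\to I$, $II\to III$, $III\to I$ (always present) together with $III\to III$ (present at step $r$ only when $a_{r+1}\ge 2$). The worst case is $a_i\equiv 1$, in which $III\to III$ never occurs. A direct iteration of the reachable set from type $III$ gives $\{III\},\{I\},\{II\},\{I,III\},\{I,II\},\{I,II,III\}$ at steps $1,\ldots,6$; from $I$ or from $II$ all three types are already reached by step $5$ and persist thereafter. Hence for $k\ge 6$, at the penultimate step $k-1\ge 5$, the reachable set from any starting type contains $\{I,II\}$, and these two types together meet every last-edge constraint listed above.

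It remains to lift the resulting type-level walk $T_1\to T_2\to\cdots\to T_{k-1}$ to an admissible symbol walk by choosing, at each intermediate position, any $w_r\in\A_{a_r}$ of type $T_r$ that extends $w_{r-1}$ admissibly; this is always possible because admissibility is type-determined on the source side and the worst-case type walk was built without using any $III\to III$ edge, so the only patterns that could fail, namely $III\to(I,a_{r+1}+1)_{a_{r+1}}$ or $III\to(III,a_{r+1})_{a_{r+1}}$, are avoided by not routing through extremal symbols. The main obstacle is the sharpness of $k\ge 6$: the critical case is $k=6$ with all $a_i=1$ and starting type $III$, where the penultimate reachable set is only the two-element set $\{I,II\}$, and one must verify that these two types genuinely cover every last-edge requirement. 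A direct computation confirms that $A_1^4$ already has zero entries in the $(III,1)$-row at the two extremal columns, so $k=5$ really fails and the hypothesis $k\ge 6$ is tight.
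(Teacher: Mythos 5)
Your proposal is correct, and it takes a genuinely different route from the paper's. The paper proves the case $k=6$ by writing out, for each of the six source/target type combinations, an explicit connecting word of the form $e\ \ast\ \ast\ \ast\ \ast\ \hat e$ (always using the low-index representatives $(I,1)$, $II$, $(III,1)$ at internal positions), and then handles $k>6$ by the observation that every $A_{nm}$ has no zero row or column, so multiplying a positive product by further factors keeps it positive. You instead isolate the structural fact that admissibility $e_{n,i}\to e_{m,j}$ depends only on the type of the source and on the specific target, collapse the problem to a reachability analysis on the 3-vertex type graph with edges $I\to II$, $II\to I$, $II\to III$, $III\to I$ (and $III\to III$ only when $a_{r+1}\ge 2$), iterate reachable sets to show that at step $k-1\ge 5$ the set $\{I,II\}$ is always available, note that $\{I,II\}$ covers every last-edge constraint (targets $II_m$ need source type $I$, extremal targets need $II$, all others need $II$ or $III$), and lift the type walk to a symbol walk via non-extremal representatives. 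I checked your worst-case iteration ($\{III\},\{I\},\{II\},\{I,III\},\{I,II\},\{I,II,III\}$ from type $III$) and it is correct, as is the persistence of $\{I,II,III\}$ once reached and the monotonicity argument justifying the reduction to $a_i\equiv 1$. The two arguments share the same hidden engine — type-determined admissibility — but the paper never states it and instead just exhibits concrete words, while your version makes the structure explicit. What your approach buys: a single uniform argument covering all $k\ge 6$ without a separate ``extend from $k=6$'' step, and, more interestingly, a proof that the hypothesis is sharp (the reachable set from $III$ at step $4$ is only $\{I,III\}$, and indeed $A_1^4$ has zeros in the $(III,1)_1$-row at the two extremal columns, which I verified by direct computation). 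One small remark: in your lifting paragraph the phrase ``the worst-case type walk was built without using any $III\to III$ edge'' is doing no real work — what matters is simply that for every type edge $T_r\to T_{r+1}$ in the reduced graph, a non-extremal symbol of type $T_{r+1}$ is reachable from any symbol of type $T_r$, and that admissibility from the chosen $w_{r+1}$ onward depends only on $\tau(w_{r+1})=T_{r+1}$; this is what lets the lift proceed step by step.
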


\begin{proof}
At first we show the result for $k=6.$ Fix any sequence $a_1\cdots a_6\in \N^6$
and  any  $e\in \A_{a_1}$ and $\hat e\in \A_{a_6}$, we only need  to show that there is an admissible word $e_1\cdots e_6\in \prod_{j=1}^6\A_{a_j}$ such that $e_1=e$ and $e_6=\hat e.$  We construct the desired admissible word as follows.

If $e=(I,j)_{a_1}$ and $\hat e=(I,l)_{a_6}$ or $(III,l)_{a_6}$, take
$$
e_1\cdots e_6=e\ II_{a_2}(III,1)_{a_3}(I,1)_{a_4}II_{a_5} \hat e.
$$ 

If $e=(I,j)_{a_1}$ and $\hat e=II_{a_6}$, take
$$
e_1\cdots e_6=e\ II_{a_2}(I,1)_{a_3}II_{a_4}(I,1)_{a_5} \hat e.
$$ 

If $e=II_{a_1}$ and $\hat e=(I,l)_{a_6}$ or $(III,l)_{a_6}$, take
$$
e_1\cdots e_6=e\ (I,1)_{a_2}II_{a_3}(I,1)_{a_4}II_{a_5} \hat e.
$$ 

If $e=II_{a_1}$ and $\hat e=II_{a_6}$, take
$$
e_1\cdots e_6=e\ (III,1)_{a_2}(I,1)_{a_3}II_{a_4}(I,1)_{a_5} \hat e.
$$ 

If $e=(III,j)_{a_1}$ and $\hat e=(I,l)_{a_6}$ or $(III,l)_{a_6}$, take
$$
e_1\cdots e_6=e\ (I,1)_{a_2}II_{a_3}(I,1)_{a_4}II_{a_5} \hat e.
$$ 

If $e=(III,j)_{a_1}$ and $\hat e=II_{a_6}$, take
$$
e_1\cdots e_6=e\ (I,1)_{a_2}II_{a_3}(III,1)_{a_4}(I,1)_{a_5} \hat e.
$$ 

For general $k$, we note that for any $n,m\in \N$, each column and each array of $A_{nm}$ has at least one $1$. Then the result follows easily from the case $k=6$.
\end{proof}

We also define $\A_0:=\{I, III\}$ and write $e_{0,1}=I$ and $e_{0,2}=III$.  Fix $m\in \N.$ Given $e_{0,i}\in \A_0$ and $e_{m,j}\in \A_{m}$, we call $e_{0,i}e_{m,j}$  {\it admissible}, denote by $e_{0,i}\to e_{m,j},$ if 
\begin{eqnarray*}
(e_{0,i},e_{m,j})&\in& \{(I,II_m)\}\cup 
\{(III,(I,l)_m):1\le l\le m\}\cup \\
&&\{(III,(III,l)_m):1\le l\le m-1\}.
\end{eqnarray*}
For pair $(\A_0, \A_m),$ we define the incidence matrix $A_{0m}= (a_{ij})$ of size $2\times(2m+2)$ as 
$$
a_{ij}=
\begin{cases}
1& e_{0,i}\to e_{m,j},\\
0& \text{ otherwise}.
\end{cases}
$$

For any $\vec{a}=a_1\cdots a_n\in\N^n$ and any $0\le m\le n,$ define 
$$
\Omega_m^{(\vec{a})}:=\{w_0\cdots w_m\in \A_0\times\prod_{j=1}^m\A_{a_j}: w_j\to w_{j+1} \}.
$$

If $\alpha=[0;a_1,a_2,\cdots]$, define a symbolic space  $\Omega^{(\alpha)}$ as    
\begin{eqnarray*}
\Omega^{(\alpha)}=\{x_0x_1x_2\cdots \in \A_0\times\prod_{j=1}^\infty \A_{a_j}: x_j\to x_{j+1} \}.
\end{eqnarray*}
For $x\in \Omega^{(\alpha)}$, we write $x|_n:=x_0\cdots x_n.$ More generally we write $x[n, n+k]$ for $x_n\cdots x_{n+k}.$ 
Define $\Omega^{(\alpha)}_{n}:=\{x|_n: x\in \Omega^{(\alpha)}\}$ and $\Omega^{(\alpha)}_{\ast}=\bigcup_{n} \Omega^{(\alpha)}_{n}.$ It is seen that $\Omega^{(\alpha)}_{n}=\Omega^{(\vec{a})}_{n}$, where $\vec{a}=a_1\cdots a_n.$   Given $w=w_0\cdots w_n\in \Omega^{(\alpha)}_{n}$, denote by $|w|$ the length of $w$, then $|w|=n+1$.  If the last letter $w_n=(T,j)_{a_n}$, then we write $t_w=T,$ and say that  $w$ has {\it  type } $T.$  If $w=uw^\prime$, then we say $u$ is a {\it prefix } of $w$ and denote by $u\prec w.$
 For $x,y\in \Omega^{(\alpha)}$, we denote by $x\wedge y$ the maximal common prefix of $x$ and $y.$
Given $w\in \Omega^{(\alpha)}_{n},$
we define the cylinder 
$$
[w]^\alpha:=\{x\in \Omega^{(\alpha)}: x|_n=w\}.
$$

 In the following we explain that $\Omega^{(\alpha)}$ is a coding of the spectrum $\Sigma_{\alpha,\lambda}$. 
 
 At first we fix $\vec{a}=a_1\cdots a_n\in \N^n$ and code the intervals in $\B_n(\vec{a})$  by $\Omega_n^{(\vec{a})}$ (see Remark \ref{B-n} for the notation $\B_n(\vec{a})$). Write $\B_0(\vec{a}):=\B_0$. For any $1\le m\le n$, write $\B_m(\vec{a}):=\B_m(a_1\cdots a_m)$.
 Define 
$B_{I}:=[\lambda-2,\lambda+2]$  and  $B_{III}:=[-2,2]$, then 
$
\B_0(\vec{a})=\B_0=\{B_w: w\in \Omega_0^{(\vec{a})}\}.
$
Assume $B_w$ is defined for any $w\in  \Omega_{m-1}^{(\vec{a})}$.
Now for any $w\in\Omega_m^{(\vec{a})}$, write $w=w^\prime e=w^\prime(T,j)_{a_m}$. Then 
define $B_w$ to be the unique $j$-th  $(m,T)$ type band in $\B_m(\vec{a})$ which is contained in $B_{w^\prime}$.  Then 
$
\B_m(\vec{a})=\{B_w: w\in\Omega_m^{(\vec{a})}\}.
$
By induction, we code all the bands in $\B_n(\vec{a})$ by $\Omega_n^{(\vec{a})}.$ 

 Now we can define a natural projection $\pi_\alpha:\Omega^{(\alpha)}\to \Sigma_{\alpha,\lambda}$ as 
\begin{equation}\label{pi-alpha}
\pi_\alpha(x):=\bigcap_{n\ge 0} B_{x|_n}.
\end{equation} 
 It is seen that $\pi_\alpha$ is a bijection, thus  $\Omega^{(\alpha)}$  is a coding of $\Sigma_{\alpha,\lambda}$.

\begin{rem}
 {\rm
$\Omega^{(\alpha)}$ is not exactly the symbolic space $\Omega_\alpha$ defined in Sect. \ref{sec-space-potential}, indeed  it is an  ``augment" of $\Omega_\alpha$ with alphabet sequence $\{\A_0, \A_{a_1},\A_{a_2}\cdots\}$ and  incidence matrix sequence $\{A_{0a_1}, A_{a_1a_2},\cdots\}$.  However, since $0$ only appears at the first place, by checking the proof, all the results developed in Sect. \ref{sec-gibbs-like} still holds on the symbolic space $\Omega^{(\alpha)}.$ We denote the class of  regular (negative) potentials on $\Omega^{(\alpha)}$  with bounded variation and covariation by $\mathcal{F}^\alpha (\mathcal{F}^\alpha_-)$.
 }
\end{rem}

We also need the following facts later.
\begin{lem}\label{auxi}
(i) For any sequence $a_1\cdots a_5\in \{1,\cdots,M\}^5$, we have
$$
\begin{pmatrix}
1&1&1\\
1&1&1\\
1&1&1
\end{pmatrix}
\le \hat  A_{a_1}\cdots \hat A_{a_5}\le 81(M+1)^5
\begin{pmatrix}
1&1&1\\
1&1&1\\
1&1&1
\end{pmatrix}.
$$ 

(ii) If $\alpha=[0;a_1,a_2,\cdots]$, then 
$$
\#\Omega^{(\alpha)}_n=(1,0,1)\cdot \hat A_{a_1}\cdots \hat A_{a_n}\cdot (1,1,1)^t.
$$
\end{lem}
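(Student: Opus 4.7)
The plan is to treat part (ii) first, as a direct combinatorial identity, and then to use it as motivation for the matrix inequalities in part (i). For (ii), I read $\hat A_m$ as a transfer matrix that counts, for each incoming band type $T\in\{I,II,III\}$ (indexing rows and columns in the order $I,II,III$), the number of outgoing bands of each type $T'$ produced in the next generation when the relevant partial quotient equals $m$. The three rules quoted before Remark~\ref{B-n} give exactly this: an $(n,I)$-band produces one $(n+1,II)$-band; an $(n,II)$-band produces $a_{n+1}+1$ type-$I$ and $a_{n+1}$ type-$III$ children; and an $(n,III)$-band produces $a_{n+1}$ type-$I$ and $a_{n+1}-1$ type-$III$ children. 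Since $\B_0=\{B_I,B_{III}\}$ provides exactly one band of type $I$, zero of type $II$ and one of type $III$, the row vector counting bands of each type in $\B_n(\vec a)$ equals $(1,0,1)\,\hat A_{a_1}\cdots\hat A_{a_n}$; summing against $(1,1,1)^t$ gives $\#\B_n(\vec a)$. The coding $w\mapsto B_w$ is a bijection from $\Omega^{(\alpha)}_n$ onto $\B_n(\vec a)$, so the right-hand side is exactly $\#\Omega^{(\alpha)}_n$.

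For the upper bound in (i), every entry of $\hat A_m$ lies in $\{0,1,m-1,m,m+1\}$ and is therefore bounded by $M+1$ when $m\le M$. Letting $J$ be the $3\times 3$ all-ones matrix, this gives the entrywise domination $\hat A_m\le (M+1)J$. Multiplying five such inequalities and using $J^k=3^{k-1}J$ yields $\hat A_{a_1}\cdots\hat A_{a_5}\le (M+1)^5J^5=81(M+1)^5J$, as desired.

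For the lower bound, the key observation is the entrywise monotonicity $\hat A_m\ge \hat A_1$ for every $m\ge 1$, visible term by term from the definition. Since all matrix factors are non-negative, monotonicity propagates under multiplication, giving $\hat A_{a_1}\cdots\hat A_{a_5}\ge \hat A_1^5$. The lemma then reduces to a single explicit calculation: iterating $\hat A_1=\left(\begin{smallmatrix}0&1&0\\ 2&0&1\\ 1&0&0\end{smallmatrix}\right)$ five times yields $\hat A_1^5=\left(\begin{smallmatrix}4&4&1\\ 9&4&4\\ 4&1&2\end{smallmatrix}\right)$, which indeed dominates $J$. The only mildly delicate point is the choice of the exponent $5$: shorter products such as $\hat A_1^2$, $\hat A_1^3$ and $\hat A_1^4$ still have zero entries, so $5$ is the sharp value at which even the worst case $\hat A_1$ becomes strictly positive. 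This is the natural $3$-state analog of the strongly primitive threshold recorded in Proposition~\ref{sturm-primitive}, and once the $\hat A_1^5$ computation is done no further ingredients are required.
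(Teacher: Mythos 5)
Your proof is correct and matches the paper's (terse) argument in both parts: for (i) the paper likewise observes $\hat A_1\le\hat A_a\le(M+1)J$ entrywise and appeals to ``direct computation,'' which is exactly your reduction to checking $\hat A_1^5\ge J$ and $J^5=81J$ (your value $\hat A_1^5=\left(\begin{smallmatrix}4&4&1\\9&4&4\\4&1&2\end{smallmatrix}\right)$ is correct), and for (ii) the paper's ``definition and induction'' is precisely the transfer-matrix bookkeeping you spell out, with $(1,0,1)$ coming from $\B_0$ and the bijection $w\mapsto B_w$ giving $\#\Omega^{(\alpha)}_n=\#\B_n$.
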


\begin{proof}
(i) Notice that if $1\le a\le M,$ then $\hat A_1\le \hat A_a\le (M+1)J$, where $J$ is the matrices with all entries equal to $1$. Then the result follows by direct computation.

(ii) It follows from the definition and induction.
\end{proof}

\subsection{Other useful facts of Sturm Hamiltonian}

We collect some useful results of Sturm Hamiltonian for later use. 
Fix $\alpha=[0;a_1,a_2,\cdots]$ and consider the operator $H_{\alpha,\lambda,0}$.
We write $h_k(x):=h_{k+1,0}(x)={\tr M_k(x)}$ and
\begin{equation}\label{sigmak}
\sigma_k:=\sigma_{(k+1,0)}=\{x\in\R : h_k(x)|\le 2\}.
\end{equation} 
We note that $\sigma_k$ consists of $q_k(\alpha)$ disjoint intervals.

The following two lemmas are implicitly   proven in \cite{R}:

\begin{lem}\label{sigma-k}
$\sigma_k=\{B\in \B_k(\alpha): B \text{ is of  type } (k,II) \text{ or } (k,III)\}$. 
\end{lem}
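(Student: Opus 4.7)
The plan is a routine verification from Definition \ref{def-4.3} together with the standard covering fact $\sigma_{(k+1,0)}\subset\sigma_{(k,0)}\cup\sigma_{(k,-1)}$ recorded in \cite{R,LW}. Two inclusions are needed.

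For $\supset$, the argument is immediate: by Definition \ref{def-4.3}, any $(k,II)$- or $(k,III)$-type band is by construction a band of $\sigma_{(k+1,0)}=\sigma_k$, so their union lies in $\sigma_k$. For $\subset$, I plan to take an arbitrary band $B$ of $\sigma_{(k+1,0)}$ and show that $B$ sits inside either some band of $\sigma_{(k,0)}$ (making it $(k,III)$-type) or some band of $\sigma_{(k,-1)}$ (making it $(k,II)$-type). The pointwise inclusion $\sigma_{(k+1,0)}\subset\sigma_{(k,0)}\cup\sigma_{(k,-1)}$ I would derive from the Sturmian recursion $M_{k+1}(x)=M_{k-1}(x)M_k(x)^{a_{k+1}}$ together with the $SL_2$ trace identity $\tr(AB)+\tr(AB^{-1})=\tr(A)\tr(B)$; applied with $A=M_{k-1}$, $B=M_k^p$, this yields the Chebyshev-type recursion
$$h_{(k,p+1)}(x)=h_k(x)\,h_{(k,p)}(x)-h_{(k,p-1)}(x),$$
whose iteration from $p=0$ up to $p=a_{k+1}$ expresses $h_{(k+1,0)}(x)=\tr M_{k+1}(x)$ as a polynomial in $h_k(x)$, $h_{(k,0)}(x)$ and $h_{(k,-1)}(x)$, forcing at least one of $|h_{(k,0)}(x)|\le 2$, $|h_{(k,-1)}(x)|\le 2$ whenever $|h_{(k+1,0)}(x)|\le 2$. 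Since $B$ is connected and its generating polynomial $h_{(k+1,0)}$ is monotone on $B$ with $h_{(k+1,0)}(B)=[-2,2]$, the whole band $B$ must then sit inside a single parent band of $\sigma_{(k,0)}\cup\sigma_{(k,-1)}$, producing the required classification.

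The main (mild) obstacle is ensuring that $B$ does not straddle the gap between two distinct parent bands; a priori the component bands of $\sigma_{(k,0)}$ and $\sigma_{(k,-1)}$ can interlace. I plan to dispose of this by invoking the band-counting consequences of Definition \ref{def-4.3} listed in the four bulleted items preceding Remark \ref{B-n}: those bullets account for every band of $\sigma_{(k+1,0)}$ exactly once as either $(k,II)$- or $(k,III)$-type, which combined with the pointwise inclusion above forces the desired single-parent-band containment, completing the proof.
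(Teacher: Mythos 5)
The paper offers no proof of this lemma --- it is cited as ``implicitly proven in'' Raymond's preprint \cite{R} --- so there is nothing internal to compare against, and I evaluate your argument on its own terms. The trace-recursion paragraph contains a genuine gap. First, an index slip: by the definition $h_{(n,p)}=\tr{\mathbf M}_{n-1}{\mathbf M}_n^p$, one has $h_{(k+1,0)}=\tr{\mathbf M}_k=h_k$, not $\tr{\mathbf M}_{k+1}$; iterating $h_{(k,p+1)}=h_kh_{(k,p)}-h_{(k,p-1)}$ from $p=0$ to $p=a_{k+1}$ produces $h_{(k,a_{k+1})}=\tr{\mathbf M}_{k+1}=h_{(k+2,0)}$, one level off from the set $\sigma_k=\sigma_{(k+1,0)}$ you are trying to cover. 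More importantly, expressing one trace as a polynomial combination of others does not by itself ``force'' any inequality. The only algebraic constraint available is the Fricke invariant $h_{(k,-1)}^2+h_{(k,0)}^2+h_{(k+1,0)}^2-h_{(k,-1)}h_{(k,0)}h_{(k+1,0)}-4=\lambda^2$, and for $\lambda>4$ this yields $\sigma_{(k,-1)}\cap\sigma_{(k,0)}\cap\sigma_{(k+1,0)}=\emptyset$, not the inclusion $\sigma_{(k+1,0)}\subset\sigma_{(k,0)}\cup\sigma_{(k,-1)}$: the triple $(10,-10,2)$ satisfies the invariant with $\lambda=20$ and has $|h_{(k+1,0)}|\le 2$ while $|h_{(k,-1)}|=|h_{(k,0)}|=10$. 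In \cite{R} the inclusion is obtained from an inductive interlacing analysis of the generating polynomials, which is essentially equivalent in difficulty to the present lemma, not a free consequence of the trace recursion.

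Your fallback idea is the right one, but as phrased it is circular: the bullets do not assert that they ``account for every band of $\sigma_{(k+1,0)}$ exactly once as $(k,II)$- or $(k,III)$-type'' --- that is the lemma itself. What the bullets do give is a recursion for the type counts $I_k,II_k,III_k$ of spectral generating bands of order $k$: $II_{k+1}=I_k$, $I_{k+1}=(a_{k+1}+1)II_k+a_{k+1}III_k$, $III_{k+1}=a_{k+1}II_k+(a_{k+1}-1)III_k$, with $I_0=III_0=1$, $II_0=0$, and a short induction gives $I_k=q_k(\alpha)$ and $II_k+III_k=q_k(\alpha)$. Since every $(k,II)$- or $(k,III)$-type band is, by Definition \ref{def-4.3}, a band of $\sigma_{(k+1,0)}=\sigma_k$, since the paper records that the three types are disjoint, and since $\sigma_k$ consists of exactly $q_k(\alpha)$ bands, those $q_k(\alpha)$ pairwise disjoint bands must exhaust $\sigma_k$. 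This closes the proof with no need for the pointwise inclusion or the single-parent discussion; if you rewrite around this explicit counting step and drop the trace-identity paragraph, the argument is complete.
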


Define $v^I=(1,0,0), v^{II}=(0,1,0), v^{III}=(0,0,1)$ and $v_\ast=(0,1,1)^T.$

\begin{lem}\label{number-n-m}
Given $w\in  \Omega_n^{(\alpha)}$,  then 
\begin{equation*}\label{num}
\#\{u\in \Omega_{n+m}^{(\alpha)}:w\prec u, t_u=II,III\}=v^{t_w}\cdot \hat A_{a_{n+1}}\cdots \hat A_{a_{n+m}} \cdot v_\ast.
\end{equation*}
\end{lem}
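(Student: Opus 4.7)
The plan is to prove the identity by induction on $m$, interpreting the rows of $\hat A_a$ exactly as the transition rules of type-$I$/$II$/$III$ bands listed in Section~\ref{sec-structure}. Concretely, for each $w\in\Omega^{(\alpha)}_n$ and each $T\in\{I,II,III\}$, set
$$
N^T_m(w):=\#\{u\in\Omega^{(\alpha)}_{n+m}:w\prec u,\ t_u=T\},\qquad N_m(w):=\bigl(N^I_m(w),N^{II}_m(w),N^{III}_m(w)\bigr),
$$
so that the quantity we want to compute equals $N_m(w)\cdot v_\ast$. The claim will follow once we show $N_m(w)=v^{t_w}\cdot\hat A_{a_{n+1}}\cdots\hat A_{a_{n+m}}$.

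For the base case $m=0$, the only descendant of $w$ of generation $n$ is $w$ itself, whence $N_0(w)=v^{t_w}$, agreeing with the empty product of matrices. For the induction step, partition the descendants of $w$ at generation $n+m$ according to their (unique) parent $u$ at generation $n+m-1$:
$$
N^T_m(w)=\sum_{u:\,w\prec u,\,|u|-1=n+m-1}\#\{e\in\A_{a_{n+m}}:ue\text{ admissible},\ t_e=T\}.
$$
The three bullet points recalled after Definition~\ref{def-4.3} give precisely the number of type-$I$/$II$/$III$ children of a type-$I$/$II$/$III$ parent when $a_{n+m}=a$:
a type-$I$ parent produces $0$, $1$, $0$; a type-$II$ parent produces $a+1$, $0$, $a$; a type-$III$ parent produces $a$, $0$, $a-1$. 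These are the three rows of $\hat A_{a_{n+m}}$. Grouping descendants by the type of the parent $u$ therefore yields
$$
N_m(w)=N_{m-1}(w)\cdot \hat A_{a_{n+m}},
$$
and the induction hypothesis closes the loop: $N_m(w)=v^{t_w}\cdot\hat A_{a_{n+1}}\cdots\hat A_{a_{n+m}}$. Multiplying on the right by $v_\ast=(0,1,1)^t$ sums the type-$II$ and type-$III$ entries, yielding the stated formula.

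The only non-routine point is checking that the bullet-list "children count" translates into the rows of $\hat A_a$ without cross-interactions between siblings. This is clean because in the symbolic space $\Omega^{(\alpha)}$ each admissible continuation $e\in\A_{a_{n+m}}$ of a parent word $u$ depends only on the last letter of $u$ (i.e.\ on $t_u$), and two distinct parents at level $n+m-1$ with $w\prec u$ produce disjoint sets of level-$(n+m)$ descendants under $w$. Hence the recursion is exactly matrix multiplication by $\hat A_{a_{n+m}}$, and no further combinatorial care is required.
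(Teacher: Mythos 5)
Your proof is correct. The paper itself gives no argument for this lemma --- it merely cites Raymond's preprint as the implicit source --- so there is nothing to compare against, but the induction you set up is the natural (essentially the only) way to establish the identity. Setting $N_m(w)=(N^I_m(w),N^{II}_m(w),N^{III}_m(w))$, the base case $N_0(w)=v^{t_w}$ is trivial; the one-step transition counts from the bullet list after Definition~\ref{def-4.3} (or, equivalently, from the admissibility rules defining $A_{nm}$) are exactly the rows of $\hat A_a$, which gives $N_m(w)=N_{m-1}(w)\hat A_{a_{n+m}}$; and right-multiplication by $v_\ast=(0,1,1)^t$ extracts $N^{II}_m(w)+N^{III}_m(w)$. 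The observation that distinct parents at level $n+m-1$ have disjoint sets of children and that the child count depends only on the parent's type (via the last letter) is what makes the linear recursion valid, and you state it correctly.
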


The following theorem is \cite{FLW} Theorem 5.1, see also \cite{LQW} Theorem 3.3.
\begin{thm}[Bounded covariation]\label{bco}
Assume $\lambda>20$ and  $\alpha, \beta\in \mathscr{B}_M$. Then there exists 
constant $\eta>1$ only depending on $\lambda$ and $M$  such
that if $w, wu\in  \Omega_\ast^{(\alpha)}$ and $\tilde w, \tilde wu\in  \Omega_\ast^{(\beta)}$,
then  
$$\eta^{-1} \frac{|{B}_{wu}|}{|{B}_{w}|}\le
\frac{|B_{\widetilde wu}|}{|B_{\widetilde w}|}\le \eta
\frac{|{B}_{wu}|}{|{B}_{w}|}.$$
\end{thm}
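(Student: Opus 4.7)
The plan is to reduce the band-length ratio $|B_{wu}|/|B_w|$ to a quantity depending, up to a multiplicative constant determined only by $\lambda$ and $M$, on the combinatorial data of $u$, the partial quotients indexing the alphabets of $u$, and the type $t_w$ of the last letter of $w$. Since $u$ being admissible after both $w\in\Omega^{(\alpha)}_\ast$ and $\tilde w\in\Omega^{(\beta)}_\ast$ forces the partial quotients to coincide, namely $a_{n+i}=b_{\tilde n+i}$ for $1\le i\le |u|$, and since only finitely many last-letter types are possible, the desired two-sided comparison will then follow.

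I would proceed in three steps. First, on any generating band $B$ the polynomial $h_B$ is strictly monotone and satisfies $h_B(B)=[-2,2]$. For $\lambda>20$, the Raymond-type hyperbolicity estimates underlying \cite{R} and \cite{LW} yield uniform bounded distortion of $h_B'$ on $B$, with constants depending only on $\lambda$ and $M$, so that $|B|\sim 1/|h_B'(x)|$ for any interior point $x$. This reduces the problem to comparing derivatives $|h_{B_{wu}}'(x)|/|h_{B_w}'(x)|$ at suitable interior points.

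Second, I would express $h_{B_w}$ (up to sign) as the trace of a product of $q_n(\alpha)$ transfer matrices determined by the Sturm sequence of $\alpha$, and use the Sturm substitution structure to write the transfer product associated with $wu$ as a left multiplication of the product associated with $w$ by a further matrix product whose factors involve only the potential entries dictated by $a_{n+1},\ldots,a_{n+|u|}$. Differentiating the resulting trace and invoking uniform hyperbolicity of the transfer cocycle for $\lambda>20$, one splits $h_{B_{wu}}'$, up to a bounded multiplicative error, into a factor depending on $w$ only through $t_w$ and a factor depending only on $u$ and on the partial quotients $a_{n+1},\ldots,a_{n+|u|}$. The crucial point here is that strong hyperbolicity forces the expanding direction of the cocycle to lie in a narrow cone independent of the past, so the contribution of the left multiplication decouples from the history encoded by $w$.

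Third, the admissibility of $wu$ and $\tilde w u$ in the two symbolic spaces forces $a_{n+i}=b_{\tilde n+i}$ for $1\le i\le|u|$, so the $u$-dependent factor is literally identical on both sides. The two remaining type-dependent factors, one indexed by $t_w$ and one by $t_{\tilde w}$, may differ, but the first letter of $u$ restricts both $t_w$ and $t_{\tilde w}$ to a common finite set of admissible predecessors (at most three possibilities), and a direct case analysis based on the transfer-matrix estimates for each admissible type yields a bounded ratio between them. Choosing $\eta$ to absorb all accumulated constants gives the claim. The main obstacle is the derivative factorization in the second step: it requires quantitative control of the angle between the stable and unstable cone fields of the transfer cocycle, which is precisely where the hypothesis $\lambda>20$ enters and which is carried out in \cite{R,LQW,FLW}.
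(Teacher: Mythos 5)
The paper does not actually prove this theorem; it states that it is \cite{FLW} Theorem 5.1 (see also \cite{LQW} Theorem 3.3), and adds the remark that although \cite{FLW} only treats the case $\alpha=\beta$, the same argument gives the two-frequency version because the quantities involved depend on the frequency only through the partial quotients that index the letters of $u$. Your observation in step three --- that $wu\in\Omega^{(\alpha)}_\ast$ and $\tilde w u\in\Omega^{(\beta)}_\ast$ force $a_{n+i}=b_{\tilde n+i}$ for $1\le i\le |u|$, so the ``future'' contribution is literally identical for $\alpha$ and $\beta$ --- is precisely the remark the paper makes, and it is the genuine content needed to upgrade \cite{FLW} from $\alpha=\beta$ to $\alpha\ne\beta$. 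So at the level of logical structure your plan matches what the paper relies on.

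That said, your step two is imprecise in a way worth flagging. The generating polynomial of $B_{wu}$ is $h_{(n+|u|,p')}=\tr\mathbf{M}_{n+|u|-1}\mathbf{M}_{n+|u|}^{p'}$, and the relation to $h_{B_w}=h_{(n,p)}$ passes through the recursion $\mathbf{M}_{k+1}=\mathbf{M}_k^{a_{k+1}}\mathbf{M}_{k-1}$ at every intermediate level; this is not a single ``left multiplication'' of the $w$-block by a $u$-block. More substantively, the claimed factorization of $h'_{B_{wu}}$ ``up to a bounded multiplicative error into a factor depending on $w$ only through $t_w$ and a factor depending only on $u$'' is not a step you can deduce directly from cone-field hyperbolicity; it is essentially a restatement of the bounded-covariation conclusion itself. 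In \cite{FLW} this is obtained by an induction on the trace-polynomial hierarchy together with bounded-distortion and curvature estimates for the $h_{(n,p)}$ on their bands (and the hypothesis $\lambda>20$ enters through the curvature lower bound $|h|\ge\lambda-8>12$ needed to run this induction), not by a one-shot cocycle decoupling. You do acknowledge at the end that this is the crux and defer to \cite{R,LQW,FLW}, which is honest, but one should not present the decoupling as if it followed formally from hyperbolicity; it is the technical heart of those references. With that caveat, your sketch is a faithful high-level account of the argument the paper is invoking.
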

We remark that  in \cite{FLW},  only  the case $\alpha=\beta$ is considered. However by checking the proof, the same argument indeed shows the  stronger result as stated in Theorem \ref{bco}. 

The following lemma is a direct consequence of \cite{FLW} Corollary 3.1:
\begin{lem}
Let $\lambda>20$ and $\alpha\in \mathscr{B}_M$. 
Then there exists 
constant $0<c=c(\lambda,M)<1$ such that for any $w\in  \Omega_n^{(\alpha)},$
\begin{equation}\label{band-length}
c|B_{w|_{n-1}}|\le|B_{w}|\ \  \text{ and }\ \  |B_w|\le 2^{2-n}.
\end{equation}
\end{lem}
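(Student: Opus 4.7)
The plan is to derive both inequalities from derivative estimates on the trace polynomials $h_B$ restricted to bands, combined with the bounded covariation Theorem \ref{bco}. The key observation throughout is that for each spectral generating band $B$, the generating polynomial $h_B$ maps $B$ bijectively and monotonically onto $[-2,2]$, so by the mean value theorem $|B| = 4/|h_B'(\xi)|$ for some $\xi \in B$.

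For the upper bound $|B_w| \le 2^{2-n}$, I would proceed by induction on $n$. The base case $n \le 2$ follows from the explicit description of $\mathcal{B}_0$ and $\mathcal{B}_1$. For the inductive step, the trace-map type recursion for $h_{(n,p)}$ together with $\lambda > 20$ should give that at each pass from generation $n-1$ to generation $n$, the derivative of the generating polynomial on the relevant band is amplified by at least a factor of $2$. Concretely, using $|h_k| \le 2$ on the deeper bands and the recursion $h_{k+1} \approx (x - v_{q_{k+1}}) h_k - h_{k-1}$ plus analogous identities for type-$I$ and type-$III$ bands, the large coupling yields $\inf_{B_w}|h_{B_w}'| \ge 2 \cdot \inf_{B_{w|_{n-1}}}|h_{B_{w|_{n-1}}}'|$, iterating to $\inf|h_{B_w}'| \ge 2^n$, and hence $|B_w| \le 4 \cdot 2^{-n} = 2^{2-n}$.

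For the lower bound $c|B_{w|_{n-1}}| \le |B_w|$, I would invoke Theorem \ref{bco}. Fix a reference frequency $\tilde\alpha$ of constant type (say $\tilde\alpha = [0;1,1,\ldots]$, or whichever gives the cleanest reference), and observe that for that reference the ratios $|B_{\tilde w \tilde u}|/|B_{\tilde w}|$ where $\tilde u$ is a single letter range over a finite set of values (at most $2\tilde a + 1 \le 2M+1$ children per parent, each controlled by explicit derivative computations). Hence these ratios admit a uniform positive lower bound $c_0 = c_0(\lambda,M) > 0$. Theorem \ref{bco} then transports this lower bound to arbitrary $w \in \Omega_\ast^{(\alpha)}$ with $\alpha \in \mathscr{B}_M$, producing $c = c_0/\eta > 0$ depending only on $\lambda$ and $M$.

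The main obstacle is the derivative estimate in the first step. The threshold $\lambda > 20$ is sharp in the sense that one needs enough hyperbolicity in the trace-map dynamics to overcome competing terms; carefully tracking the constants so that the amplification factor is exactly $2$ (rather than some weaker constant greater than $1$) is technical. Fortunately, this is precisely the content of \cite{FLW} Corollary 3.1, so the work reduces to reading off the explicit constants from that reference and combining them with the elementary monotonicity argument sketched above.
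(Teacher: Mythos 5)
The paper does not actually prove this lemma; it asserts it as ``a direct consequence of \cite{FLW} Corollary 3.1'' and cites it. Your instinct to ultimately defer to \cite{FLW} is therefore consistent with the paper, and your reconstruction of the upper-bound mechanism (mean value theorem, amplification of $|h_B'|$ at each generation in the strongly coupled regime) is the right flavor. However, the reconstruction of the lower bound has concrete problems.

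First, there is a likely circularity. You invoke Theorem \ref{bco}, which is \cite{FLW} Theorem 5.1, in order to derive what the paper identifies as \cite{FLW} Corollary 3.1. In \cite{FLW} the corollary precedes the theorem, so if Theorem 5.1 is proved using the band-length estimates of Corollary 3.1 (which is plausible, since bounded covariation of band ratios is typically built on top of uniform contraction estimates), your argument would be circular. The paper avoids this by citing Corollary 3.1 directly rather than rederiving it from later results.

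Second, a single constant-type reference frequency does not suffice. Theorem \ref{bco} compares $|B_{wu}|/|B_w|$ with $|B_{\tilde w u}|/|B_{\tilde w}|$ for the \emph{same} suffix $u$, and this forces the alphabet sequence of $\beta$ at positions $|\tilde w|+1,\dots,|\tilde w|+|u|$ to agree with that of $\alpha$ at positions $|w|+1,\dots,|w|+|u|$. For $w=w|_{n-1}\,u$ with $u\in\A_{a_n}$, the reference $\tilde w$ must therefore live where $\A_{a_n}$ occurs, so $\tilde\alpha=[0;1,1,\dots]$ only covers $a_n=1$. You need one reference configuration for each value of $a_n\in\{1,\dots,M\}$ and each possible type of $w_{n-1}$; fortunately this is a finite family, so the fix is straightforward.

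Third, the assertion that the ratios $|B_{\tilde w \tilde u}|/|B_{\tilde w}|$ ``range over a finite set of values'' is false even for a fixed constant-type reference: the ratios depend on $\tilde w$ and take infinitely many values as $\tilde w$ runs through $\Omega_\ast^{(\tilde\alpha)}$. What you actually need, and what Theorem \ref{bco} can provide once you fix one short reference word per configuration, is a uniform positive lower bound, not finiteness. After these three corrections the outline becomes sound, but as written the lower-bound argument does not close.
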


 The following result is also useful later.
 
 \begin{lem}\label{compare-q-n}
 $q_n(\alpha)\le \#\Omega_n^{(\alpha)}\le 5q_n(\alpha)$.
 \end{lem}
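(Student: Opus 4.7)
The plan is to exploit the branching rules for the spectral generating bands stated in Section 4.2.1 together with Lemma~\ref{sigma-k}, which identifies $q_n(\alpha)$ as the count of type-$II$ and type-$III$ bands in $\mathcal B_n(\alpha)$.

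For $T\in\{I,II,III\}$, let
\begin{equation*}
N_T^{(n)}:=\#\{w\in\Omega^{(\alpha)}_n:t_w=T\}.
\end{equation*}
Since $\sigma_n$ consists of $q_n(\alpha)$ disjoint intervals and, by Lemma~\ref{sigma-k}, $\sigma_n$ is the disjoint union of the type-$(n,II)$ and type-$(n,III)$ bands of $\mathcal B_n(\alpha)$, the coding identifies these bands with admissible words of the corresponding type. Hence $q_n(\alpha)=N_{II}^{(n)}+N_{III}^{(n)}$, which already yields the lower bound
\begin{equation*}
q_n(\alpha)=N_{II}^{(n)}+N_{III}^{(n)}\le\#\Omega^{(\alpha)}_n.
\end{equation*}

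For the upper bound, I would read off from the four bullet points in Section 4.2.1 the transition rule
\begin{equation*}
N_{II}^{(n+1)}=N_I^{(n)},\qquad N_I^{(n+1)}=(a_{n+1}+1)N_{II}^{(n)}+a_{n+1}N_{III}^{(n)},
\end{equation*}
which in particular gives
\begin{equation*}
N_I^{(n+1)}\le(a_{n+1}+1)\bigl(N_{II}^{(n)}+N_{III}^{(n)}\bigr)=(a_{n+1}+1)q_n(\alpha).
\end{equation*}
Combined with the continued-fraction recurrence $q_{n+1}(\alpha)=a_{n+1}q_n(\alpha)+q_{n-1}(\alpha)\ge a_{n+1}q_n(\alpha)$ and $q_n(\alpha)\le q_{n+1}(\alpha)$ (valid for $n\ge 1$), this gives $N_I^{(n+1)}\le 2q_{n+1}(\alpha)$ and therefore
\begin{equation*}
\#\Omega^{(\alpha)}_{n+1}=N_I^{(n+1)}+q_{n+1}(\alpha)\le 3q_{n+1}(\alpha)\le 5q_{n+1}(\alpha).
\end{equation*}
The initial case $n=0$ is checked by hand from $\mathcal B_0=\{[\lambda-2,\lambda+2],[-2,2]\}$: there $\#\Omega^{(\alpha)}_0=2$ and $q_0(\alpha)=1$, so $\#\Omega^{(\alpha)}_0=2q_0(\alpha)\le 5q_0(\alpha)$.

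There is no real obstacle here; the only thing to be slightly careful with is the edge case $n=0$ and making sure the branching rule for the bullet on type-$(n,III)$ bands (which contributes $a_{n+1}-1$ type-$III$ children, possibly zero when $a_{n+1}=1$) is read correctly. A slicker alternative, which I would mention parenthetically, is to use Lemma~\ref{auxi}(ii) together with the observation $(1,0,1)\hat A_{a_1}\cdots\hat A_{a_n}(1,1,1)^t\le(1,1,1)\hat A_{a_1}\cdots\hat A_{a_n}(1,1,1)^t$ and the fact that the $q_n(\alpha)$ arises as $(0,1,1)\hat A_{a_1}\cdots\hat A_{a_n}(1,1,1)^t$ (from Lemma~\ref{number-n-m} with $n=0$), but the direct counting argument above is the most transparent.
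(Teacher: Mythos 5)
Your main argument is correct and follows essentially the same route as the paper: both decompose $\Omega_n^{(\alpha)}$ into the type-$II,III$ words, whose count is $q_n(\alpha)$ by Lemma \ref{sigma-k}, and the type-$I$ words, bounded via the branching rules and the recurrence \eqref{recur-q}; you obtain the slightly tighter intermediate bound $\#\Omega_n^{(\alpha)}\le 3q_n(\alpha)$ whereas the paper records the looser $q_n(\alpha)+4a_nq_{n-1}(\alpha)$, and both give $\le 5q_n(\alpha)$. One small slip in your parenthetical alternative: by Lemma \ref{number-n-m} applied at base level $0$ and summed over $w\in\{I,III\}$, it is $(1,0,1)\,\hat A_{a_1}\cdots\hat A_{a_n}\,(0,1,1)^t$ that equals $q_n(\alpha)$, not $(0,1,1)\,\hat A_{a_1}\cdots\hat A_{a_n}\,(1,1,1)^t$, and the componentwise inequality you cite does not by itself deliver the factor-$5$ comparison (one would need to control ratios of the matrix-product entries), so that side route would require more work — but this does not affect your primary proof, which is sound.
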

 
 \begin{proof}
 By the definition of $\B_n(\alpha),$ we know that 
 \begin{eqnarray*}
 \B_n(\alpha)&=&\{B_w: w\in\Omega_n^{(\alpha)} \}\\
 &=&\{B_w: w\in\Omega_n^{(\alpha)}; t_w=II,III\}\cup\{B_w: w\in\Omega_n^{(\alpha)}; t_w=I\} \\
 &=&\{B_w: w\in\Omega_n^{(\alpha)}; t_w=II,III\}\cup\\
 &&\{B_{u(I,j)_{a_n}}: u\in\Omega_{n-1}^{(\alpha)}; t_u=II, 1\le j\le a_n+1\}\cup\\
 &&\{B_{u(I,j)_{a_n}}: u\in\Omega_{n-1}^{(\alpha)}; t_u=III, 1\le j\le a_n\}.
 \end{eqnarray*}
  Recall that $\sigma_n$ is the $n$-th approximation of $\Sigma_{\alpha,\lambda}$ (see \eqref{sigmak}), which is made of $q_n(\alpha)$ bands.
 Combine with Lemma \ref{sigma-k} and \eqref{recur-q}, we have 
 $$
 q_n(\alpha)\le \#\B_n(\alpha)=\#\Omega_n^{(\alpha)}\le q_n(\alpha)+4a_nq_{n-1}(\alpha)\le 5q_n(\alpha).
 $$
 \end{proof}

\section{Exact-dimensional property of DOS}\label{sec-proof-main}

In this section, we apply the result in Sect. \ref{sec-gibbs-like} to prove Theorem \ref{main}. At first, we construct a potential $\Psi^\alpha\in \mathcal{F}_{-}^\alpha$ and define  the related weak Gibbs metric $d_\alpha=d_{\Psi^\alpha}$ such that  $\pi_\alpha$ is a bi-Lipschitz homeomorphism between $(\Omega^{(\alpha)},d_\alpha)$
and $(\Sigma_{\alpha,\lambda},|\cdot|)$. Then we construct a potential $\Phi^\alpha\in \mathcal{F}^\alpha$ such that $(\pi_\alpha)_\ast(\mu_\alpha)\asymp \NN_{\alpha,\lambda}$, where $\mu_\alpha$ is a Gibbs-like measure of $\Phi^\alpha.$ From this, we conclude that $\NN_{\alpha,\lambda}$ is exact upper and lower dimensional. In the last subsection, we construct a frequency $\alpha\in \mathscr{B}_2$, such that $\NN_{\alpha,\lambda}$ is not exact-dimensional when $\lambda$ is large enough.

Throughout the first two subsections, we always fix $\alpha\in \mathscr{B}$ and $\lambda>20.$  Choose $M\in \N$  such that $\alpha\in \mathscr{B}_M.$


\subsection{Weak Gibbs metric on $\Omega^{(\alpha)}$}

Define  $\Psi^\alpha=\{\psi_n^\alpha: n\ge 1\}$ on $\Omega^{(\alpha)}$ as 
 \begin{equation}\label{def-Psi}
 \psi_n^\alpha(x):=\log |B_{x|_{n}}|.
 \end{equation}

\begin{lem}  \label{bdvar-Psi}
$\Psi^\alpha\in  \F^\alpha_-$  with related constants  $C_{\rm rg}(\Psi^\alpha), C_{\rm bv}(\Psi^\alpha), C_{\rm bc}(\Psi^\alpha)$  only depending on $\lambda$ and $M$. 
\end{lem}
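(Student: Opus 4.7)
The plan is to verify the four defining conditions of $\F^\alpha_-$ (continuity/negativity, regularity, bounded variation, bounded covariation) in turn, reading each off from facts already collected about the band structure. Since $\psi_n^\alpha(x)=\log|B_{x|_n}|$ depends on $x$ only through the finite prefix $x|_n$, the function $\psi_n^\alpha$ is constant on cylinders of the corresponding depth, hence continuous, and the bounded-variation constant $C_{\rm bv}(\Psi^\alpha)$ can be taken to be $0$. Negativity follows from the universal upper bound $|B_w|\le 2^{2-n}$ in \eqref{band-length}, which forces $\psi_n^\alpha(x)\to-\infty$ for every $x\in\Omega^{(\alpha)}$; together with the nesting $B_{x|_n}\subset B_{x|_{n-1}}$ this convergence is monotone.

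For regularity, the same nesting gives $\psi_n^\alpha\le \psi_{n-1}^\alpha$ pointwise, and the lower bound $|B_{x|_n}|\ge c\,|B_{x|_{n-1}}|$ from \eqref{band-length} (with $c=c(\lambda,M)$) iterated from $m$ to $n$ yields $|B_{x|_n}|\ge c^{n-m}|B_{x|_m}|$ whenever $n\ge m$. Taking logs gives
\[
0\ \ge\ \psi_n^\alpha(x)-\psi_m^\alpha(x)\ \ge\ (n-m)\log c
\]
uniformly in $x$, so $\|\psi_n^\alpha-\psi_m^\alpha\|_\infty\le |\log c|\cdot|n-m|$ and $C_{\rm rg}(\Psi^\alpha)$ can be taken to be $|\log c|$, which depends only on $\lambda$ and $M$.

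The most substantive condition, and the one where the heavy lifting has been done elsewhere, is bounded covariation; this is essentially the content of Theorem \ref{bco}, applied with $\alpha=\beta$. Given $w=uv$ and $\tilde w=\tilde u v$ in $\Omega^{(\alpha)}_\ast$, for any $x\in[w]^\alpha$ and $\tilde x\in[\tilde w]^\alpha$ one has $\psi_{|w|}^\alpha(x)-\psi_{|u|}^\alpha(x)=\log(|B_w|/|B_u|)$ and similarly for the tilded quantities, because $\psi_n^\alpha$ depends only on the prefix. Theorem \ref{bco} then directly gives
\[
\bigl|(\psi_{|w|}^\alpha(x)-\psi_{|u|}^\alpha(x))-(\psi_{|\tilde w|}^\alpha(\tilde x)-\psi_{|\tilde u|}^\alpha(\tilde x))\bigr|\ \le\ \log\eta,
\]
so $C_{\rm bc}(\Psi^\alpha)\le \log\eta$, with $\eta=\eta(\lambda,M)$.

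There is no real obstacle beyond matching the abstract definitions of $\F^\alpha_-$ to the already-established geometric estimates: every required constant is packaged either in \eqref{band-length} or in Theorem \ref{bco}, and both produce constants depending only on $\lambda$ and $M$, as required. The only point worth care is that the definitions of regularity, bounded variation, and bounded covariation from Section \ref{sec-space-potential} were stated on $\Omega_\alpha$; the remark following Lemma \ref{compare-q-n} asserts they transfer verbatim to the augmented space $\Omega^{(\alpha)}$, so no reformulation is needed.
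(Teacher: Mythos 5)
Your proof is correct. The bounded-variation, bounded-covariation, and negativity parts coincide with the paper's: bounded variation is $0$ because $\psi_n^\alpha$ depends on $x$ only through $x|_n$; bounded covariation is an immediate specialization of Theorem \ref{bco} with $\alpha=\beta$; negativity follows from the nesting $B_{x|_{n+1}}\subset B_{x|_n}$ together with $|B_w|\le 2^{2-n}$.

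Where you diverge from the paper is the regularity estimate, and your route is in fact simpler and sharper. You iterate the one-step comparison $c|B_{w|_{n-1}}|\le|B_w|$ from \eqref{band-length} to get $|B_{x|_n}|\ge c^{n-m}|B_{x|_m}|$ directly, which combined with the monotone nesting gives $\|\psi_n^\alpha-\psi_m^\alpha\|_\infty\le|n-m|\,|\log c|$ with $C_{\rm rg}(\Psi^\alpha)=-\log c$. The paper instead fixes a short reference word $u\in\Omega_0^{(\alpha)}\cup\Omega_1^{(\alpha)}$ of the same type as $x|_n$, appends the suffix $w=x_{n+1}\cdots x_{n+k}$ to it, bounds $|B_u|$ and $|B_{uw}|$ via \eqref{band-length}, and then uses Theorem \ref{bco} to transfer the ratio estimate from the reference word back to $x$, landing at the larger constant $5\log 2-3\log c+\log\eta$. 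Both give constants depending only on $\lambda$ and $M$, so both suffice; your direct iteration just cuts out the invocation of Theorem \ref{bco} in this one piece. (Minor: the remark about transferring Section \ref{sec-gibbs-like} to the augmented space $\Omega^{(\alpha)}$ appears just before Lemma \ref{auxi}, not after Lemma \ref{compare-q-n}, but this does not affect the argument.)
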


\begin{proof}
Fix any $n,k\in \N$.  Then 
$\psi^\alpha_{n+k}(x)-\psi^\alpha_n(x)=\log(|B_{x|_{n+k}}|/|B_{x|n}|).$ Since $\Omega^{(\alpha)}_0=\{I,III\}$ and $\Omega^{(\alpha)}_1$ contains at least one word of type $II$, there exists $u\in \Omega^{(\alpha)}_0\cup\Omega^{(\alpha)}_1$ such that  $u$ and $x|_n$ have the same type. Write $w=x_{n+1}\cdots x_{n+k}$, by \eqref{band-length}, there exists constant $0<c<1$ only depending on $\lambda$ and $M$ such that  
$$
4c\le |B_u|\le 4\ \  \ \text{ and }\ \ \ 4c^{k+1}\le |B_{uw}|\le 4\cdot 2^{-k}.
$$
Then by Theorem \ref{bco}, there exists  constant $\eta>1$ only depending on $\lambda$ and $M$ such that 
\begin{eqnarray*}
|\psi^\alpha_{n+k}(x)-\psi^\alpha_n(x)|&=&\left|\log\left(\frac{|B_{x|_{n+k}}|}{|B_{x|n}|}/\frac{|B_{uw}|}{|B_{u}|}\right)+\log \frac{|B_{uw}|}{|B_{u}|}\right|\\
&\le&\log \eta+|\log|B_{uw}||+|\log |B_u||\\
&\le& \log \eta+4\log 2 +k(\log 2-3\log c)\\
&\le &k(5\log 2-3\log c+\log \eta).
\end{eqnarray*}
Hence $\Psi^\alpha$ is regular with $C_{\rm rg}(\Psi^\alpha)=5\log 2-3\log c+\log \eta.$

From the definition, if $x|_n=y|_n$, then $ \psi_n^\alpha(x)= \psi_n^\alpha(y).$ Hence $\Psi^\alpha$ has bounded variation with $C_{\rm bv}(\Psi^\alpha)=0.$

Now assume $w=uv,\tilde w=\tilde u v\in \Omega^{(\alpha)}_{\ast}$ and  $x\in [w]^\alpha, \tilde x\in [\tilde w]^\alpha$.  By Theorem \ref{bco}, we have 
\begin{eqnarray*} 
|(\psi^\alpha_{|w|}(x)-\psi^\alpha_{|u|}(x))-( \psi^\alpha_{|\tilde w|}(\tilde x)-\psi^\alpha_{|\tilde u|}(\tilde x))|=\left|\log \left(\frac{|B_w|}{|B_u|}/\frac{|B_{\tilde w}|}{|B_{\tilde u}|}\right)\right|\le \log \eta.
\end{eqnarray*}
Thus $\Psi^\alpha$ has bounded covariation with $C_{\rm bc}(\Psi^\alpha)=\log \eta.$

Thus $\Psi^\alpha\in \mathcal{F}^\alpha$ and $C_{\rm rg}(\Psi^\alpha), C_{\rm bv}(\Psi^\alpha), C_{\rm bc}(\Psi^\alpha)$ only depend on $\lambda$ and $M.$

   Since $B_{x|_{n+1}}$ is a subinterval of $B_{x|_{n}},$ combine with \eqref{band-length}, we have 
   $$
   \psi^\alpha_{n+1}(x)=\log |B_{x|_{n+1}}|\le \log |B_{x|_{n}}|= \psi^\alpha_{n}(x)\le (2-n)\log 2.
   $$
   Thus  $\Psi^\alpha $ is negative.
\end{proof}

Define the weak Gibbs metric $d_\alpha:=d_{\Psi^\alpha}$ on $\Omega^{(\alpha)},$ then 
for any $x,y\in \Omega^{(\alpha)}$,  
 \begin{equation*}
{ {d}_\alpha}(x,y)=|B_{x\wedge y}|.
\end{equation*}
Recall that $\pi_\alpha$ is defined by \eqref{pi-alpha}.

\begin{prop}\label{bi-lip}
$\pi_\alpha: (\Omega^{(\alpha)}, d_\alpha)\to (\Sigma_{\alpha,\lambda},|\cdot|)$ is bi-Lipschitz.
\end{prop}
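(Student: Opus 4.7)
The plan is to establish the two Lipschitz inequalities separately.

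For the upper bound, observe that if $w = x \wedge y$, then both $\pi_\alpha(x)$ and $\pi_\alpha(y)$ lie in the band $B_w$, so
\[
|\pi_\alpha(x) - \pi_\alpha(y)| \leq |B_w| = d_\alpha(x,y),
\]
and $\pi_\alpha$ is $1$-Lipschitz.

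For the reverse bound, take $w = x \wedge y \in \Omega^{(\alpha)}_n$, so that $x_{n+1} \neq y_{n+1}$. Then $\pi_\alpha(x) \in B_{w x_{n+1}}$ and $\pi_\alpha(y) \in B_{w y_{n+1}}$, and these are two distinct bands of $\B_{n+1}(\alpha)$ lying inside the common parent $B_w \in \B_n(\alpha)$. It therefore suffices to prove the following \emph{separation estimate}: there exists $c = c(\lambda, M) > 0$ such that whenever $B', B''$ are distinct bands of $\B_{n+1}(\alpha)$ contained in some $B_w \in \B_n(\alpha)$,
\[
\mathrm{dist}(B', B'') \geq c\, |B_w|.
\]
Granting this, the inequality $|\pi_\alpha(x) - \pi_\alpha(y)| \geq c\, d_\alpha(x,y)$ follows immediately.

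To prove the separation estimate I would exploit the fact that only finitely many local configurations occur. The number of sub-bands of $B_w$ in $\B_{n+1}(\alpha)$ is at most $2a_{n+1} + 1 \leq 2M+1$, and their combinatorial layout is governed by the pair $(t_w, a_{n+1}) \in \{I, II, III\} \times \{1, \ldots, M\}$. For each of these finitely many configurations I would verify the estimate in a reference setting (e.g.\ taking $\alpha$ of constant type with the relevant partial quotient), where a direct band/gap analysis via the trace-map recursion from \cite{R, LW, FLW} shows that under $\lambda > 20$ the gap between consecutive sub-bands is a definite fraction of $|B_w|$. Bounded covariation (Theorem \ref{bco}) then transfers the ratios $|B_{wu}|/|B_w|$ from the reference to the current $\alpha$, and a short combinatorial argument converts the reference gap estimate into the desired uniform lower bound.

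The main obstacle is producing the reference gap estimate in a clean form: the band inclusion picture from Definition \ref{def-4.3} involves three types of parent bands and distinct subcases for each, and one must check that, in each configuration, consecutive sub-bands are genuinely separated by a controlled gap rather than merely adjacent. Once this quantitative gap bound is pinned down in the reference cases, Theorem \ref{bco} makes the remainder of the proof routine.
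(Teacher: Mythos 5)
Your upper bound argument is identical to the paper's and is correct. For the lower bound, you correctly reduce the problem to a separation estimate between distinct sub-bands of a common parent band; in the paper this estimate is exactly Lemma~\ref{gap-esti}, which is simply quoted from \cite{Q}: every gap $G$ of order $n$ satisfies $|G|\ge C|B_G|$ with $C=C(M,\lambda)$. Since $x_{n+1}\ne y_{n+1}$ forces at least one order-$n$ gap to lie in the open interval $(\hat x,\hat y)$, the reverse Lipschitz inequality follows at once. You apparently did not notice that this lemma is already stated immediately before the proposition, and instead try to re-derive it.

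That re-derivation has a genuine gap. Theorem~\ref{bco} (bounded covariation) controls ratios of band lengths $|B_{wu}|/|B_w|$; gaps are, by definition, never bands, so their lengths cannot be written as such ratios and are not transferred by Theorem~\ref{bco}. Even if you knew all the ratios $|B_{we_j}|/|B_w|$ both in a reference constant-type frequency and in the frequency at hand, this only pins down the \emph{total} unoccupied length inside $B_w$ (assuming the extreme sub-bands touch the endpoints of $B_w$), not the length of an \emph{individual} gap $G_j$ between two prescribed consecutive sub-bands: the total could concentrate in one gap and leave the others arbitrarily small. So the ``short combinatorial argument'' you hope will convert the reference gap bound into a uniform bound does not exist in routine form. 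A further, smaller issue is that the trace polynomial $h_{(n+1,\cdot)}$ governing the level-$(n+1)$ bands depends on the entire string $a_1\cdots a_{n+1}$, not merely on the local data $(t_w,a_{n+1})$, so the proposed matching to a ``reference configuration'' is not as clean as described. The right mechanism, as in \cite{Q}, is a direct quantitative analysis of the trace polynomials valid uniformly over $\mathscr{B}_M$ under $\lambda>20$; in the present paper that work is encapsulated in Lemma~\ref{gap-esti}, and the proof of Proposition~\ref{bi-lip} should simply invoke it.
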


To prove this property, we do some preparation.  Write 
$
{\rm Co}(\Sigma_{\alpha,\lambda})\setminus \Sigma_{\alpha,\lambda}=\bigcup_i G_i,
$ 
where ${\rm Co}(\Sigma_{\alpha,\lambda})$ is the convex hull of $\Sigma_{\alpha,\lambda}.$
Each $G_i$ is called a {\it  gap} of the spectrum. A gap $G$ is called of {\it order} $n$, if $G$ is covered by some band in $\B_n $ but not covered by any band in $\B_{n+1}.$ Denote by ${\mathcal G}_n$  the set of gaps of order $n$. For any $G\in {\mathcal G}_n$, let $B_G$ be the unique band in $\B_n$ which contains $G.$ The following lemma is proven in \cite{Q}:
 
 \begin{lem} \label{gap-esti}
 There exists a constant $C=C(M,\lambda)\in (0,1)$ such that  for any gap $G\in \bigcup_{n\ge 0}{\mathcal G}_n$ we have 
$
|G|\ge C|B_G|.
$
\end{lem}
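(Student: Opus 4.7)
Fix $G \in \mathcal{G}_n$ and let $B = B_G \in \B_n$ be the band covering it. By definition, $G$ is a connected component of $B \setminus \bigcup_{B' \in \B_{n+1}, B' \subset B} B'$. The plan is to show that the ratio $|G|/|B|$ depends, up to a bounded multiplicative constant, only on a finite amount of local data, and then to check positivity in each of the finitely many resulting configurations.

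First, I would enumerate the local configurations. By the structure of the covering described after Definition \ref{def-4.3}, the children of $B$ in $\B_{n+1}$ are completely determined by the type of $B$ (one of $I, II, III$) and the single partial quotient $a_{n+1} \in \{1, \dots, M\}$: there are at most $2a_{n+1}+1 \le 2M+1$ children, so at most $2M+2$ gaps inside $B$. Labeling each gap by its position index $j$, we obtain a finite set $\mathcal{C}$ of ``local configurations'' $(T, a, j)$ with $T \in \{I, II, III\}$, $a \in \{1,\dots,M\}$, $j \in \{0,\dots,2M+1\}$; every gap $G \in \bigcup_n \mathcal{G}_n$ falls under exactly one such label.

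Next I would use Theorem \ref{bco} to reduce to a reference configuration. Write $B = B_w$ with $w \in \Omega^{(\alpha)}_n$, and list the children of $B$ as $B_{wu_1}, \dots, B_{wu_k}$ in their natural order inside $B$, where each $u_i$ has length $1$ and is determined (as an abstract letter) purely by the local configuration $(T,a,j)$. Then
\[
\frac{|G|}{|B|} \;=\; 1 - \sum_{i} \frac{|B_{wu_i}|}{|B_w|} \;-\; \sum_{j' \neq j} \frac{|G_{j'}|}{|B_w|},
\]
where the $G_{j'}$ are the other gaps inside $B$. Applying bounded covariation to each ratio $|B_{wu_i}|/|B_w|$ (with the same $u_i$'s realized over a reference frequency $\beta \in \mathscr{B}_M$ whose continued fraction locally matches the configuration), every such ratio, and hence the positions of the children inside $B$, is preserved up to the multiplicative constant $\eta = \eta(\lambda, M)$. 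Consequently, for the chosen reference frequency $\beta$ and the corresponding reference band $B^*$ and gap $G^*$ matching the configuration $(T,a,j)$,
\[
\frac{|G|}{|B_G|} \;\ge\; \eta^{-1} \, \frac{|G^*|}{|B^*|}.
\]

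Finally, I would show that $|G^*|/|B^*| > 0$ for each of the finitely many configurations in $\mathcal{C}$. Since we only need a lower bound in each reference case, it suffices to exhibit, for each $(T,a,j)$, a single $\beta \in \mathscr{B}_M$ whose first two partial quotients realize the configuration and then observe that the corresponding gap is non-degenerate. Nondegeneracy at the top levels is a consequence of the large-coupling hypothesis $\lambda > 20$: the generating polynomials $h_{(0,1)} = x - \lambda$ and $h_{(1,0)} = x$ are widely separated, the trace recursions $h_{(n+1,0)} = h_{(n,0)} h_{(n,1)} - h_{(n,-1)}$ (and the other Sturm trace identities) produce children whose $\{|h|\le 2\}$-preimages are strictly disjoint, and hence each listed gap has positive length. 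Taking $C := \eta^{-1} \min_{(T,a,j) \in \mathcal{C}} |G^*|/|B^*|$ finishes the proof, with $C$ depending only on $M$ and $\lambda$.

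The main obstacle is the last step: verifying that in every reference configuration the asserted gap really has positive length. Once the configuration list is enumerated, this reduces to a finite number of trace-polynomial computations, but one must check that no ``neighboring'' children can meet. For this the hypothesis $\lambda > 20$ is essential (as without a coupling threshold gaps may close), and it is precisely what is used, via the already-established bounds on $\{h_{(n,p)}\}$ and on band lengths \eqref{band-length}, to rule out tangencies between consecutive children of $B$.
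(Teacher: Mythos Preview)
The paper does not prove this lemma; it simply cites \cite{Q}. Your proposal therefore cannot be compared to the paper's argument, but it can be assessed on its own merits, and it has a genuine gap.

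The step that fails is the reduction via Theorem~\ref{bco}. That theorem controls ratios of \emph{band} lengths: if $wu$ and $\tilde w u$ are admissible then $|B_{wu}|/|B_w|$ and $|B_{\tilde w u}|/|B_{\tilde w}|$ differ by at most the factor $\eta$. It says nothing about \emph{gap} lengths or about the \emph{positions} of children inside the parent. Two configurations can have identical child-to-parent length ratios yet completely different gap structure, because the endpoints of the children are determined by where the generating polynomial $h_{(n+1,p)}$ hits $\pm 2$, not merely by the total measure the children occupy. Consequently your asserted inequality
\[
\frac{|G|}{|B_G|} \;\ge\; \eta^{-1}\,\frac{|G^*|}{|B^*|}
\]
does not follow from bounded covariation of band lengths. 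The displayed identity you wrote for $|G|/|B|$ makes this visible: it expresses one gap ratio in terms of all the other gap ratios $|G_{j'}|/|B_w|$, so nothing has been reduced.

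What is actually needed is control on the \emph{positions} of children inside $B$, which in \cite{Q} comes from derivative estimates for the generating polynomials $h_{(n,p)}$ (this is where $\lambda>20$ enters in an essential way): one bounds $|h_B'|$ from above and below on $B$, compares values of $h_B$ at the endpoints of $G$, and deduces a lower bound for $|G|$ directly in terms of $|B|$. Your plan of reducing to finitely many reference cases is reasonable in spirit, but to make it work you must replace the appeal to Theorem~\ref{bco} with such analytic control on the trace polynomials.
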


\noindent {\bf Proof of Proposition \ref{bi-lip}. }\ 
Given $x,y\in  \Omega^{(\alpha)}.$ Assume $x|_n=y|_n$ and $x_{n+1}\ne y_{n+1},$ then 
$
 d_\alpha(x,y) =|B_{x|_n}|.
$
Write $\hat x:=\pi_{\alpha}(x)$ and $\hat y:=\pi_{\alpha}(y)$. By the definition of $\pi_\alpha,$ we have $\hat x,\hat y\in B_{x|_n}$, consequently 
$$
|\hat x-\hat y|\le |B_{x|_n}|= d_\alpha(x,y).
$$

On the other hand, since $x_{n+1}\ne y_{n+1},$ there is a gap $G$ of order $n$ which is contained in the open interval $(\hat x,\hat y).$  By Lemma \ref{gap-esti}, there exists a constant $C=C(M,\lambda)>0$ such that 
$$
|\hat x-\hat y|\ge |G|\ge C |B_G|=C|B_{x|_n}|=C  d_\alpha(x,y).
$$
Thus $\pi_{\alpha}$ is  bi-Lipschitz.
\hfill $\Box$


\subsection{ DOS is  exact upper and lower dimensional}

Define $\Phi^\alpha=\{\phi_n^\alpha: n\ge 1\}$ on $\Omega^{(\alpha)}$ as
\begin{equation}\label{def-Phi}
\phi^\alpha_n(x):=\log q_n(\alpha).
\end{equation}

\begin{prop}\label{F-alpha}
$\Phi^\alpha\in \mathcal{F}^\alpha$ with 
$$
C_{\rm rg}(\Phi^\alpha)=2\log(M+1), \ \ C_{\rm bv}(\Phi^\alpha)=0\ \ \text{ and }\ \ 
C_{\rm bc}(\Phi^\alpha)=\log 2.
$$
\end{prop}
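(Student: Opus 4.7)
\textbf{Proof proposal for Proposition \ref{F-alpha}.}
The plan is to verify the three defining conditions of $\mathcal{F}^\alpha$ separately, since the potential $\Phi^\alpha$ is constant in the variable $x$, two of the three conditions collapse to pure arithmetic of the continued fraction denominators $q_n(\alpha)$.

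First, bounded variation is immediate: because $\phi_n^\alpha(x)=\log q_n(\alpha)$ does not depend on $x\in \Omega^{(\alpha)}$, we have $\phi_n^\alpha(x)=\phi_n^\alpha(y)$ for every $x,y$, giving $C_{\rm bv}(\Phi^\alpha)=0$ directly from \eqref{b-dis}.

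Next, regularity. Using $a_n\le M$ and the recursion $q_{n+1}=a_{n+1}q_n+q_{n-1}\le (M+1)q_n$ from \eqref{recur-q}, induction on $n-m$ gives $q_n(\alpha)\le (M+1)^{n-m}q_m(\alpha)$ for $n\ge m$. Since $\{q_n(\alpha)\}$ is increasing (so $\log q_n\ge\log q_m$), this yields
$$0\le \log q_n(\alpha)-\log q_m(\alpha)\le (n-m)\log(M+1)\le 2(n-m)\log(M+1),$$
which is \eqref{regular} with $C_{\rm rg}(\Phi^\alpha)=2\log(M+1)$.

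The substantive step is bounded covariation. Fix $w=uv$, $\tilde w=\tilde u v$ in $\Omega^{(\alpha)}_{\ast}$; because the expression inside $|\cdot|$ in \eqref{b-cov} does not depend on $x,\tilde x$ here, it equals
$$\log\bigl(q_{|w|}(\alpha)/q_{|u|}(\alpha)\bigr)-\log\bigl(q_{|\tilde w|}(\alpha)/q_{|\tilde u|}(\alpha)\bigr).$$
My plan is to invoke Lemma \ref{a-a-q-n}(i) twice, once at $n=|u|$ and once at $n=|\tilde u|$ (with $m=|v|$), to obtain
$$q_{|v|}\bigl(G^{|u|}(\alpha)\bigr)\le \frac{q_{|u|+|v|}(\alpha)}{q_{|u|}(\alpha)}\le 2q_{|v|}\bigl(G^{|u|}(\alpha)\bigr),$$
and the analogous bound with $|u|$ replaced by $|\tilde u|$. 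The key observation that fuses the two is the following: since the alphabets $\A_1,\dots,\A_M$ are pairwise disjoint by construction, the fact that the \emph{same} word $v$ is admissible both immediately after $u$ and immediately after $\tilde u$ forces the partial quotient sequences to coincide on the corresponding windows, i.e.\ $a_{|u|+j}(\alpha)=a_{|\tilde u|+j}(\alpha)$ for $j=1,\dots,|v|$. Consequently $q_{|v|}(G^{|u|}\alpha)=q_{|v|}(G^{|\tilde u|}\alpha)$, and both ratios above lie in the same interval $[A,2A]$ with $A=q_{|v|}(G^{|u|}\alpha)$. Their quotient therefore lies in $[1/2,2]$, giving $C_{\rm bc}(\Phi^\alpha)=\log 2$.

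The only nontrivial input is this alphabet-disjointness identification, and it is already built into the construction of $\Omega^{(\alpha)}$ in Sect.~\ref{sec-structure}; so I expect no obstacle beyond carefully spelling out the index matching of continued-fraction partial quotients.
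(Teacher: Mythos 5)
Your proof is correct and takes essentially the same approach as the paper's: bounded variation is immediate from the $x$-independence of $\phi^\alpha_n$, bounded covariation is handled by applying Lemma \ref{a-a-q-n}(i) at both $|u|$ and $|\tilde u|$ and using alphabet disjointness to conclude $q_{|v|}(G^{|u|}\alpha)=q_{|v|}(G^{|\tilde u|}\alpha)$ so that both ratios lie in a common interval $[A,2A]$, and regularity reduces to controlling the growth of $q_n$. The one minor divergence is cosmetic: for regularity you use the elementary recursion bound $q_{n+1}\le(M+1)q_n$ from \eqref{recur-q}, whereas the paper routes through Lemma \ref{a-a-q-n}(i) together with $q_m(G^n\alpha)\le(M+1)^m$; both give the stated constant (and yours in fact yields the slightly sharper $\log(M+1)$).
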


\begin{proof}
For any $n,m\ge 1$, by Lemma \ref{a-a-q-n} and the fact that $G^n(\alpha)\in \mathscr{B}_M$,  
\begin{eqnarray*}
|\phi^\alpha_n(x)-\phi^\alpha_{n+m}(x)|&=&|\log q_n(\alpha)-\log q_{n+m}(\alpha)|\le \log 2+\log|q_m(G^n(\alpha))|\\
&\le& \log 2+ m\log (M+1)\le 2m\log (M+1).
\end{eqnarray*}
Thus $\Phi^\alpha$ is regular with $C_{\rm rg}(\Phi^\alpha)=2\log(M+1)$.

From the definition, $\Phi^\alpha$ has bounded variation with $C_{\rm bv}(\Phi^\alpha)=0.$

Now assume $w=uv,\tilde w=\tilde u v\in \Omega^{(\alpha)}_{\ast}$ and $x\in [w]^\alpha, \tilde x\in [\tilde w]^\alpha$. Write $n=|u|, \tilde n=|\tilde u|$ and $m=|v|.$
Similar computation as above shows that 
\begin{eqnarray*}
\log q_m(G^n(\alpha))& \le\phi^\alpha_{|w|}(x)-\phi^\alpha_{|u|}(x)&\le \log 2+\log q_m(G^n(\alpha)),\\
\log q_m(G^{\tilde n}(\alpha))& \le\phi^\alpha_{|\tilde w|}(x)-\phi^\alpha_{|\tilde u|}(x)&\le \log 2+\log q_m(G^{\tilde n}(\alpha)).
\end{eqnarray*}
Since $w$ and $\tilde w$ have the same suffix of length $m$, we have 
$$
G^n(\alpha)=[0; v_1,v_2,\cdots, v_m, b_{m+1}\cdots ];\ G^{\tilde n}(\alpha)=[0; v_1,v_2,\cdots, v_m, \tilde b_{m+1}\cdots ].
$$
Hence $q_m(G^{ n}(\alpha))=q_m(G^{\tilde n}(\alpha)).$ As a consequence, we have
\begin{equation*} 
|(\phi^\alpha_{|w|}(x)-\phi^\alpha_{|u|}(x))-( \phi^\alpha_{|\tilde w|}(\tilde x)-\phi^\alpha_{|\tilde u|}(\tilde x))|\le \log 2.
\end{equation*}
Thus $\Phi^\alpha$ has bounded covariation with $C_{\rm bc}(\Phi^\alpha)=\log 2.$ 

By the definition, $\Phi^\alpha\in \mathcal{F}^\alpha.$
\end{proof}

Let $\mu_\alpha$ be a Gibbs-like measure of $\Phi^\alpha$.

\begin{prop}\label{strong-equiv}
$(\pi_\alpha)_\ast(\mu_\alpha)\asymp \NN_{\alpha,\lambda}$.
\end{prop}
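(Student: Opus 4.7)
The strategy is to compare both $(\pi_\alpha)_* \mu_\alpha$ and $\NN_{\alpha,\lambda}$ against the canonical approximating measures $\mu_m = (\#\Omega^{(\alpha)}_m)^{-1}\sum_{v \in \Omega^{(\alpha)}_m} \delta_{x_v}$ introduced at the start of the paper (for which $\mu_m \to (\pi_\alpha^{-1})_* \NN_{\alpha,\lambda}$ weak-$\ast$ by \cite{R}), and to extract comparability on the cylinders of $\Omega^{(\alpha)}$, which then transfers to $\Sigma_{\alpha,\lambda}$ via $\pi_\alpha$.

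The first ingredient is a direct computation of the Gibbs-like mass of a cylinder. Because $\phi^\alpha_n(x) = \log q_n(\alpha)$ is constant on $\Omega^{(\alpha)}$, the normalising sum in \eqref{def-gibbs-like} simplifies to $\sigma_n = q_n(\alpha) \cdot \#\Omega^{(\alpha)}_n$, so Theorem \ref{exist-gibbs-like} combined with Proposition \ref{F-alpha} immediately gives
$$\mu_\alpha([w]^\alpha) \sim \frac{1}{\#\Omega^{(\alpha)}_n} \qquad (w \in \Omega^{(\alpha)}_n),$$
with implicit constants depending only on $\lambda$ and $M$.

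The second ingredient is an analogous estimate for the counting measures $\mu_m$. For $m \ge n$ and $w \in \Omega^{(\alpha)}_n$ one has directly $\mu_m([w]^\alpha) = \#\Xi_{w, m-n}(\alpha) / \#\Omega^{(\alpha)}_m$. Expressing both numerator and denominator via the auxiliary-matrix formulae of Lemma \ref{number-n-m} and Lemma \ref{auxi}(ii), I invoke Lemma \ref{auxi}(i): for $m - n \ge 5$, every entry of $\hat A_{a_{n+1}} \cdots \hat A_{a_m}$ is comparable to every other entry, with a ratio depending only on $M$. This forces the possible values of $v^{t_w} \hat A_{a_{n+1}} \cdots \hat A_{a_m} (1,1,1)^t$ corresponding to the three possible types of $w$ to be mutually comparable, and the ratio above collapses to $\mu_m([w]^\alpha) \sim 1/\#\Omega^{(\alpha)}_n$ uniformly in $w$ and $m$. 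The finitely many remaining cases $m - n < 5$ are trivially uniformly comparable.

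Because $[w]^\alpha$ is clopen, weak-$\ast$ convergence then yields $((\pi_\alpha^{-1})_* \NN_{\alpha,\lambda})([w]^\alpha) \sim 1/\#\Omega^{(\alpha)}_n$ with the same uniformity, hence $\mu_\alpha([w]^\alpha) \sim ((\pi_\alpha^{-1})_* \NN_{\alpha,\lambda})([w]^\alpha)$ on every cylinder. Since $\Omega^{(\alpha)}$ is totally disconnected, every open subset is a countable disjoint union of cylinders; the comparability therefore extends to open sets by countable additivity, and then to all Borel sets by outer regularity of finite Borel measures on a compact metric space. Pushing forward by $\pi_\alpha$ then gives $(\pi_\alpha)_* \mu_\alpha \asymp \NN_{\alpha,\lambda}$. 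The delicate point throughout is the uniformity in $w$ in the cylinder estimate for $\mu_m$: without the matrix-entry comparability of Lemma \ref{auxi}(i) the three quantities corresponding to the different starting types of $w$ could have unbounded ratio, and the desired uniform bound would fail. This is precisely where the bounded-type hypothesis $\alpha \in \mathscr{B}_M$ is essential, since the constant provided by Lemma \ref{auxi}(i) depends on $M$.
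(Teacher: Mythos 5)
Your strategy and the paper's are structurally the same: compute the Gibbs-like mass of a cylinder (both get $\mu_\alpha([w]^\alpha) \sim 1/\#\Omega^{(\alpha)}_n \sim 1/q_n(\alpha)$), compute the cylinder mass of an approximating sequence for the DOS using the auxiliary matrices $\hat A_{a_j}$ and Lemma \ref{auxi}(i), and match them. The difference — and the gap — lies in which approximating sequence you use and whether its convergence is actually available.

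You approximate $(\pi_\alpha^{-1})_* \NN_{\alpha,\lambda}$ by the equidistributed measures $\mu_m = (\#\Omega^{(\alpha)}_m)^{-1}\sum_{v\in\Omega^{(\alpha)}_m}\delta_{x_v}$ and cite the introduction's remark (``by applying \cite{R}, \dots $\mu_n$ converge to $\mu_\alpha$'') as the source of weak-$\ast$ convergence. But that remark is informal motivation, not a lemma: $\mu_m$ puts equal mass on \emph{every} spectral generating band, including the type-$I$ bands, while the classical density-of-states approximation puts mass $1/q_l$ on one eigenvalue per band of $\sigma_l$ — and by Lemma \ref{sigma-k} those are exactly the type-$II$ and type-$III$ bands. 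These are genuinely different sequences of measures (recall $q_n(\alpha) \le \#\Omega^{(\alpha)}_n \le 5q_n(\alpha)$, not equality), and \cite{R} does not directly assert the convergence of your $\mu_m$. The paper's proof is careful precisely here: it works with the eigenvalue-counting measures $\nu_l := q_l^{-1}\sum_i \delta_{x_{l,i}}$, whose convergence $\nu_l \to \NN_{\alpha,\lambda}$ is a standard fact about DOS (cited to \cite{CL}), and then uses Lemma \ref{sigma-k} to identify the bands of $\sigma_l$ with the type-$II,III$ codewords and Lemma \ref{number-n-m} to count them, obtaining $\nu_l(B_w) = v^{t_w}\hat A_{a_{n+1}}\cdots\hat A_{a_l} v_\ast / q_l(\alpha)$. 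From there the matrix comparison via Lemma \ref{auxi}(i), Lemma \ref{auxi}(ii), Lemma \ref{compare-q-n} and Lemma \ref{a-a-q-n}(i) yields $\nu_l(B_w)\sim 1/q_n(\alpha)$, uniformly in $w$, exactly as you want. So the content you need — that an approximating sequence for the DOS has uniformly comparable cylinder masses — is not something you get to assume; it is what must be verified, and the paper verifies it from the classical eigenvalue-counting definition rather than from the equidistribution on all bands.

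If you replace $\mu_m$ by $\nu_l$, restrict the descendant count to type-$II,III$ (i.e.\ use $v_\ast$ rather than $(1,1,1)^t$ in Lemma \ref{number-n-m}), normalize by $q_l(\alpha)$ instead of $\#\Omega^{(\alpha)}_l$, and invoke Lemma \ref{compare-q-n} to convert between $q_n(\alpha)$ and $\#\Omega^{(\alpha)}_n$ at the end, your argument becomes the paper's argument. Your final passage from cylinders to Borel sets is fine but can be shortened: once the comparability holds with a uniform constant on all cylinders, which form a $\pi$-system generating the Borel $\sigma$-algebra, the extension to all Borel sets is immediate (and you correctly note that pushing forward by the bijection $\pi_\alpha$ finishes the job).
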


\begin{proof}
By the definition of $\pi_\alpha$ and $\asymp$, we only need to show that 
\begin{equation}\label{sim-mu-NN}
\mu_\alpha([w]^\alpha)\sim \NN_{\alpha,\lambda}(B_w),\ \ \ \forall w\in \Omega^{(\alpha)}_{\ast}.
\end{equation}

On one hand, by Theorem \ref{exist-gibbs-like}, Proposition \ref{F-alpha},  and Lemma \ref{compare-q-n},  there exists a constant $C>1$ only depending on $M$ such that 
for any $w\in \Omega^{(\alpha)}_{n},$
\begin{equation}\label{uniform-control}
\frac{1}{Cq_n(\alpha)}\le \mu_\alpha([w]^\alpha) \le \frac{C}{q_n(\alpha)}.
\end{equation}

On the other hand, let $H_n$ be the restriction of  $H_{\alpha,\lambda,0}$ to the box $[1,q_{n}]$ with periodic boundary condition. Let  
$
\mathcal X_n=\{x_{n,1},\cdots, x_{n,q_n}\}
$ be the eigenvalues of $H_n$. Recall that $\sigma_n$ is defined by \eqref{sigmak}. It is well-known that
each band in $\sigma_n$ contains exactly one value in $\mathcal X_n$(see for example \cite{T,R}).
 Define 
$
\nu_n:=\frac{1}{q_n}\sum_{i=1}^{q_n}\delta_{x_{n,i}},
$
then  $\nu_n\to \NN_{\alpha,\lambda}$ weakly(see for example \cite{CL}).
 For any $l>n+10$,  by Lemma \ref{sigma-k} and Lemma \ref{number-n-m},  we have 
\begin{eqnarray*}
\nu_l(B_{w})&=&\sum_{u\in\Omega^{(\alpha)}_{l}, w\prec u}\nu_l (B_u)\\
&=&\frac{\# \{u\in\Omega^{(\alpha)}_{l}: w\prec u, t_u=II, III\} }{q_l(\alpha)}\\
&=&\frac{v^{t_w}\cdot \hat A_{a_{n+1}}\cdots \hat A_{a_{l}} \cdot v_\ast}{q_l(\alpha)}.
\end{eqnarray*}
By Lemma \ref{auxi} (i), 
\begin{eqnarray*}
v^{t_w}\cdot \hat A_{a_{n+1}}\cdots \hat A_{a_{n+5}}&\sim_M &(1,0,1)\cdot \hat A_{a_{n+1}}\cdots \hat A_{a_{n+5}}\\
 \hat A_{a_{l-4}}\cdots \hat A_{a_{l}}\cdot v_\ast&\sim_M&  \hat A_{a_{l-4}}\cdots \hat A_{a_{l}}\cdot (1,1,1)^t.
\end{eqnarray*}
Consequently, by Lemma \ref{auxi} (ii), Lemma \ref{compare-q-n} and Lemma \ref{a-a-q-n} (i), we have 
\begin{eqnarray*}
\nu_l(B_{w})&\sim&\frac{(1,0,1)\cdot \hat A_{a_{n+1}}\cdots \hat A_{a_{l}} \cdot (1,1,1)^t}{q_l(\alpha)} \\
&=& \frac{\#\Omega^{(G^n(\alpha))}_{l-n}}{q_l(\alpha)}\sim\frac{q_{l-n}(G^n(\alpha))}{q_l(\alpha)}\sim\frac{1}{q_n(\alpha)}.
\end{eqnarray*}

Let $l\to \infty$, we conclude that  
\begin{equation}\label{uni-control}
\NN_{\alpha,\lambda}(B_{w})\sim\frac{1}{q_n(\alpha)}.
\end{equation} 
Combine \eqref{uniform-control} and \eqref{uni-control}, we get \eqref{sim-mu-NN}.
\end{proof}

\noindent{\bf Proof of Theorem \ref{main} (i).}\  
 By Theorem \ref{exact-hp-dim}, $\mu_\alpha$ is   exact upper and lower dimensional. Since $\pi_\alpha$ is bi-Lipschitz,  and $(\pi_\alpha)_\ast(\mu_\alpha)\asymp \NN_{\alpha,\lambda}$, $\NN_{\alpha,\lambda}$ has the same property.
 
The statement that  $\NN_{\alpha,\lambda}$ is exact-dimentional if and only if $d_H(\alpha,\lambda)=d_P(\alpha,\lambda)$  follows directly  from the definition and the fact  that  $\NN_{\alpha,\lambda}$ is exact upper and lower dimensional.
\hfill $\Box$


\subsection{Non exact-dimentional DOS}

In this subsection, we prove Theorem \ref{main} (ii), i.e.,  we construct a Sturm Hamiltonian $H_{\alpha,\lambda,\theta}$ such that the related DOS is not exact-dimensional. We roughly describe the idea as follows:  by \cite{Q},   for $\alpha_\kappa=[0;\kappa,\kappa,\cdots]$,  $\NN_{\alpha_\kappa,\lambda}$ is exact-dimensional with dimension $d(\kappa,\lambda)$, and  $d(1,\lambda)<d(2,\lambda)$ for $\lambda$ large enough. We will construct a frequency $\alpha=[0;a_1,a_2,\cdots]\in \mathscr{B}_2$ with $a_1a_2\cdots=1^{t_1}2^{\tau_1}1^{t_2}2^{\tau_2}\cdots$ such that $t_n\gg \tau_{n-1}$ and $\tau_n\gg t_n$. For such a frequency $\alpha$, $\NN_{\alpha,\lambda}$ is, in certain sense,  a concatenation of  $\NN_{\alpha_1,\lambda}$ and $\NN_{\alpha_2,\lambda}$ in  an alternative way. We can show that  for $x$ in a subset of positive $\NN_{\alpha,\lambda}$ measure,  the quantity   $\log \NN_{\alpha,\lambda}(B(x,r))/\log r$ oscillates between $d(1,\lambda)$ and $d(2,\lambda),$ which means that the local dimension does not exist at $x$. In particular,  $\NN_{\alpha,\lambda}$ can not  be exact-dimensional.
 
 \begin{prop}\label{example-not-eq}
 There exist  $\alpha\in \mathscr{B}_2$  and $\lambda_0>20$ such that 
$
d_H(\alpha,\lambda)<d_P(\alpha,\lambda)
$
for any $\lambda\ge \lambda_0$.
\end{prop}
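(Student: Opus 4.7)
The plan is to combine Theorem \ref{main}(i) with a two-scale dimension analysis. Since $\NN_{\alpha,\lambda}$ is exact upper and lower dimensional, it suffices to exhibit a frequency $\alpha \in \mathscr{B}_2$ and a set of positive $\mu_\alpha$-measure on which $\underline{d}_{\mu_\alpha}(x) \le d(1,\lambda)$, together with a set of positive $\mu_\alpha$-measure on which $\overline{d}_{\mu_\alpha}(x) \ge d(2,\lambda)$; then $d_H(\alpha,\lambda) \le d(1,\lambda) < d(2,\lambda) \le d_P(\alpha,\lambda)$ provided $\lambda$ is chosen so the right-hand inequality holds. By the dimension formula and asymptotics proved in \cite{Q} (the analogue of \eqref{dim-Fibonacci}--\eqref{dos-Fibo} for arbitrary $\alpha_\kappa$), there is $\lambda_0>20$ such that $d(1,\lambda)<d(2,\lambda)$ for every $\lambda\ge\lambda_0$. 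I fix such $\lambda$ and take $M=2$.

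I would then construct $\alpha=[0;1^{t_1}2^{\tau_1}1^{t_2}2^{\tau_2}\cdots]$ by recursion, setting $N_k^{(1)}:=t_1+\tau_1+\cdots+t_k$ and $N_k^{(2)}:=N_k^{(1)}+\tau_k$, and choosing each new exponent so large that the ``last block'' fully controls the dimension ratio at the corresponding scale. Three ingredients drive the computation: the Gibbs-like estimate $\mu_\alpha([w]^\alpha)\sim_M 1/q_n(\alpha)$ together with $d_\alpha(x,y)=|B_{x\wedge y}|$ (Sect.~\ref{sec-proof-main}); the multiplicative structure of denominators $q_{N_k^{(1)}}(\alpha)\sim q_{N_{k-1}^{(2)}}(\alpha)\cdot q_{t_k}(\alpha_1)$, coming from Lemma \ref{a-a-q-n} and the fact that $G^{N_{k-1}^{(2)}}(\alpha)$ begins with $t_k$ ones; and bounded covariation of band lengths (Theorem \ref{bco}), which yields $|B_{x|_{N_k^{(1)}}}|/|B_{x|_{N_{k-1}^{(2)}}}|\sim_{\lambda,M}|B_{\tilde x|_{t_k}}^{(\alpha_1)}|$ for a suitably matched $\tilde x\in\Omega^{(\alpha_1)}$. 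After taking logarithms, the dimension ratio
\[
\frac{\log\mu_\alpha([x|_{N_k^{(1)}}]^\alpha)}{\log|B_{x|_{N_k^{(1)}}}|}
\]
differs from $\dfrac{\log q_{t_k}(\alpha_1)}{-\log|B_{\tilde x|_{t_k}}^{(\alpha_1)}|}$ by a term that is negligible once $t_k$ is chosen much larger than $N_{k-1}^{(2)}$.

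To make this analysis almost sure I would proceed as follows. By \cite{Q} the local dimension of $\mu_{\alpha_1}$ equals $d(1,\lambda)$ almost everywhere, so Egorov gives a compact $E_1\subset\Omega^{(\alpha_1)}$ with $\mu_{\alpha_1}(E_1)>1/2$ and uniform convergence of the dimension ratio on $E_1$; define $E_2\subset\Omega^{(\alpha_2)}$ analogously. Using the bi-Lipschitz comparison between cylinders in $\Omega^{(\alpha)}$ and descendants in $\Omega^{(\alpha_1)}$ (respectively $\Omega^{(\alpha_2)}$) at scales inside a pure-1 (respectively pure-2) block, together with the uniform comparability \eqref{uniform-control}, I would show that for each $k$ the conditional measure of $\mu_\alpha$ on the $t_k$-descendants of any fixed $N_{k-1}^{(2)}$-cylinder is comparable, uniformly in $k$ and the prefix, to $\mu_{\alpha_1}$ restricted to a corresponding block of $\Omega^{(\alpha_1)}$. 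Consequently the event ``$x$ meets $E_1$ in its $k$-th 1-block'' has $\mu_\alpha$-probability bounded below by an absolute constant; by independence-up-to-bounded-constants of different blocks (again via Gibbs-like and bounded covariation), Borel--Cantelli produces a $\mu_\alpha$-full measure set on which this happens for infinitely many $k$. Along that subsequence the ratio at scale $N_k^{(1)}$ tends to $d(1,\lambda)$, yielding $\underline{d}_{\mu_\alpha}(x)\le d(1,\lambda)$. The symmetric argument at scales $N_k^{(2)}$ with $E_2$ yields $\overline{d}_{\mu_\alpha}(x)\ge d(2,\lambda)$.

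The main obstacle is this quantitative decoupling: making precise that inside each long pure block the Gibbs-like measure $\mu_\alpha$ behaves, uniformly in the tail and initial conditions, like the constant-type Gibbs-like measure, so that $\mu_{\alpha_\kappa}$-generic events pull back to $\mu_\alpha$-generic events on each block. The required uniformity is exactly what the constants in \eqref{uniform-control} and Theorem \ref{bco} provide (they depend only on $\lambda$ and $M=2$), but assembling the block-wise independence to run Borel--Cantelli will require a careful diagonal choice of the sequence $(t_k,\tau_k)$ and some bookkeeping to ensure the exceptional sets from successive blocks are summable. Once this is done, Theorem \ref{main}(i) converts the two almost-sure local-dimension inequalities into $d_H(\alpha,\lambda)<d_P(\alpha,\lambda)$, completing the proof.
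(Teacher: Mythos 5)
Your overall strategy is the same as the paper's: construct $\alpha=[0;1^{t_1}2^{\tau_1}1^{t_2}2^{\tau_2}\cdots]$ with rapidly growing blocks so that the last block dominates the dimension ratio, use $\mu_\alpha([w]^\alpha)\sim 1/q_n(\alpha)$, the multiplicative structure of $q_n$ under shifts (Lemma \ref{a-a-q-n}), bounded covariation of band lengths (Theorem \ref{bco}) to transfer ratios between $\Omega^{(\alpha)}$ and $\Omega^{(\alpha_\kappa)}$, and the exact upper/lower dimensionality to upgrade a positive-measure conclusion to $d_H<d_P$. Where you diverge — and where there is a real gap — is the construction of the positive-measure set of points with two-scale behavior.

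You propose: fix $E_1\subset\Omega^{(\alpha_1)}$, $E_2\subset\Omega^{(\alpha_2)}$ via Egorov with measure $>1/2$ (so the dimension ratios converge uniformly on them), argue that ``the $k$-th pure block hits $E_\kappa$'' has $\mu_\alpha$-probability bounded below by an absolute constant, and then invoke Borel--Cantelli via ``independence-up-to-bounded-constants'' to get a full-measure set where this happens infinitely often. The gap is exactly what you flag: the events across different blocks are not independent, and a lower bound on each marginal probability does not give a second-Borel--Cantelli conclusion. You would need the conditional version — a uniform lower bound on $\mu_\alpha(\text{$k$-th block good}\mid \mathcal{F}_{k-1})$ plus a martingale/L\'evy $0$--$1$ argument — and you would then still have to intersect the two ``infinitely often'' sets (for $E_1$ and $E_2$). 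This can be made to work but is delicate, and the uniformity you get from Theorem \ref{bco} gives comparability of conditional measures only up to a fixed multiplicative constant $\eta$, so a ``measure $>1/2$'' Egorov set does not automatically give conditional probability $>1/2$; you need the Egorov set large enough that $\eta^{-2}\mu_{\alpha_\kappa}(E_\kappa)$ stays bounded away from $0$, which you do get, but it's not automatic.

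The paper sidesteps this entirely. Instead of a fixed Egorov set, it defines, for each $n$, the set $F_\kappa(n,\epsilon_n)$ of ``good'' $n$-cylinders with $\epsilon_n=2^{-n}$, and shows (via ergodicity of $\nu_\kappa$ and the Gibbs bounds \eqref{kappa-case}, \eqref{8.2}) that the proportion of good descendants of any prefix is at least $1-C^{-2}2^{-n}$ once $n\ge l_n$. It then builds a Cantor set $\mathscr{C}$ by retaining \emph{only} good prefixes at every stage; the conditional probability of surviving the $n$-th stage is $\ge 1-2^{-n}$ (uniformly over histories, by the Gibbs-like bound), so $\mu_\alpha(\mathscr{C})\ge \mu_\alpha(\mathscr{C}_1)\prod_{n\ge2}(1-2^{-n})^2>0$ by a direct product argument — no Borel--Cantelli and no independence needed. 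Moreover, \emph{every} $x\in\mathscr{C}$ is good at \emph{every} block, so both the $\underline{d}$ and $\overline{d}$ estimates hold simultaneously on $\mathscr{C}$, avoiding the intersection issue. If you rework your construction so that the Egorov-type control tightens with $k$ (i.e., use a sequence of sets $E_\kappa^{(k)}$ with $\mu_{\alpha_\kappa}(E_\kappa^{(k)})\to1$ fast), the product argument goes through and your proof closes; as written, the Borel--Cantelli step is a genuine unfinished obstacle, as you correctly identify.
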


\begin{proof}
  By Lemma \ref{a-a-q-n} (ii), for any $\beta\in\mathscr{B}_2,$
\begin{equation}\label{8.1}
q_n(\beta)\le 3^n.
\end{equation}
Define $\Phi^\beta$ by \eqref{def-Phi} and assume $\mu_\beta$ is a Gibbs-like measure of $\Phi^\beta.$ By \eqref{uniform-control}, there exists an absolute constant $C>1$   such that for any $w\in \Omega^{(\beta)}_{n},$
\begin{equation}\label{8.2}
\frac{1}{Cq_n(\beta)}\le \mu_\beta([w]^\beta) \le \frac{C}{q_n(\beta)}.
\end{equation}

 By Remark \ref{rem2.4}, $\Omega_{\alpha_\kappa}$ is a subshift of finite type with alphabet $\A_\kappa$ and incidence matrix $A_{\kappa\kappa}$ for $\kappa=1,2.$  Define $\Psi^\kappa=\{\psi^\kappa_n: n\ge 1\}$ on $\Omega_{\alpha_\kappa}$  by 
 $$
 \psi^\kappa_n(x):=\ln |B_{\varpi^{x_1}x|_n}|,
 $$
 where $\varpi^{x_1}\in \Omega^{(\alpha_\kappa)}_5$ is some fixed word such that $\varpi^{x_1}x_1$ admissible (see \cite{Q} (4.3)).  
 By \cite{Q} (Theorem 10 and eq. (5.7)), there exist an ergodic measure $\nu_{\kappa}$ supported on $\Omega_{\alpha_\kappa}$ and a constant $C_\kappa>1$ such that (warn  that in \cite{Q}, $\alpha_\kappa$ is defined as $\alpha_\kappa:=[\kappa;\kappa,\kappa,\cdots]$ )
 \begin{equation}\label{kappa-case}
 \dim_H \nu_{\kappa}=\frac{\log \alpha_\kappa}{\Psi_\ast^\kappa (\nu_{\kappa})}=:d(\kappa,\lambda)\  \text{ and }\  C_\kappa^{-1} \alpha_\kappa^{|w|}\le  \nu_{\kappa}([w]_{\alpha_\kappa})\le C_\kappa \alpha_\kappa^{|w|},
 \end{equation}
 where $\Psi_\ast^\kappa (\nu_{\kappa}):=\lim_{n\to\infty} \frac{1}{n}\int_{\Omega_{\alpha_\kappa}}\psi^\kappa_n d\nu_\kappa$.
 
 By \cite{Q} Proposition 6 and Remark 7, there exists $\lambda_0>20$ such that 
 $
 d(1,\lambda)<d(2,\lambda)
 $ 
 when $\lambda\ge \lambda_0$.
Since $\nu_{\kappa}$ is   ergodic, for $ \nu_{\kappa}$-a.e. $x\in \Omega_{\alpha_\kappa}$, 
 \begin{equation}\label{8.3}
\frac{\psi_n^\kappa(x)}{n}\to \Psi_\ast^\kappa( \nu_{\kappa}).
\end{equation}

Fix $\kappa\in \{1,2\}$, $n\in \N$ and $\epsilon>0$, define 
\begin{equation}\label{8.4}
F_\kappa(n,\epsilon)=\{w\in \Omega_{\alpha_\kappa,n}: \left|\frac{\psi_n^\kappa(x)}{n}- \Psi_\ast^\kappa( \nu_{\kappa})\right|\le \epsilon \text{ for any } x\in [w]_{\alpha_\kappa}  \}.
\end{equation}
Since $\psi_n^\kappa(x)=\psi_n^\kappa(y)$ if $x|_n=y|_n,$  we can replace ``any"
by ``some" in the definition above.
For each $e\in \A_{\kappa}$, define 
$$
F_\kappa(e,n,\epsilon):= F_\kappa(n,\epsilon)\cap \Xi_{e,n-1}(\alpha_\kappa).
$$

\noindent {\bf Claim:} For any $m\in\N$, there exists $l_m\in \N$ such that for any $\kappa\in\{1,2\}$, any $e\in \A_{\kappa}$ and any $n\ge l_m,$
\begin{equation*}
\frac{\#F_\kappa(e,n,2^{-m})}{\#\Xi_{e,n-1}(\alpha_\kappa)}\ge 1-C^{-2}2^{-m}.
\end{equation*}

\noindent $\lhd$
Fix any $e\in \A_{\kappa}$. By \eqref{8.3}, $\nu_{\kappa}(A_n(e))\to \nu_{\kappa}([e]_{\alpha_\kappa})$ as $n\to\infty$, where 
$$
A_n(e):=\bigcup_{w\in F_\kappa(e,n,2^{-m})}[w]_{\alpha_\kappa}.
$$
Write $\eta_m:=C_\kappa^{-2}C^{-2}2^{-m}$. Assume $l_m(e)\in \N$ is such that for any $n\ge l_m(e)$,
\begin{equation}\label{eq-A-n-e}
\nu_{\kappa}(A_n(e))\ge (1-\eta_m)\nu_{\kappa}([e]_{\alpha_\kappa}).
\end{equation}
Define $l_m:= \max\{l_m(e): e\in \A_{1}\cup\A_2\}$. Fix any $e\in \A_{\kappa}$ and  $n\ge l_m$, by \eqref{kappa-case}, 
\begin{eqnarray*}
\nu_{\kappa}([e]_{\alpha_\kappa}\setminus A_n(e))\ge (\#\Xi_{e,n-1}(\alpha_\kappa)-\#F_\kappa(e,n,2^{-m}))C_\kappa^{-1}\alpha_\kappa^{n}.
\end{eqnarray*}
By \eqref{eq-A-n-e} and \eqref{kappa-case},  we have
\begin{eqnarray*}
\nu_{\kappa}([e]_{\alpha_\kappa}\setminus A_n(e))\le \eta_m\ \nu_{\kappa}([e]_{\alpha_\kappa})\le \eta_m\#\Xi_{e,n-1}(\alpha_\kappa)C_\kappa\alpha_\kappa^{n}.
\end{eqnarray*}
Consequently, we have 
\begin{equation*}
\frac{\#\Xi_{e,n-1}(\alpha_\kappa)-\#F_\kappa(e,n,2^{-m})}{\#\Xi_{e,n-1}(\alpha_\kappa)}\le C^{-2}2^{-m},
\end{equation*}
which implies the claim.
\hfill $\rhd$

For any $\beta\in \mathscr{B}_2$, define $\Psi^\beta$ by \eqref{def-Psi}. By \eqref{band-length}, for any $x\in \Omega^{(\beta)},$
\begin{equation}\label{est-psi-n-1}
2\log 2+ n\log c\le \psi_n^\beta(x)\le 2\log 2-n\log 2,
\end{equation}
where $0<c<1$ only depends on $\lambda$.
By \cite{Q} (4.4), there exists constant $d(\lambda)>0$ such that for any   $ \kappa\in \{1,2\}$, $n\in\N$ and $y\in \Omega_{\alpha_\kappa},$
\begin{equation}\label{est-psi-n-2}
-d(\lambda)(n+6)\le  \psi_n^\kappa(y)\le -n\log 2.
\end{equation}

   Now we define two sequences $\{t_n: n\ge 1\}$ and $\{\tau_n: n\ge 1\}$ inductively as follows.  Define $t_1=\tau_1:=l_1.$
Assume $t_1,\cdots,t_{n-1}, \tau_1,\cdots, \tau_{n-1}$ have been defined. Write $\widehat T_{n-1}:=\sum_{j=1}^{n-1}(t_j+\tau_j)$ with convention $\widehat T_0=0$ and choose  $t_n\ge l_n$ such that
\begin{equation}\label{8.5}
\begin{cases}
\widehat T_{n-1}\log 3+\log q_{t_n}(\alpha_1)\le (1+\frac{1}{n})\log q_{t_n}(\alpha_1),\\
|\psi^\beta_{\widehat T_{n-1}}|_{\max}+ \psi_{t_n}^1(x)\le (1-\frac{1}{n})\psi_{t_n}^1(x), \ (\forall \beta\in \mathscr{B}_2,\ \forall x\in \Omega_{\alpha_1}).
\end{cases}
\end{equation}
 By \eqref{est-psi-n-1}, \eqref{est-psi-n-2} and the fact that $q_n(\alpha_1)\sim \alpha_1^{-n}$, such $t_n$ exists.
 Write $T_n:=\widehat T_{n-1}+ t_n$ and choose $\tau_n\ge l_n$ such that
\begin{equation}\label{8.6}
-|\psi^\beta_{T_{n}}|_{\min}+ \psi^2_{\tau_n}(y)\ge (1+\frac{1}{n})\psi^2_{\tau_n}(y), \ (\forall \beta\in \mathscr{B}_2,\ \forall y\in \Omega_{\alpha_2}).
\end{equation}
 By \eqref{est-psi-n-1} and \eqref{est-psi-n-2}, such $\tau_n$ exists.

Define $\alpha:=[0;a_1,a_2,\cdots]\in \mathscr{B}_2$ with $a_1a_2\cdots=1^{t_1}2^{\tau_1}1^{t_2}2^{\tau_2}\cdots$. Assume $\mu_\alpha$ is a Gibbs-like measure of $\Phi^\alpha. $ Let us show that 
$$
\dim_H\mu_\alpha\le d(1,\lambda)\ \ \ \text{ and }\ \ \  \dim_P\mu_\alpha\ge d(2,\lambda).
$$

Recall that $\mathcal{A}_1=\{(I,1)_1,(I,2)_1,II_1,(III,1)_1\}=\{e_{1,1},e_{1,2},e_{1,3},e_{1,4}\}$.
Define a Cantor subset $\mathscr{C}$ of $\Omega^{(\alpha)}$ as follows. Write $\epsilon_m=2^{-m}$ and define 
$$
\begin{cases}
W_1&:=\{I u : u\in F_1(e_{1,3},t_1,\epsilon_1)\},\\
\widehat W_1&:=\{uv\in\Omega^{(\alpha)}_{\widehat T_1}: u\in W_1, \  v\in F_2(\tau_1,\epsilon_1)\}.
\end{cases}
$$
Assume $W_{n-1}, \widehat W_{n-1}$ have been defined, define $W_n,\widehat W_n$ as
$$
\begin{cases}
W_n&:=\{uv\in \Omega^{(\alpha)}_{T_n}: u\in \widehat W_{n-1}, \   v\in F_1(t_n,\epsilon_n)\},\\
\widehat W_n&:=\{uv\in\Omega^{(\alpha)}_{\widehat T_n}: u\in W_n,\ v\in F_2(\tau_n,\epsilon_n)\}.
\end{cases}
$$
Define 
$
\mathscr{C}_n:=\bigcup_{w\in W_n} [w]^\alpha$ and $\widehat{\mathscr{C}}_n:=\bigcup_{w\in \widehat W_n} [w]^\alpha.
$
It is seen that $\mathscr{C}_{n+1}\subset \widehat{\mathscr{C}}_n\subset \mathscr{C}_n$. Define 
$
\mathscr{C}:=\lim_{n\to\infty}\mathscr{C}_n.
$
 Let us show that  $\mu_\alpha(\mathscr{C})>0.$
 
  At first,  $W_1$ is nonempty by the Claim. Then by \eqref{8.2}, 
 $
 \mu_\alpha (\mathscr{C}_1)>0.
 $
 Next, we compare $ \mu_\alpha (\widehat{\mathscr{C}}_1)$ and $ \mu_\alpha (\mathscr{C}_1).$ By \eqref{8.2} and the Claim, we have 
 \begin{eqnarray*}
 \mu_\alpha(\mathscr{C}_1\setminus \widehat{\mathscr{C}}_1) &=& \sum_{w\in W_1} \sum_{ e\in \A_{2}\atop w\to e }\sum_{u\in \Xi_{e,\tau_1-1}(\alpha_2)\setminus F_2(e,\tau_1,\epsilon_1)}\mu_\alpha([wu]^\alpha) \\
  &\le& \sum_{w\in W_1} \sum_{ e\in \A_{2}\atop w\to e }\frac{C(\#\Xi_{e,\tau_1-1}(\alpha_2)-\#F_2(e,\tau_1,\epsilon_1))}{q_{\widehat T_1}(\alpha)}\\
  &\le& C^{-1}2^{-1}\sum_{w\in W_1} \sum_{ e\in \A_{2}\atop w\to e }\frac{\#\Xi_{e,\tau_1-1}(\alpha_2)}{q_{\widehat T_1}(\alpha)}\\
&\le&2^{-1}  \sum_{w\in W_1} \sum_{ e\in \A_{2}\atop w\to e }\sum_{u\in \Xi_{e,\tau_1-1}(\alpha_2)}\mu_\alpha([wu]^\alpha)=2^{-1}\mu_\alpha(\mathscr{C}_1).
  \end{eqnarray*}
  Or equivalently, we have $\mu_\alpha(\widehat{\mathscr{C}}_1)\ge (1-2^{-1})\mu_\alpha(\mathscr{C}_1).$ By essentially the same proof as above, we can show that for any $n\ge2,$
  $$
  \mu_\alpha({\mathscr{C}}_n)\ge (1-2^{-n})\mu_\alpha(\widehat{\mathscr{C}}_{n-1})\ \  \ \text{ and }\ \ \ \mu_\alpha(\widehat{\mathscr{C}}_n)\ge (1-2^{-n})\mu_\alpha(\mathscr{C}_n).
  $$
  As  a consequence, we get 
  $$
  \mu_\alpha(\mathscr{C})\ge\mu_\alpha(\mathscr{C}_1) (1-2^{-1})\prod_{n\ge 2}(1-2^{-n})^2>0.
  $$

Take any $x\in \mathscr{C}$ and write $x|_{\widehat T_n}=Iu_1v_1\cdots u_{n}v_{n}$ such that  $|u_i|=t_i$ and $|v_i|=\tau_i.$ Then  $u_i\in F_1(t_i,\epsilon_i)$, $v_i\in F_2(\tau_i,\epsilon_i)$ and   $x|_{ T_n}=x|_{\widehat T_{n-1}}u_n$,  $x|_{\widehat T_n}=x|_{T_n}v_n.$ By the definition of $\alpha$, 
$G^{\widehat T_{n-1}}(\alpha)=[0;\underbrace{1,\cdots,1}_{t_n},\cdots], $ hence $q_{t_n}(G^{\widehat T_{n-1}}\alpha)=q_{t_n}(\alpha_1).$  By \eqref{8.2}, Lemma \ref{a-a-q-n} and \eqref{8.1}, we have 
 \begin{eqnarray*}
 \mu_\alpha([x|_{T_n}]^\alpha)\sim \frac{1}{ q_{T_n}(\alpha)} \sim \frac{1}{q_{\widehat T_{n-1}}(\alpha)q_{t_n}(G^{\widehat T_{n-1}}\alpha)}\ge \frac{1}{3^{\widehat T_{n-1}} q_{t_n}(\alpha_1)}.
\end{eqnarray*}
 By Lemma \ref{lem-weak-gibbs} (ii), Lemma \ref{bdvar-Psi} and \eqref{def-Psi}, 
\begin{eqnarray*}
&&{\rm diam} ([x|_{T_n}]^\alpha)\sim_\lambda\  \exp(\psi_{T_n}^\alpha(x))=|B_{x|_{T_n}}|=\frac{|B_{x|_{T_n}}|}{|B_{x|_{\widehat T_{n-1}}}|}\cdot |B_{x|_{\widehat T_{n-1}}}|\\
&=& \frac{|B_{x|_{\widehat T_{n-1}}u_n}|}{|B_{x|_{\widehat T_{n-1}}}|}\cdot\exp(\psi_{\widehat T_{n-1}}^\alpha(x)).
\end{eqnarray*}

Now take any $y^{n}\in [u_n]_{\alpha_1},$ we have 
\begin{eqnarray*}
\exp(\psi_{t_n}^1(y^{n}))&=&|B_{\varpi^{y^n_1}y^{n}|_{t_n}}|=|B_{\varpi^{y^n_1}u_n}|=\frac{|B_{\varpi^{y^n_1}u_n}|}{|B_{\varpi^{y^n_1}}|}|B_{\varpi^{y^n_1}}|.
\end{eqnarray*}
Since $\varpi^{y^n_1}\in \Omega_5^{(\alpha_1)},$ it  only has finitely many choices, combine with Theorem \ref{bco}, we have 
$$
{\rm diam} ([x|_{T_n}]^\alpha)\sim_\lambda\ \exp(\psi_{t_n}^1(y^n))\exp(\psi_{\widehat T_{n-1}}^\alpha(x)).
$$
Recall that $u_n\in F_1(t_n,\epsilon_n)$, thus  by \eqref{8.5}, \eqref{8.4}, \eqref{kappa-case} and the fact that $q_n(\alpha_1)\sim \alpha_1^{-n}$, we have 
\begin{eqnarray*}
\underline{d}_{\mu_\alpha}(x)
&\le &\liminf_{n\to\infty}\frac{\log\mu_\alpha([x|_{T_n}]^\alpha)}{\log {\rm diam} ([x|_{T_n}]^\alpha)}\\
&\le &\liminf_{n\to\infty}\frac{O(1)+\widehat T_{n-1}\log 3+\log q_{t_n}(\alpha_1)}{-\left(O(1)+\psi_{\widehat T_{n-1}}^\alpha(x)+ \psi_{t_n}^1(y^n)\right)}\\
&\le &\liminf_{n\to\infty}\frac{ (1+\frac{1}{n})\log q_{t_n}(\alpha_1)}{-(1-\frac{1}{n})\psi_{t_n}^1(y^n)}=d(1,\lambda).
\end{eqnarray*}

On the other hand, notice that $G^{T_{n}}(\alpha)=[0;\underbrace{2,\cdots,2}_{\tau_n},\cdots], $ thus $q_{\tau_n}(G^{T_n}(\alpha))=q_{\tau_n}(\alpha_2)$. Hence by \eqref{8.2} and  Lemma \ref{a-a-q-n}, we have 
 \begin{eqnarray*}
 \mu_\alpha([x|_{\widehat T_n}]^\alpha)\sim \frac{1}{q_{\widehat T_n}(\alpha)} \sim \frac{1}{q_{T_{n}}(\alpha)q_{\tau_n}(G^{T_n}(\alpha))}\le \frac{1}{ q_{\tau_n}(\alpha_2)}.
\end{eqnarray*}

Take any $z^{n}\in [v_n]_{\alpha_2},$ by similar argument as above, we also have 
\begin{eqnarray*}
{\rm diam} ([x|_{\widehat T_n}]^\alpha)\sim_\lambda\ \exp(\psi_{\tau_n}^2(z^{n}))\exp(\psi_{ T_{n}}^\alpha(x)).
\end{eqnarray*}
 Recall that $v_n\in F_2(\tau_n,\epsilon_n)$, thus  by \eqref{8.6}, \eqref{8.4}, \eqref{kappa-case} and the fact that $q_n(\alpha_2)\sim \alpha_2^{-n}$, we have
\begin{eqnarray*}
\overline{d}_{\mu_\alpha}(x)
&\ge &\limsup_{n\to\infty}\frac{\log\mu_\alpha([x|_{\widehat T_n}]^\alpha)}{\log {\rm diam} ([x|_{\widehat T_n}]^\alpha)}\\
&\ge &\limsup_{n\to\infty}\frac{O(1)+\log q_{\tau_n}(\alpha_2)}{-\left(O(1)+\psi_{ T_{n}}^\alpha(x)+ \psi_{\tau_n}^2(z^{n})\right)}\\
&\ge &\limsup_{n\to\infty}\frac{ \log q_{\tau_n}(\alpha_2)}{-(1+\frac{1}{n})\psi_{\tau_n}^2(z^{n})}=d(2,\lambda).
\end{eqnarray*}
As a result, for any $x\in \mathscr{C},$ we have 
$$
\underline{d}_{\mu_\alpha}(x)\le d(1,\lambda) \ \ \ \text{ and }\ \ \ \overline{d}_{\mu_\alpha}(x)\ge d(2,\lambda).
$$
 Since $\mu_\alpha(\mathscr{C})>0$ and $\mu_\alpha$ is exact upper and lower dimensional by Theorem \ref{exact-hp-dim},  we conclude that 
 $$
 \dim_H\mu_\alpha\le d(1,\lambda)<d(2,\lambda)\le \dim_P\mu_\alpha.
 $$
 Now by   Proposition \ref{strong-equiv}, we have 
$
(\pi_\alpha)_\ast(\mu_\alpha) \asymp \NN_{\alpha,\lambda}.
$
Thus   
$$d_H(\alpha,\lambda)=\dim_H\mu_\alpha<\dim_P\mu_\alpha=d_P(\alpha,\lambda).
$$
and the result follows.
 \end{proof}
 
 \noindent {\bf Proof of Theorem \ref{main} (ii).}\ 
 For $\alpha$ constructed in Proposition \ref{example-not-eq},  $\NN_{\alpha,\lambda}$ is not exact-dimensional, since  its Hausdorff and packing dimensions do not coincide.
 \hfill $\Box$

 \smallskip

\noindent{\bf Acknowledgement}. The author 
 was supported by the National Natural Science Foundation of China,  No. 11371055 and No. 11431007.


 

\begin{thebibliography}{30}

\bibitem{AS} J.-P. Allouche and J. Shallit, {\it Automatic sequences. Theory, applications, generalizations.}  Cambridge University Press, Cambridge, 2003.

\bibitem{BQ} J. Barral and Y.-H. Qu,  \emph{  On the higher-dimensional multifractal analysis},  Discrete Contin. Dyn. Syst. 32 (2012), no. 6, 1977-1995.

\bibitem{BIST} J. Bellissard, B. Iochum, E. Scoppola and D. Testart,
\emph{ Spectral properties of one dimensional quasi-crystals}, 
Commun. Math. Phys. {\bf 125}(1989), 527-543.

\bibitem{CL} R. Carmona and J. Lacroix,  \emph{ Spectral theory of random Schršdinger operators. Probability and its Applications},  Birkh\"auser Boston, Inc., Boston, MA, 1990.

 \bibitem{Da}D. Damanik,  {\it Schr\"odinger Operators with dynamically defined potentials.}  Ergodic Theory Dynam. Systems. online
available on CJO2016. doi:10.1017/etds.2015.120.

\bibitem{DG2} D. Damanik and  A. Gorodetski, \emph{ Spectral and quantum dynamical properties of the weakly coupled Fibonacci Hamiltonian}, Commun. Math. Phys. 305 (2011), 221-277.

\bibitem{DG3}D.  Damanik and A. Gorodetski,  \emph{ The density of states measure of the weakly coupled Fibonacci Hamiltonian}, Geom. Funct. Anal. 22 (2012), no. 4, 976-989. 

 \bibitem{DG4}D.  Damanik and A. Gorodetski,  \emph{H\"older continuity of the integrated density of states for the Fibonacci Hamiltonian},  Comm. Math. Phys. 323 (2013), no. 2, 497-515.
 
\bibitem{DGY} D. Damanik, A. Gorodetski, W. Yessen, \emph{ The Fibonacci Hamiltonian},   Inventiones mathematicae.  online available,  DOI: 10.1007/s00222-016-0660-x.

\bibitem{FLW} S. Fan, Q.H. Liu, Z.Y. Wen,
\emph{ Gibbs like measure for spectrum of a class of quasi-crystals},
 Ergodic Theory Dynam. Systems, {\bf 31}(2011), 1669-1695.

 \bibitem{GP97}
D. Gatzouras and Y.  Peres, {\em Invariant measures of full
dimension for some expanding maps},   Ergod. Th. $\&$ Dynam. Sys.,
{\bf 17} (1997),  147--167.

\bibitem{Gi} A. Girand, \emph{
Dynamical Green Functions and Discrete Schr\"odinger Operators with Potentials Generated by Primitive Invertible Substitution},
 Nonlinearity,  27 (2014) 527-543. 

 \bibitem{JZ} S. Jitomirskaya, S. W.  Zhang, 
   \emph{ Quantitative continuity of singular continuous spectral measures and arithmetic criteria for quasiperiodic Schr\"odinger operators},  arXiv:1510.07086.
    
\bibitem{KS04} M. Kesseb\"ohmer and  B. Stratman,
{\em A multifractal formalism for growth rates and applications to
geometrically finite Kleinian groups},  Ergod. Th. $\&$ Dynam. Sys.,
{\bf 24} (2004), 141-170.

\bibitem{KKT} M. Kohmoto, L. P. Kadanoff, C. Tang, {\it Localization problem in one dimension: mapping
and escape},  Phys. Rev. Lett. {\bf 50} (1983), 1870-1872.

\bibitem{LPW07}Q.H. Liu, J. Peyri\`ere and Z.Y. Wen,
\emph{ Dimension of the spectrum of one-dimensional discrete
Schr\"odinger operators with Sturmian potentials},
 Comptes Randus Mathematique, { 345:12}(2007), 667-672.

\bibitem{LQW} Q. H.  Liu, Y. H.  Qu, Z. Y.  Wen,  \emph{ The fractal dimensions of the spectrum of Sturm Hamiltonian},  Adv. Math. 257 (2014), 285-336.
 
\bibitem{LW} Q.H. Liu, Z.Y. Wen,
\emph{ Hausdorff dimension of spectrum of one-dimensional
Schr\"odinger operator with Sturmian potentials},
{ Potential Analysis} {\bf 20:1}(2004), 33--59.

\bibitem{LW05} Q.H. Liu, Z.Y. Wen,
\emph{ On dimensions of multitype Moran sets},
Math. Proc. Camb. Phyl. Soc.  139:3(2005), 541--553.

\bibitem{Lo}Lothaire, M.  {\it Algebraic combinatorics on words.}    Encyclopedia of Mathematics and its Applications, 90. Cambridge University Press, Cambridge, 2002.

\bibitem{MRW} J.-H. Ma,  H. Rao and Z.-Y.  Wen,   {\it Dimensions of cookie-cutter-like sets}. Sci. China Ser. A 44 (2001), no. 11, 1400-1412.

\bibitem{M} Mattila P., {\it Geometry of sets and measures in Euclidean spaces. Fractals and rectifiability}. Cambridge Studies in Advanced Mathematics, 44. Cambridge University Press, Cambridge, 1995.

\bibitem{Mei} May Mei, 
  \emph{  Spectra of Discrete Schr\"odinger Operators with Primitive Invertible Substitution Potentials}, 
   J. Math. Phys. 55 (2014), no. 8, 082701,  22 pp.

\bibitem{Mu} P. Munger,
  \emph{  Frequency dependence of H\"older continuity for quasiperiodic Schr\"odinger operators, } (2013), arXiv:1310.8553.   

\bibitem{OPRSS} S. Ostlund, R. Pandit, D. Rand, H. Schellnhuber, E. Siggia, \emph{ One-dimensional Schr\"odinger equation with an almost periodic potential}, {  Phys. Rev. Lett.} 50 (1983), 1873-1877.

 \bibitem{P} M. Pollicott,  \emph{Analyticity of dimensions for hyperbolic surface diffeomorphisms},  Proc. Amer. Math. Soc. 143 (2015), no. 8, 3465-3474.
 
 \bibitem{Q} Y.-H.  Qu, {\em The spectral properties of the strongly coupled Sturm Hamiltonian of eventually constant type},  Annales Henri Poincar\'e, 17 (2016), 2475-2511. 
 
\bibitem{R} L. Raymond,
{\em A constructive gap labelling for the discrete schr\"odinger
operater on a quasiperiodic chain}.(Preprint,1997)

\bibitem{T} M. Toda,
{\em Theory of Nonlinear Lattices},
Number 20 in Solid-State Sciences,
Springer-Verlag, second enlarged edition, 1989. Chap. 4.
 
 \end{thebibliography}
\end{document}